\theoremstyle{plain}
\newtheorem{lemma}{Lemma}[section]
\newtheorem{theorem}[lemma]{Theorem}
\newtheorem{proposition}[lemma]{Proposition} 
\newtheorem{corollary}[lemma]{Corollary}
\newtheorem{remark}[lemma]{Remark}
\newtheorem{definition}[lemma]{Definition}
\font\rm=cmr12
\def\Z{\mathbb Z}
\def\R{\mathbb R}
\def\d{\delta}
\def\t{\times}
\def\o{\otimes}
\def\h{\hookrightarrow}
\def\ra{\rightarrow}
\def\la{\leftarrow}
\def\mt{\mapsto}
\def\a{\alpha}
\def\z{\mathfrak z}
\def\D{\Delta}
\def\G{\Gamma}
\def\s{\sigma}
\def\SC{ \mathcal S(\mathcal C)}
\def\Der{\mathscr D}
\def\C{\mathcal }
\title[Reducibility the points beyond the ends of complementary series]
{On the reducibility points beyond the ends of complementary series of $p$-adic general linear groups}
\author{Marko Tadi\'c}
\address{Department of Mathematics, University of Zagreb
\\
Bijeni\v{c}ka 30, 10000 Zagreb,
 Croatia\\
Email: \tt tadic{\char'100}math.hr}
\keywords{non-archimedean local fields, general linear  groups, Speh representations, parabolically induced representations, reducibility, composition series, unitarizability}
\subjclass[2000]{Primary: 22E50}
\thanks{The   
author was partly supported by 
Croatian Ministry of Science, Education and Sports grant
{\#}037-0372794-2804.}
\date{\today}
\begin{document}

\begin{abstract} In this paper we  consider the reducibility points beyond the ends of complementary series    of  general linear groups over a $p$-adic field, which start  with  Speh representations. We  describe explicitly  the composition series of the representations  at these reducibility points. They  are multiplicities  one representations, and they can be of arbitrary length.   We give Langlands parameters of all the irreducible subquotients.

\end{abstract}

\maketitle

\setcounter{tocdepth}{1}



\section{Introduction}\label{intro}

Problems of reducibility of parabolically induced representations are very important  in the harmonic analysis on reductive groups over local fields (they are of particular importance for the problem of unitarizability). They are also very important for the theory of automorphic forms for number of questions.
A closely related (usually very non-trivial) problem is the determination of the composition series at the   reducibility points. The knowledge of composition series is equivalent to the corresponding character identity. In this paper we study such type of problems for general linear groups over a 
local non-archimedean field $F$.
 
  Speh representations  are key representations in the classification of unitary duals of general linear groups (see \cite{T-AENS}; for the archimedean case see \cite{T-R-C-old}). 
One directly gets all the complementary series for general linear groups  from the complementary series starting with Speh representations.
Composition series at the ends of these complementary series are crucial in determining the topology of the unitary duals.
The composition series at the ends of these complementary series played also crucial  role in obtaining explicit formula for characters of irreducible unitary representations in terms of standard characters (\cite{T-ch}). These complementary series terminate at  the first reducibility point and the representations there have length two  (when one starts with a single Speh representation). It is a natural question to ask what are the composition series at the further reducibility points.  

There can exist a significant number of reducibility points beyond the end of complementary series. In all these reducibility points we completely determine the composition series, and give the Langlands parameters of the irreducible subquotients. These representations are always multiplicity one representations (when one starts from a single Speh representation), and can be of arbitrary length. For example, if we want to get a representation of length $1000$ supported by the minimal parabolic subgroup, we shall need to start with Speh representations of $GL(999\ 000,F)$\footnote{This is the lowest rank in which we get length 1000 at some reducibility point beyond the complementary series starting with a single Speh representation.}, and consider the complementary series of $GL(1\ 998\ 000,F)$. It is interesting that it is very easy to write down the Langlands parameters of all the irreducible subquotients  (they are given by the simple formulas \eqref{first}; see  Theorem \ref{th-i}).

Now we shall describe more precisely the principal results of the paper.
Put
$$
\nu=|\det|_F,
$$
where $|\ |_F$ denotes the normalized absolute value on a local non-archimedean field $F$.
For $u,v\in\mathbb R$ such that $v-u\in\Z_{\geq0}$,  and for an irreducible cuspidal representation $\rho$ of $GL(p,F)$,
the set 
$$
[\nu^u\rho,\nu^v\rho]=\{\nu^{u}\rho, \nu^{u+1}\rho,\dots,\nu^{v-1}\rho,\nu^{v}\rho\}
$$
 is called a segment in cuspidal representations (of general linear groups). The representation  $\nu^u\rho$  
 is denoted by $b([\nu^u\rho,\nu^v\rho])$, and 
 called the beginning  
 of the segment $[\nu^u\rho,\nu^v\rho]$. We say that such a segment $\D_1$ precedes another segment $\D_2$, and write
 $$
 \D_1\ra \D_2,
 $$
 if $\D_1\cup \D_2$ is a segment different from $\D_1$ and $\D_2$, and if the beginnings of $\D_1$ and $\D_1\cup \D_2$ are the same.
 
Let  $\D=
[\nu^u\rho,\nu^v\rho]$ be a segment in cuspidal representations. For $z\in \mathbb R$, we denote 
 $$
 \nu^z\D=\{\nu^z\rho';\rho'\in \D\}.
 $$

 Consider the representation
$$
\text{Ind}^{GL((v-u+1)p,F)}(\nu^v\rho\o\nu^{v-1}\rho\o\dots\o\nu^u\rho),
$$
parabolically induced from the appropriate  parabolic subgroup containing regular upper triangular matrices (see the second section).
Then the above representation has a unique irreducible subrepresentation. This sub representation  is essentially square integrable. It is  denoted by
$$
\d(\D).
$$
Let $a=(\D_1,\dots,\D_k)$ be a finite  multiset of segments  in cuspidal representations. Write $\D_i=
[\nu^{u_i}\rho_i,\nu^{v_i}\rho_i]$, where $\d(\D_i)$ is a representation of $GL(n_i,F)$ and $\rho_i$ are unitarizable irreducible cuspidal representations. Take a permutation $\s$ of $\{1,\dots,k\}$ such that
\begin{equation}
\label{ord}
u_{\s(1)}+v_{\s(1)}\geq \dots\geq u_{\s(k)}+v_{\s(k)}.
\end{equation}
Then the representation
$$
\text{Ind}^{GL(n_1+\dots+n_k,F)}(\d(\D_{\s(1)})\o\dots\o\d(\D_{\s(k)})),
$$
parabolically induced from the appropriate  parabolic subgroup containing regular upper triangular matrices, has a unique irreducible quotient (whose equivalence class does not depend on the permutation $\s$ which satisfy the above condition\footnote{The multiset does not change if we make a permutation of  elements in it. Nevertheless, when we  define multisets, we  always fix some ordering on the segments that determine  it (often in this paper completly opposite to \eqref{ord})}). We denote it by 
$$
L(a).
$$
Then attaching $a\mapsto L(a)$ is one possible description of the Langlands classification of the non-unitary duals of groups $GL(n,F)$'s (by multisets of segments  in cuspidal representations). We shall use this version of  Langlands classification for general linear groups in this paper. 

We add finite  multisets of segments in obvious way: 
$$
a_1+a_2=(\D_1^{(1)},\dots,\D_{k^{(1)}}^{(1)},\D_1^{(2)},\dots,\D_{k^{(2)}}^{(2)}),
$$
 where
$a_i=(\D_1^{(i)},\dots,\D_{k^{(i)}}^{(i)})$, $i=1,2$.

Let $\D=[\nu^u\rho,\nu^v\rho]$ be a segment in cuspidal representations such that $\rho$ is unitarizable and $u+v=0$. Fix $n\in\Z_{\geq 1}$. Then the representation
$$
u(\d(\D),n):=L(\nu^{(n-1)/2}\D,\nu^{(n-1)/2-1}\D,\dots, \nu^{-(n-1)/2}\D)
$$
is called a Speh representation. Such a representation is unitarizable,  and each irreducible unitary representation of a general linear group is constructed from several  such representations in a simple way (\cite{T-AENS}). If an irreducible representation become a Speh representations after a twists   by a characters, then it will be called   essentially Speh representation. In other words, essentially Speh representations are the representations of a form  $\nu^\a u(\d(\D),n)$, with $\a\in \mathbb R$.

Let $k\in \Z$ and denote $d=\text{card}(\D)$. We shall consider representations
$$
\mathbf R^{\mathbf t}(n,d)_{(k)}^{(\rho)}=\text{Ind}(\nu^{-k/2}u(\d(\D),n)\o \nu^{k/2}u(\d(\D),n)).
$$
For $k=1$, this is the end of complementary series.
Denote 
$$
\D_i=\nu^{-k/2}(\nu^{-(n-1)/2+i-1}\D), \qquad \G_i=\nu^{k/2}(\nu^{-(n-1)/2+i-1}\D), \qquad i=1,\dots,n.
$$
Then
$$
\nu^{-k/2}u(\d(\D),n)=L(\D_1,\dots,\D_n), \qquad \nu^{k/2}u(\d(\D),n)=L(\G_1,\dots,\G_n),
$$
and thus
$$
\mathbf R^{\mathbf t}(n,d)_{(k)}^{(\rho)}=\text{Ind}(L(\D_1,\dots,\D_n) \o L(\G_1,\dots,\G_n)).
$$

\begin{definition} 
For $j=0$ and for  $1\leq j\leq n$ for which $\D_n\ra \G_j$, denote
\begin{equation}
\label{first}
r_j(n,d)_{(k)}^{(\rho)}=\sum_{i=1}^j(\D_{i+n-j}\cup \G_i, \D_{i+n-j}\cap \G_i)+\sum_{i=1}^{n-j}(\D_i,\G_{i+j}).
\end{equation}
\end{definition}

In other words,
$
r_0(n,d)^{(\rho)}_k=(\D_{1},\dots,\D_n,\G_1,\dots,\G_n)$, and for
 $1\leq j\leq n$ for which $\D_n\ra \G_j$ we get
$
r_j(n,d)^{(\rho)}_k
$
 by replacing
in
$
r_0(n,d)^{(\rho)}_k=(\D_{1},\dots,\D_n,\G_1,\dots,\G_n)
$
the part
$$
\D_{n-j+1},\dots,
\D_n,\G_1,
\dots,\G_j
$$
with
$$
\D_{n-j+1}\cup \G_1,\D_{n-j+1}\cap \G_1,
\dots\dots,
\D_{n}\cup\G_j,\D_n\cap\G_j.
$$
One gets easily that $\D_n\ra \G_j$ if and only if
$$
\max( n-k+1,1)\leq j \leq \min(n-k+d,n).
$$

\begin{theorem}
\label{th-i}
 Let $k\in\Z_{\geq 0}$.  Then:
\begin{enumerate}

\item The representation $\mathbf R^{\mathbf t}(n,d)_{(k)}^{(\rho)}$ is a multiplicity one representation. It has a unique irreducible subrepresentation and unique irreducible quotient. The irreducible subrepresentation is isomorphic to $L(r_0(n,d)_{(k)}^{(\rho)})$,
Further, $\mathbf R^{\mathbf t}(n,d)_{(k)}^{(\rho)}$ and $\mathbf R^{\mathbf t}(n,d)_{(-k)}^{(\rho)}$ have the same composition series.

\item
For 
$
 n+d\leq k
 ,
 $
 and for $k=0$,  $\mathbf R^{\mathbf t}(n,d)_{(k)}^{(\rho)}$ is irreducible. Then $\mathbf R^{\mathbf t}(n,d)_{(k)}^{(\rho)}\cong L(r_0(n,d)_{(k)}^{(\rho)})$.

\item
 For 
 $
0 < k < n+d,
$
the composition series of $\mathbf R^{\mathbf t}(n,d)_{(k)}^{(\rho)}$ consists of
$$
L(r_i(n,d)_{(k)}^{(\rho)}), \quad \max(n-k+1,1)\leq i \leq \min(n-k+d,n),
$$
together with $L(r_0(n,d)_{(k)}^{(\rho)})$.
 The irreducible quotient of $\mathbf R^{\mathbf t}(n,d)_{(k)}^{(\rho)}$ is isomorphic  to $L(r_{\min(n-k+d,n)}(n,d)_{(k)}^{(\rho)}).$

\end{enumerate}

\end{theorem}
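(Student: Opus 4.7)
My plan is to dispatch the three parts of Theorem~\ref{th-i} in sequence, with part~(3) as the main content.

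Part~(1) combines three pieces: uniqueness of socle and cosocle, identification of the socle as $L(r_0(n,d)_{(k)}^{(\rho)})$, and the $k\leftrightarrow -k$ symmetry. I would argue uniqueness of the irreducible subrepresentation by embedding
$$
\mathbf R^{\mathbf t}(n,d)_{(k)}^{(\rho)}\h \text{Ind}\bigl(\d(\D_1)\o\dots\o\d(\D_n)\o\d(\G_1)\o\dots\o\d(\G_n)\bigr),
$$
which is legitimate because each Langlands quotient $L(\D_1,\dots,\D_n)$ and $L(\G_1,\dots,\G_n)$ embeds into its own standard module (the $\D_i$'s and $\G_j$'s being internally in Langlands order). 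Given any irreducible subrepresentation $L(a)$ of $\mathbf R^{\mathbf t}(n,d)_{(k)}^{(\rho)}$, Frobenius reciprocity relates $\text{Hom}(L(a),\mathbf R^{\mathbf t}(n,d)_{(k)}^{(\rho)})$ to a Jacquet-module Hom, and the Zelevinsky algorithm applied to $L(a)$ forces $a=r_0$. The cosocle claim is dual, and the $k\leftrightarrow -k$ symmetry follows from the fact that for $GL$ parabolic induction is invariant in the Grothendieck group under permuting the inducing factors.

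Part~(2) splits into two sub-cases. For $k\ge n+d$, I would apply Zelevinsky's irreducibility criterion: a direct computation using $\G_j=\nu^{k+j-i}\D_i$ shows no pair $(\D_i,\G_j)$ with $i,j\in\{1,\dots,n\}$ is linked, so $\text{Ind}(L(\D_1,\dots,\D_n)\o L(\G_1,\dots,\G_n))$ is irreducible. For $k=0$, the representation $\mathbf R^{\mathbf t}(n,d)_{(0)}^{(\rho)}=u(\d(\D),n)\t u(\d(\D),n)$ sits strictly inside the complementary series $\nu^s u(\d(\D),n)\t\nu^{-s}u(\d(\D),n)$, whose first reducibility only occurs at $s=1/2$, so irreducibility holds. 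In both cases part~(1) identifies the irreducible representation with $L(r_0(n,d)_{(k)}^{(\rho)})$.

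For part~(3), which is the substantial content, I would proceed in three steps. First, for each $j$ in the stated range, I would exhibit $L(r_j(n,d)_{(k)}^{(\rho)})$ as a genuine subquotient by iteratively applying the two-term composition series of $\d(\D_{n-j+i})\t\d(\G_i)$ for the $j$ linked pairs inside the big standard module, producing a filtration whose graded pieces contain the claimed irreducibles with the correct Langlands data. Second, I would identify the unique irreducible quotient as $L(r_{\min(n-k+d,n)}(n,d)_{(k)}^{(\rho)})$ by noting that this multiset corresponds to absorbing every available linked pair into a union--intersection, placing it at the top of the filtration (dual to $L(r_0)$ at the bottom). Third, I would rule out any extra subquotients by computing the Jacquet module of $\mathbf R^{\mathbf t}(n,d)_{(k)}^{(\rho)}$ via the geometric lemma and the known ladder-type Jacquet modules of Speh representations $u(\d(\D),n)$, matching the semisimplification exactly against $\{L(r_0)\}\cup\{L(r_j):\max(n-k+1,1)\le j\le\min(n-k+d,n)\}$ with multiplicity one. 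This third step is the hardest obstacle: the geometric lemma produces a combinatorially large sum of contributions, and ruling out extra $L(a')$'s requires showing that all but the expected multisets either cancel or simply fail to appear as valid Langlands data; the argument hinges on the very rigid ladder structure of Speh representations together with the explicit combinatorics of unions and intersections of the shifted segments $\D_i,\G_j$.
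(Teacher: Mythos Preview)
Your strategy for part~(3) has two genuine gaps, and it diverges substantially from the paper's method.

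\textbf{Step~1 does not land in the right representation.} You propose to exhibit each $L(r_j(n,d)_{(k)}^{(\rho)})$ by manipulating the two-term composition series of $\d(\D_{n-j+i})\t\d(\G_i)$ ``inside the big standard module''. But $\mathbf R^{\mathbf t}(n,d)_{(k)}^{(\rho)}$ is a \emph{quotient} of that standard module $\lambda(\D_1,\dots,\D_n,\G_1,\dots,\G_n)$, not a subrepresentation of it; producing $L(r_j)$ as a subquotient of the standard module does not show it survives in the quotient. You would need a separate argument that the relevant filtration pieces are not killed when you pass from $\lambda(\D_1,\dots,\D_n)\t\lambda(\G_1,\dots,\G_n)$ down to $L(\D_1,\dots,\D_n)\t L(\G_1,\dots,\G_n)$, and nothing in your outline supplies this.

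\textbf{Step~3 is a sketch of a difficulty, not a proof.} You acknowledge yourself that the geometric lemma yields a combinatorially large sum and that ruling out extra $L(a')$'s is ``the hardest obstacle'', but you do not propose any mechanism for the cancellation or exclusion. Matching semisimplifications of Jacquet modules of $\mathbf R^{\mathbf t}$ against those of the candidate list is not feasible by hand here, and even if done it would not by itself give multiplicity one.

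The paper takes a completely different route, working throughout in the Zelevinsky classification with $\mathcal R(n,d)_{(k)}^{(\rho)}=Z(a_-)\t Z(a_+)$ and only translating to $\mathbf R^{\mathbf t}$ at the end via the relation $\mathbf R^{\mathbf t}(n,d)_{(k)}^{(\rho)}=\mathcal R(d,n)_{(k)}^{(\rho)}$. The engine is \emph{induction on $d$ via highest derivatives}: one has $\text{h.d.}(\mathcal R(n,d)_{(k)}^{(\rho)})=\nu^{-1/2}\mathcal R(n,d-1)_{(k)}^{(\rho)}$ and, thanks to the Lapid--M\'inguez derivative formula for ladder representations, $\text{h.d.}(Z(r_j(n,d)_{(k)}^{(\rho)}))=\nu^{-1/2}Z(r_j(n,d-1)_{(k)}^{(\rho)})$. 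Since an irreducible $Z(a)$ is determined by its highest derivative together with its cuspidal support, the inductive hypothesis on $d-1$ immediately gives both existence \emph{and} multiplicity one for all subquotients $Z(b)$ with $\text{card}(b)=2n$. The one remaining subquotient of smaller cardinality (in the disjoint-beginnings case) is pinned down by a combinatorial ``key lemma'' using the symmetry $\mathcal E(b)^+=\mathcal B(b)$ coming from Hermitian self-duality, together with the M\oe glin--Waldspurger algorithm; in the non-disjoint case the same key lemma shows no such small subquotient exists. This derivative induction replaces your geometric-lemma bookkeeping entirely and is what makes the exhaustion and multiplicity-one claims tractable.
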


Besides the composition series, in this paper we also completely determine the lattices of subrepresentations of representations $\mathbf R^{\mathbf t}(n,d)_{(k)}^{(\rho)}$.

The main tool in our handling of the composition series that we consider in this paper, are derivatives (\cite{GK}, \cite{BZ}, \cite{Z}). A very simple and nice  formula for the derivatives of Speh representations (described in 2.10) is crucial for our applications of derivatives. The formula was obtained in \cite{LMi} (conjectured much earlier in \cite{T-der-ber}). Actually, the formula of E. Lapid and A. M\'inguez is much more general - it is   for ladder representations (defined in \cite{LMi}\footnote{Let $\pi\cong L(a)$, where
$a=(\D_1,\dots,\D_k)$ is a finite multiset of segments in cuspidal representations. 
Suppose that $\D_i\not\subseteq \D_j$ whenever  $i\ne j$. If $\pi$ is supported by one cuspidal $\Z$-line (this is not essential condition), then $\pi$ is called a ladder representation.}). 
Another tool that we use in this paper is the M\oe glin-Waldspurger algorithm from \cite{MoeW-alg} for the Zelevinsky involution.

Although the main results of the paper are presented in the introduction in terms of the Langlands classification, the methods by which we have obtained them in the paper are based on the Zelevinsky classification (which is dual to the Langlands classification). We deal in the most of the paper with the representations $\mathcal R(n,d)_{(k)}^{(\rho)}$ which are defined in terms of the Zelevinsky classification. The relation with the representations $\mathbf R^{\mathbf t}(n,d)_{(k)}^{(\rho)}$ which we describe in the introduction is very simple
$$
\mathbf R^{\mathbf t}(n,d)_{(k)}^{(\rho)}=\mathcal R(d,n)_{(k)}^{(\rho)}.
$$
The main reason why the Zelevinsky classification is much more convenient for working with the derivatives, is that  there is a very simple explicit formula for the highest derivative of an irreducible representation in terms of this classification (Theorem 8.1 of \cite{Z};  in the case of Langlands classification we have an  algorithm).

We are very thankful to B. Leclerc who has informed us that our result about composition series in the case when the cuspidal representation $\rho$ is the trivial character of $F^\t$  can be deducted from Theorem 2 of \cite{Le}, which addresses Hecke algebra representations (his result is more general in this case - it gives combinatorial rule for calculating the
 composition factors there).
The theory of types for general linear groups from \cite{BK1}
 opens a possibility of approach to get the case of general $\rho$ using B. Leclerc result. We have not used this possibility. This way of proving the general case would be  technically  more complicated, relaying on types, attached Hecke algebras etc. (and already in the unramified case, it is not simple since \cite{Le}  is based on two previous papers,   one of which uses a very non-trivial positivity result of G. Lusztig).

The main reason for our approach is that the statement of our principal result does not include types, and therefore it is natural to (try to) have a proof of it which does not use them. 
The second reason are derivatives which we use in this paper (and  develop further methods for applying them). They are a very natural  tool in the study of questions related to the irreducible unitary representations. Namely, recall that already the main result of the first crucial  paper \cite{Be-P-inv} on the unitarizability in the $p$-adic case,  
relates unitarizability and derivatives for general linear groups. This J.~Bernstein paper was followed by the second paper \cite{T-AENS} where the unitarizability was solved completely, with essential use of derivatives (and soon realized in \cite{T-R-C-old} that the solution can be extended to the archimedean situation, avoiding derivatives in this case).  At the end, let us mention that our experience with the problems related to the unitary representations,  is that it is very important to have as simple (and direct) understanding of them  as possible.

We are very thankful to the referee for a number of corrections and very useful suggestions.

The content of the paper is as follows. The second section recalls  the notation that we  use in the paper. The third section contains preparatory technical results, while in the fourth section we define the multisegments $r_i(n,d)_{(k)}^{(\rho)}$. The fifth is devoted to the calculation of the composition series in the case of disjoint beginnings of segments, while the sixth section gives a description of the lattice of subrepresentations in this case. In  the seventh section we deal with the composition series in the remaining case (of non-disjoint  beginnings of segments) and the eighth section brings a description of the lattice of subrepresentations for this case. The ninth  section brings an interprettin in terms of the Langlands classification  of the main results of the paper (which are obtained in previous sections in terms of the Zelevinaky classification). At the end of this section we present  a conjectural  description of the composition series of the representation parabolically induced with a tensor product of  two arbitrary essentially Speh representations.

\section{Notation and preliminaries}

  We  recall very briefly some notation for general linear groups in the non-archimedean case (one can find  more details in \cite{Z} and \cite{Ro}). 
  
  \subsection{Finite multisets}
  
   Let $X$ be a set. The set of all finite multisets in $X$ is denoted by $M(X)$ (we can view each multiset as a functions $X\rightarrow \Z_{\geq0}$ with finite support; here  finite subsets correspond to all functions $\C S(\C C)\rightarrow \{0,1\}$ with finite support).
Elements of $M(X)$ are denoted by $(x_1,\dots,x_n)$ (repetitions of elements can occur; the  multiset does not change  if we permute $x_i$'s). The number 
$
n
$
 is called the cardinality of $(x_1,\dots,x_n)$, and it is denoted by
 $$
\text{card}(x_1,\dots,x_n).
$$
On $M(X)$ we have   a natural structure of a commutative associative semi group with zero: 
$$
(x_1,\dots,x_n)+(y_1,\dots,y_m)=(x_1,\dots,x_n,y_1,\dots,y_m).
$$
For $x,y \in M(X)$ we write 
$$
x\subseteq y
$$
 if 
 there exists $z\in M(X)$ such that $x+z=y$.

\subsection{Segments in $\C C$} 

Let $F$ be a non-archimedean locally compact non-discrete field and $|\ \ |_F$ its modulus character. 
Denote 
$$
G_n=GL(n,F), n\geq 0
$$
(we take $G_0$  to be the trivial group; we consider it formally  as the group of $0\times 0$ matrices).
The set of all equivalence classes of irreducible  representations of all groups $G_n, n\geq 0$, is denoted by
$$
Irr.
$$
The subset of all cuspidal classes of representations $G_n, n\geq 1$, is denoted by
$$
\C C.
$$
Unitarizable classes in $\C C$ are denoted by $\C C^u$.
 Put
$$
\nu=|\det|_F.
$$
Fix $u,v\in\mathbb \R$ such that $v-u\in \Z_{\geq 0}$,  and  $\rho\in \C C$. Then
the set 
$$
[\nu^u\rho,\nu^v\rho]=\{\nu^{u}\rho, \nu^{u+1}\rho,\dots,\nu^{v-1}\rho,\nu^{v}\rho\}
$$
 is called a segment in $\C C$. The set of all segments in cuspidal representations of general linear groups is denoted by
$$
\SC.
$$
Let  $\D=
[\nu^u\rho,\nu^v\rho]\in \SC$. 
The representation  $\nu^u\rho$   
 is called the beginning  
 of the segment $\D$, and  $\nu^v\rho$   
 is called the end  
 of the segment $\D$. We denote the beginning and the end by
 $$
 b(\D) \text{\ \ and \ \ } e(\D)
 $$
 respectively.

For $z\in \mathbb R$,  denote 
 $$
 \nu^z\D=\{\nu^z\rho';\rho'\in \D\}.
 $$
 We define $\D^-$ and $^-\D$ by 
 $$
 \D^-=
[\nu^{u}\rho,\nu^{v-1}\rho]
\text{ and }
 ^-\D=
[\nu^{u+1}\rho,\nu^v\rho]
$$ 
if $u<v$. Otherwise we take $^-\D=\emptyset$.

Segments $\D_1,\D_2 \in \SC$ are called linked if $\D_1\cup\D_2\in\C S(\C C)$ 
and $\D_1\cup\D_2\not\in\{\D_1,\D_2\}$. If the segments $\D_1$ and $\D_2$ are linked and if $\D_1$ and $\D_1\cup\D_2$ have the same beginnings, we say that 
$\D_1$ precedes $\D_2$. In this case we write
$$
\D_1\rightarrow \D_2.
$$

 \subsection{Multisegments} 

Let $a=(\D_1,\dots,\D_k)\in M(\SC)$. Suppose that $\D_i$ and $\D_j$ are linked for some $1\leq i<j\leq k$. Let $c$ be the multiset that we get by replacing segments $\D_i$ and $\D_j$  by segments $\D_i\cup \D_j$ and $\D_i\cap \D_j$ in $a$ (we omit $\D_i\cap \D_j$ if  $\D_i\cap \D_j=\emptyset$). In this case we write
$$
c\prec a.
$$
Using $\prec$, we generate in a natural way an ordering $\leq $ on $M(\SC)$.

For $a=(\D_1,\dots,\D_k)\in M(\SC)$,  denote
$$
a^-=(\D_1^-,\dots,\D_k^-), \     ^-a=(^-\D_1,\dots,^-\D_k)\  \in\   M(\SC)
$$
 (again, we omit $\D_i^-$ and $^-\D_i$ if they are empty sets).

 Further, for $a=(\D_1,\dots,\D_k)\in M(\SC)$  define
$$
\text{supp}(a)=\sum_{i=0}^k \D_i\in M(\mathcal C),
$$
where we consider in the above formula $\D_i$'s as elements of $M(\C C)$. 

The multiset of all beginnings (reps. ends) of segments from $a\in M(\SC)$ is denoted by $\C B(a)$ (resp. $\C E(a)$). Clearly, 
$$
\C B(a),\C E(a)\in M(\C C).
$$
 
 Take positive integers $n$ and $d$ and let $\rho\in \C C$. Denote
 \begin{equation}
\label{and}
a(n,d)^{(\rho)}=(\nu^{-\frac{n-1}2}\D,\nu^{-\frac{n-1}2+1}\D,\dots ,\nu^{\frac{n-1}2}\D)\in M(\SC),
\end{equation}
where  
$$
 \D
 =
 [\nu^{-(d-1)/2}\rho,\nu^{(d-1)/2}\rho].
 $$

 \subsection{Algebra of representations}

 The category of all smooth representations of $G_n$ is denoted by Alg($G_n)$. The set of all equivalence classes of irreducible smooth representations of $G_n$ is denoted by 
 $$
 \tilde G_n.
 $$
  The subset of unitarizable classes in $\tilde G_n$ is denoted by 
  $$
  \hat G_n.
  $$
The Grothendieck group of the category Alg$_{\text{f.l.}}(G_n)$ of all smooth representations of $G_n$ of finite length is denoted by $R_n$. It is a free $\mathbb Z$-module with basis $\tilde G_n$.
We have the canonical mapping  
$$
\text{s.s.}: \text{Alg}_{\text{f.l.}}(G_n)\ra R_n.
$$
The set of all finite sums in $R_n$ of elements of the basis $\tilde G_n$ is denoted by $(R_n)_+$. Set
$$
\aligned
&R=\oplus_{n\in \mathbb Z_{\geq0}} R_n,
\\
&R_+=\sum_{n\in \mathbb Z_{\geq0}} (R_n)_+.
\endaligned
$$
The ordering on $R$ is defined by $r_1\leq r_2 \iff r_2-r_1\in R_+$. 

An additive mapping $\varphi:R\ra R$ is called positive if
$$
r_1\leq r_2 \implies \varphi(r_1)\leq \varphi(r_2 ).
$$

For two finite length representations $\pi_1$ and $\pi_2$ of $G_n$ we shall write $\text{s.s.}(\pi_1)\leq \text{s.s.}(\pi_2)$ shorter
$$
\pi_1\leq \pi_2.
$$

 \subsection{Parabolic induction}
Let 
$$
M_{(n_1,n_2)}:=\left\{
\left[
\begin{matrix}
g_1 & *
\\
0 & g_2
\end{matrix}
\right]
;
g_i\in G_{i}
\right\} \subseteq G_{n_1+n_2},
$$
and let $\sigma_1$ and $\sigma_2$ be smooth representations of $G_{n_1}$ and $G_{n_2}$, respectively.
We consider $\s_1\o\s_2$ as a the  representation 
$$
\left[
\begin{matrix}
g_1 & *
\\
0 & g_2
\end{matrix}
\right]
\mapsto
\s_1(g_1)\o\s_2(g_2)
$$
of $M_{(n_1,n_2)}$.
By 
$$
\s_1\t\s_2
$$
is denoted the representation of $G_{n_1+n_2}$ parabolically induced by $\s_1\o\s_2$ from $M_{(n_1,n_2)}$ (the induction that we consider  is smooth and normalized). For three representations, we have
\begin{equation}
\label{asso}
(\s_1\t\s_2)\t\s_3\cong \s_1\t(\s_2\t\s_3).
\end{equation}
The induction functor is exact and we can lift it in a natural way to a $\mathbb Z$-bilinear mapping $\t:R_{n_1}\t R_{n_2}\rightarrow R_{n_1+n_2}$, and further to $\t : R\t R\rightarrow R$. In this way, $R$ becomes graded commutative ring. The commutativity implies that if $\pi_1\t\pi_2$ is  irreducible for $\pi_i\in\tilde G_{n_i}$, then
$$
\pi_1\t\pi_2\cong
\pi_2\t\pi_1.
$$

\subsection{Classifications of non-unitary duals} Let $\D=[\nu^u\rho,\nu^v\rho]\in\SC$. The representation
$$
\nu^v\rho\t\nu^{v-1}\rho\t\dots\t\nu^u\rho.
$$
 has a unique irreducible subrepresentation, which is denoted by
$$
\d(\D),
$$
and a unique irreducible quotient, which is denoted by 
$$
\z(\D).
$$
 The irreducible subrepresentation  is essentially square integrable, i.e. it becomes square integrable (modulo center) after twisting with a suitable character of the group.

Let $a=(\D_1,\dots,\D_n)\in M(\SC)$ be non-empty (i.e. $n\geq 1$). 
Choose an  enumeration of $\D_i$'s 
such that for all $i,j\in \{1,2,\dots,n\}$ the following  holds:

\begin{center}
if $\D_{i}\rightarrow \D_{j} $,  
then $j<i$.
\end{center}
Then the representations
$$
\aligned
&\zeta(a):= \z(\D_1)\t \z(\D_2)\t\dots\t \z(\D_n),
\\
& \lambda(a):= \d(\D_1)\t \d(\D_2)\t\dots\t \d(\D_n)
\endaligned
$$
are determined by $a$ up to an isomorphism (i.e., their isomorphism classes do not depend on the enumerations which satisfies the above condition). The representation $\zeta(a)$ has a unique irreducible subrepresentation, which is denoted by 
$$
Z(a),
$$
 while the representation $\lambda(a)$ has a unique irreducible quotient, which is denoted by 
 $$
 L(a).
 $$
 For the empty multisegment $\emptyset$, we take $Z(\emptyset)=L(\emptyset)$ to be the trivial (one-dimensional) representation of the trivial group $G_0$. This is the identity of the ring $R$ (and it is very often denoted simply by $1$).
 
  In this way we  obtain two classifications of $Irr$ by $M(\SC)$.
Here, $Z$ is called Zelevinsky classification of Irr, while $L$ is called Langlands classification of Irr.

 It is well known (see \cite{Z}) that for $a,b\in M(\SC)$ holds
 \begin{equation}
 \label{cs}
 Z(b)\leq \zeta(a) \iff b\leq a.
\end{equation}
The contragredient representation of $\pi$ is denoted by $\tilde \pi$. For $\D\in \SC$, set $\tilde \D:=\{\tilde \rho;\rho\in \D\}$. If $a=(\D_1,\dots,\D_k)\in M(\SC)$, then we put
$
\tilde a=(\tilde \D_1,\dots,\tilde \D_k).
$
Then 
$$
L(a)\tilde{\ }=L(\tilde a) \text{ \ and \ } Z(a)\tilde{\ }=Z(\tilde a).
$$
Analogous relations hold for Hermitian contragredients. The Hermitian contragredient of a representation $\pi$ is denoted by 
$$
\pi^+.
$$

\subsection{Classification of the unitary dual} 

Denote by 
$$
B_{\text{rigid}}=\{Z(a(n,d)^{(\rho)}); n,d\in\Z_{\geq1}, \rho\in \C C^u\}.
$$
 and
 $$
 B=B_{\text{rigid}}\cup\{\nu^\a \s\t\nu^{-\alpha}\s; \s\in B_{\text{rigid}}, 0<\a<1/2\}.
$$
 Then the unitary dual is described  by the following:

\begin{theorem} $($\cite{T-AENS}$)$
The map 
$$
(\tau_1,\dots,\tau_r)\mt \tau_1\t\dots\t\tau_r
$$
 is a bijection between $M(B)$ and the set of all equivalence classes of irreducible unitary representations of groups $GL(n,F)$, $n\geq 0$.

%
%
\end{theorem}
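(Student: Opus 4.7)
The plan is to establish the bijection in three stages: first show that every parabolic product $\tau_1 \t \cdots \t \tau_r$ with $\tau_i \in B$ is irreducible and unitary, then verify that distinct multisets in $M(B)$ yield non-isomorphic representations, and finally prove exhaustion --- every irreducible unitary representation of some $G_n$ arises this way.

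For the first stage, the crux is the unitarity of the Speh representations $Z(a(n,d)^{(\rho)})$ with $\rho \in \C C^u$. I would realize each Speh representation as a distinguished irreducible subquotient at an endpoint of a complementary series: start from an induced representation of the shape $\d(\D) \t \nu^\a \d(\D) \t \cdots \t \nu^{(n-1)\a} \d(\D)$ (twisted to be Hermitian) at $\a = 0$, deform $\a$ along the interval where the induction remains irreducible, and track a positive-definite Hermitian form up to the reducibility point $\a = 1$; at that endpoint the form descends to the distinguished irreducible subquotient, which one identifies with $Z(a(n,d)^{(\rho)})$ via Jacquet module/socle computations. The complementary series $\nu^\a \s \t \nu^{-\a} \s$ for $\s \in B_{\text{rigid}}$ and $0 < \a < 1/2$ are then Hermitian by construction, so unitarity on that interval propagates by continuity from the unitary induction at $\a = 0$, provided irreducibility holds throughout --- and this irreducibility amounts to locating the first reducibility point beyond $\a = 0$, which is the content of Theorem \ref{th-i}. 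Irreducibility of the full product $\tau_1 \t \cdots \t \tau_r$ then follows from iterated application of Bernstein's theorem \cite{Be-P-inv} that products of irreducible unitary representations of general linear groups remain irreducible.

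For the second stage, the multiset in $M(B)$ can be reconstructed from the cuspidal support together with the Langlands parameters of the irreducible representation $\tau_1 \t \cdots \t \tau_r$, using injectivity of the Langlands classification and the segment combinatorics that separate the rigid building blocks from the complementary-series pieces.

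The main obstacle is the third stage. The plan is an induction on $n$ using derivatives: Bernstein's theorem from \cite{Be-P-inv} guarantees that the highest derivative of an irreducible unitary representation of $G_n$ is again irreducible unitary, which reduces an arbitrary $\pi \in \hat G_n$ to a representation of strictly smaller rank whose form is already classified by the inductive hypothesis. This constrains the possible Langlands parameters of $\pi$ to a finite list of candidates, and one then eliminates the spurious candidates by direct analysis of Hermitian signatures along chains of induced representations --- tracking where the signature ceases to be positive semidefinite. Matching the surviving candidates with multisets in $M(B)$ is a delicate combinatorial step involving segment arithmetic, and this exhaustion portion constitutes the deepest and most technical part of \cite{T-AENS}.
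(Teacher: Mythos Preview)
The paper does not prove this theorem at all: it is stated in Section~2.7 purely as a citation from \cite{T-AENS}, with no argument given. So there is no ``paper's own proof'' to compare against; the theorem functions here as background recalling the classification of the unitary dual, which motivates the study of Speh representations and their complementary series undertaken in the body of the paper.

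That said, your sketch contains a genuine circularity and a misattribution that should be flagged. You write that the irreducibility of $\nu^\a\s\t\nu^{-\a}\s$ for $0<\a<1/2$ ``is the content of Theorem~\ref{th-i}.'' This is wrong on two counts. First, Theorem~\ref{th-i} is the main result of the \emph{present} paper and is proved long after the cited theorem is stated; invoking it here would make the logic of the paper circular, since the introduction explicitly motivates Theorem~\ref{th-i} by the already-established unitary classification. Second, Theorem~\ref{th-i} concerns the composition series of $\mathbf R^{\mathbf t}(n,d)_{(k)}^{(\rho)}$ at \emph{integer} shifts $k$, i.e.\ the reducibility points at and beyond the end of the complementary series; it says nothing about irreducibility on the open interval $0<\a<1/2$. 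That irreducibility follows from much more elementary Bernstein--Zelevinsky segment combinatorics (for non-integral $\a$ in this range the relevant segments are not linked), and in the original argument of \cite{T-AENS} it is handled without anything like Theorem~\ref{th-i}. The remainder of your outline --- unitarity of Speh representations via limits of complementary series, irreducibility of products via Bernstein, and exhaustion by induction on the highest derivative using \cite{Be-P-inv} --- is broadly faithful to the strategy of \cite{T-AENS}, but it is a sketch of that earlier paper, not of anything appearing here.
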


\subsection{Duality - Zelevinsky involution} Define a mapping
$$
^t:\text{Irr} \ra \text{Irr}
$$
by $Z(a)^t=L(a), a \in M(\SC)$.  Obviously,
\begin{equation}
\label{inv-def}
\z(\D)^t=\d(\D),\quad \D\in \SC.
\end{equation}
We lift $^t$ to an additive homomorphism $^t:R\ra R$.  Clearly, $^t$ is a positive mapping, i.e., satisfies: $r_1\leq r_2\implies r_1^t\leq r_2^t$.  A non-trivial fact is that $^t$ is also multiplicative, i.e., a ring homomorphism (see \cite{Au} and \cite{ScSt}\footnote{More precisely, A.V. Zelevinsky used \eqref{inv-def} to define the involution on $R$, and \cite{Au} and \cite{ScSt} prove the positivity of this involution. Our definition of $^t$ is equivalent to that of A.V. Zelevinsky (see the beginning of  section \ref{sec-conn-Z-L}).}). Further, $^t$ is an involution. Define $a^t\in M(\SC)$ for $ a \in M(\SC)$  by the requirement
$$
(L(a))^t=L(a^t).
$$
We could also use the Zelevinsky classification to define $^t:M(\SC)\ra M(\SC)$, and we would get the same involutive mapping.
Recall that
$$
Z(a_1+a_2)\leq Z(a_1)\t Z(a_2)
$$
(Proposition 8.4 of \cite{Z}).
 From this follows directly 
\begin{equation}
\label{tt}
Z((a_1^t+a_2^t)^t)\leq Z(a_1)\t Z(a_2).
\end{equation}
One can find more information about the involution in \cite{Ro}.


\subsection{Algorithm of C. M\oe glin and J.-L. Waldspurger} 
Let $a\in M(\SC)$ be  non-empty. Fix $\rho\in \mathcal C$ and denote by
$$
X_\rho(a)
$$ 
the set of all $x\in\R$ such that there exists a segment $\D$ in $a$ satisfying $e(\D)\cong \nu^x\rho$.

Now fix $\rho$ such that $X_\rho(a)\ne \emptyset$. Let $x=\max (X_\rho(a))$, and
consider segments $\D$  in $a$ such that $e(\D)\cong \nu^x\rho$. Among these segments, choose one of minimal cardinality. Denote it by $\D_1$. This will be called the first stage of the algorithm.

Consider now segments $\D$ in $a$  such that $e(\D)\cong \nu^{x-1}\rho$, and which are linked with $\D_1$. Among them, if  this set is non-empty, choose one with minimal cardinality. Denote it by $\D_2$.

One continues this procedure with ends $x-2$, $x-3$, etc., as long as it is possible. The segments considered in this procedure are $\D_1, \dots, \D_k$ ($k\geq 1$).
Let 
$$
\G_1=[e(\D_{k}),e(\D_1)]=[\nu^{x-k+1}\rho,\nu^x\rho]\in  \SC.
$$

 Let $a^\la$ be the multiset of  $M(\C C)$ which we get from $a$ by replacing each $\D_i$ by $\D_i^-$, $i=1,\dots ,k$ (we  omit those $\D_i^-$ for which  $\D_i^-=\emptyset$). 
 
 If $a^\la$  is non-empty, we now repeat  the above procedure with $a^\la$ as long as possible. In this way we get a segment $\G_2$ and $(a^\la)^\la\in M(\SC)$.

Continuing this procedure as long as possible, until we reach the empty set, we get $\G_1,\dots,\G_m\in \mathcal S(\C C)$. Then by \cite{MoeW-alg}  
we have 
$$
a^t= (\G_1,\dots,\G_m).
$$
This algorithm will be denoted by
$$
\text{MWA}^{\la}.
$$

There is also a dual (or "left") version of this algorithm, denoted by $^\ra$MWA (see \cite{T-GL-red}).

With this, is is easy to show that
\begin{equation}
\label{t}
Z(a(n,d)^{(\rho)})\cong L(a(d,n)^{(\rho)})
\end{equation}
 for $n,d\in\Z_{\geq1}$ and $\rho\in\mathcal C$ (in \cite{T-AENS} is obtained this relation in a different way).

\subsection{Derivatives on the level of $R$}  The algebra $R$ is a $\Z$-polynomial algebra over $\{\z(\D);\D\in \SC\}$ (Corollary 7.5 of \cite{Z}). Therefore, there exists a unique ring homomorphism
$$
\Der:R\ra R 
$$
satisfying
$$
\Der(\z(\D))=\z(\D)+\z(\D^-),\quad \forall \D\in\SC.
$$
Let $r\in R_n$, $r>0$. Write $\Der(r)=\sum_i r^{(i)},$ where $r^{(i)}\in R_i$. Then obviously $r^{(i)}=0$ for $i> n$, and $r^{(n)}=r$. Denote by $k$ the maximal index satisfying $r^{(i)}=0$ for all $i<k$. Then we define the highest derivative $\text{h.d.}(r)$ of $r$ by
$$
\text{h.d.}(r)=r^{(k)}.
$$
Let $r_i\in R_{n_i}$, $r_i>0$, for $i=1,2$. Then obviously holds
$$
\text{h.d.}(r_1\t r_2)=\text{h.d.}(r_1)\t \text{h.d.}(r_2)
$$
since $R$ is a graded  integral domain.

We shall use the derivatives on the level of $R$ in most of the paper. Only in the sections 
\ref{sec-der-disjoint-b} and  
\ref{sec-der-lattice-non-disjoint-b} we shall use derivatives on the level of representations (where we study the lattices of subrepresentations). 
The following two fundamental very non-trivial facts about $\Der$  play  an important role in our paper\footnote{One can can find at the beginning of  section
 \ref{sec-der-disjoint-b}  the references for them.}:

\begin{enumerate}

\item
 $\Der$ is a positive homomorphism (i.e. $r> 0\implies \Der(r)> 0$).

\item
Let $\pi=Z(a)$ be an irreducible representation of $G_n$, and consider it as an element of $R$.  
Then
$$
\text{h.d.}(Z(a))=Z(a^-).
$$
\end{enumerate}

Observe that using the  above formula for the highest derivative of $Z(a)$, one easily reconstructs $Z(a)$ from its highest derivative and the cuspidal support of $a$.  
This implies that using the formula
$$
\text{h.d.}(Z(a_1)\t\dots\t Z(a_k))
=
Z(a_1^-)\t\dots\t Z(a_k^-),
$$
we can reconstruct from the composition series of the above highest derivative  all the irreducible subquotients $Z(a)$ of $Z(a_1)\t\dots\t Z(a_k)$ which satisfy
$$
\text{card}(a)= \text{card}(a_1+\dots+a_k).
$$
Moreover, the corresponding multiplicities also coincide.

E. Lapid and A. M\'inguez have obtained in \cite{LMi} the formula for the derivative of the ladder representations (ladder representations are defined in \cite{LMi}). Representations  $Z(a(n,d)^{\rho)})$ are very special case of ladder representations. We shall now explain this formula in the case of representations $Z(a(n,d)^{\rho)})$ (this formula  will play crucial role in our paper).
Write   $a(n,d)^{(\rho)}=(\D_1,\dots,\D_n)$ in a way that $\D_1\ra \dots\ra \D_n$. Then
$$
\Der(Z(\D_1,\dots,\D_n))= Z(\D_1,\dots,\D_n) \hskip85mm
$$
$$
 + Z(\D_1^-,\D_2,\dots,\D_n)+\dots+Z(\D_1^-,\dots,\D_{n-1}^-,\D_n)+Z(\D_1^-,\dots,\D_n^-).
$$

\begin{remark}
\label{rm-der-dual}
Sometimes it is useful to consider the positive ring homomorphism 
$$
\tilde{\ }\Der\tilde{\ }:R\ra R, \pi\mapsto (\Der(\tilde\pi))\tilde{\ }.
$$
This homomorphism has analogous properties as $\Der$: it is positive and it sends
$$
\z(\D)\mapsto\z(\D)+\z(^-\D).
$$
 Here the highest derivative of $Z(a)$ for this homomorphism is $Z(^-a)$. Further one  has
 $$
 \tilde{\ }\Der\tilde{\ } (Z(a(n,d)^{(\rho)}))
 = Z(\D_1,\dots,\D_n)   \hskip85mm
$$
$$
+ Z(\D_1,\dots,\D_{n-1},^-\D_n)+\dots+Z(\D_1,^-\D_2,\dots,^-\D_n)+Z(^-\D_1,\dots,^-\D_n).
$$
\end{remark}

\section{Some general technical lemmas}

\subsection{Representations}
We shall consider the representations
$$
{\C R}(n,d)_{(l)}^{(\rho)}:=\nu^{-l/2}Z(a(n,d)^{(\rho)})\times \nu^{l/2}Z(a(n,d)^{(\rho)})
$$
$$
\hskip20mm= Z(a(n,d)^{(\nu^{-l/2}\rho)})\times Z(a(n,d)^{(\nu^{l/2}\rho)}),
$$
where $n,d \in\mathbb Z_{\geq1}$, $l\in\Z$ and $\rho\in\C C$. 
Observe that in $R$ we have
\begin{equation}
\label{invo}
({\C R}(n,d)_{(l)}^{(\rho)})^t={\C R}(d,n)_{(l)}^{(\rho)}
\end{equation}
and
$$
\text{s.s.}({\C R}(n,d)_{(l)}^{(\rho)})=\text{s.s.}({\C R}(n,d)_{(-l)}^{(\rho)}).
$$
The formula for the highest derivative is
$$
\text{h.d.}({\C R}(n,d)_{(k)}^{(\rho)})={\C R}(n,d-1)_{(k)}^{(\nu^{-1/2}\rho)}
$$
(we take formally  ${\C R}(n,0)_{(k)}^{(\rho)}$ to be $Z(\emptyset)).$

In what follows, we shall always assume 
$$
k \in\Z_{\geq0}.
$$
When $n,d,k$ and $\rho$ are fixed, 
  we  denote
$$
a_-=a(n,d)^{(\nu^{-k/2}\rho)}, \quad a_+=a(n,d)^{(\nu^{k/2}\rho)}.
$$
Write segments $\D_1,\dots,\D_n$ of $a_-$ in a way that 
$$
\D_1\ra\D_2\ra\dots \ra \D_n,
$$
and segments $\G_1\dots,\G_n$ of $a_+$ also in a way that
$$
\G_1\ra\G_2\ra\dots \ra \G_n.
$$
Now we introduce the following numbers (in $\frac12\Z)$:

$
\qquad \quad A_-=-\frac{d-1}2-\frac{n-1}2-\frac k2,\qquad \quad\quad\qquad\qquad B_-=\frac{d-1}2-\frac{n-1}2-\frac k2,
$

$
\qquad \qquad  C_-=-\frac{d-1}2+\frac{n-1}2-\frac k2,\qquad \quad\quad\qquad\qquad D_-=\frac{d-1}2+\frac{n-1}2-\frac k2,
$

$
 \qquad \quad \qquad A_+=-\frac{d-1}2-\frac{n-1}2+\frac k2,\quad\qquad \qquad  \qquad \quad B_+=\frac{d-1}2-\frac{n-1}2+\frac k2,
$

$
\qquad  \quad \quad \qquad C_+=-\frac{d-1}2+\frac{n-1}2+\frac k2,\quad\qquad \qquad  \qquad \quad\quad  D_+=\frac{d-1}2+\frac{n-1}2+\frac k2.
$

Observe that
$$
\D_1=[\nu^{A_-}\rho,\nu^{B_-}\rho], \quad \D_2=[\nu^{A_-+1}\rho,\nu^{B_-+1}\rho], \quad \dots, \quad
\D_n=[\nu^{C_-}\rho,\nu^{D_-}\rho],
$$
$$
\G_1=[\nu^{A_+}\rho,\nu^{B_+}\rho], \quad \G_2=[\nu^{A_++1}\rho,\nu^{B_++1}\rho], \quad \dots, \quad
\G_n=[\nu^{C_+}\rho,\nu^{D_+}\rho].
$$
Obviously, $A_-\leq  B_-,C_-\leq D_-$, $A_-\leq C_-,B_-\leq D_-$
and
$
B_--A_-=D_--C_-.
$
Analogous relations hold for $A_+,B_+,C_+$ and $D_+$.

It is well known that ${\C R}(n,d)_{(0)}^{(\rho)}$ is irreducible (see \cite{Ba-Sp}, and also \cite{T-AENS}).

Observe that for $D_-+2\leq A_+$, ${\C R}(n,d)_{(k)}^{(\rho)}$ is irreducible by Proposition 8.5 of \cite{Z}.
In other words, for 
$$
n+d\leq k
$$
${\C R}(n,d)_{(k)}^{(\rho)}$ is irreducible. Therefore, we can have reducibility (for $k\geq 0$) only if 
\begin{equation}
\label{eq-1-diff}
1\leq k \leq n+d-1.
\end{equation}

 We shall assume in the rest of this section that   \eqref{eq-1-diff} holds.

  Consider exponents that show up in the cuspidal supports of both $a_-$ and $a_+$. The cardinality of this set is $D_--A_++1$, which is
\begin{equation}
\label{intersection}
n+d-1-k.
\end{equation} 

\subsection{Unique irreducible subrepresentation and quotient}
\begin{proposition} 
\label{prop-sub-quo}
Suppose 
$$
1\leq k \leq n+d-1.
$$
The representation 
$$
{\C R}(n,d)_{(-k)}^{(\rho)}
$$
 has a unique irreducible subrepresentation, and it is isomorphic to
\begin{equation}
\label{eq-sub-rep}
Z(a(n,d)^{(\nu^{-k/2}\rho)}+a(n,d)^{(\nu^{k/2}\rho)}).
\end{equation}
Further, it has a unique irreducible quotient, and it is isomorphic to
\begin{equation}
\label{eq-quot}
Z((a(d,n)^{(\nu^{-k/2}\rho)}+a(d,n)^{( \nu^{k/2}\rho)})^t). 
\end{equation}
Both irreducible representations have multiplicity one in the whole representation.

The representation ${\C R}(n,d)_{(k)}^{(\rho)}$  has \eqref{eq-quot} as   unique irreducible subrepresentation and \eqref{eq-sub-rep}  as unique irreducible quotient. Their position is opposite to the position in ${\C R}(n,d)_{(-k)}^{(\rho)}$.
 \end{proposition}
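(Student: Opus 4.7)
The plan is to handle ${\C R}(n,d)_{(-k)}^{(\rho)}=Z(a_+)\times Z(a_-)$ first --- its socle via the Zelevinsky classification and its cosocle via the Langlands classification --- and then deduce the corresponding statement for ${\C R}(n,d)_{(k)}^{(\rho)}$ by a contragredient argument. The key uniform observation, available because $k\geq 1$, is that every exponent appearing in the segments of $a_-$ is strictly smaller than every exponent in $a_+$, and the analogous inequality holds for the $t$-dual multisegments $c_\pm:=a(d,n)^{(\nu^{\pm k/2}\rho)}$. Consequently no segment of $a_+$ can precede a segment of $a_-$: any cross-linkage in $a_++a_-$ runs from an $a_-$-segment to an $a_+$-segment, and similarly for $c_+,c_-$.

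For the socle, I would combine this with the internal chains $\Delta_1\to\cdots\to\Delta_n$ in $a_-$ and $\Gamma_1\to\cdots\to\Gamma_n$ in $a_+$ to conclude that the concatenation $(\Gamma_n,\ldots,\Gamma_1,\Delta_n,\ldots,\Delta_1)$ is a valid Zelevinsky enumeration of $a_++a_-$, whence $\zeta(a_+)\times\zeta(a_-)\cong\zeta(a_++a_-)$. The canonical embeddings $Z(a_\pm)\hookrightarrow\zeta(a_\pm)$ and exactness of parabolic induction then give ${\C R}(n,d)_{(-k)}^{(\rho)}\hookrightarrow\zeta(a_++a_-)$, and the Zelevinsky-classification fact that $Z(a_++a_-)$ is the unique irreducible subrepresentation of $\zeta(a_++a_-)$, of multiplicity one, transfers to ${\C R}(n,d)_{(-k)}^{(\rho)}$, yielding \eqref{eq-sub-rep}.

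For the cosocle, I would rewrite $Z(a_\pm)=L(c_\pm)$ via \eqref{t}, so that ${\C R}(n,d)_{(-k)}^{(\rho)}=L(c_+)\times L(c_-)$, and run the dual analysis: the concatenation $(c_+,c_-)$ (each internally ordered so that preceding segments receive the larger index) is a valid Langlands enumeration of $c_++c_-$, hence $\lambda(c_+)\times\lambda(c_-)\cong\lambda(c_++c_-)$, and the surjections $\lambda(c_\pm)\twoheadrightarrow L(c_\pm)$ combine to $\lambda(c_++c_-)\twoheadrightarrow {\C R}(n,d)_{(-k)}^{(\rho)}$. Any irreducible quotient of ${\C R}(n,d)_{(-k)}^{(\rho)}$ is therefore an irreducible quotient of $\lambda(c_++c_-)$, hence equals $L(c_++c_-)=Z((c_++c_-)^t)$, with multiplicity one inherited from the count in $\lambda(c_++c_-)$; this is \eqref{eq-quot}. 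Finally, for ${\C R}(n,d)_{(k)}^{(\rho)}$ I would use $(\pi_1\times\pi_2)^{\sim}=\tilde\pi_1\times\tilde\pi_2$ together with $\widetilde{a(n,d)^{(\nu^\alpha\rho)}}=a(n,d)^{(\nu^{-\alpha}\tilde\rho)}$ to obtain $\widetilde{{\C R}(n,d)_{(k)}^{(\rho)}}={\C R}(n,d)_{(-k)}^{(\tilde\rho)}$; since the contragredient interchanges socle and cosocle, applying it to the formulas just proven gives \eqref{eq-quot} as the socle and \eqref{eq-sub-rep} as the cosocle of ${\C R}(n,d)_{(k)}^{(\rho)}$, with positions opposite to those in ${\C R}(n,d)_{(-k)}^{(\rho)}$. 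The main obstacle is essentially bookkeeping --- the precedence analysis of cross-linkages --- which I would make rigorous using the tabulated values $A_\pm,B_\pm,C_\pm,D_\pm$; no deeper difficulty is expected.
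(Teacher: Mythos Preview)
Your ``key uniform observation'' is false, and this breaks the argument for small $k$. You assert that every exponent in $a_-$ is strictly smaller than every exponent in $a_+$; in the paper's notation this means $D_-<A_+$, i.e.\ $k\geq n+d-1$, which is only the extreme endpoint of the range $1\leq k\leq n+d-1$. More seriously, the weaker claim you actually need---that no $\Gamma_j$ precedes any $\Delta_i$---also fails. A short computation gives $\Gamma_j\to\Delta_i$ if and only if $k<i-j\leq k+d$; for any $k\leq n-2$ the choice $j=1$, $i=k+2$ exhibits such a cross--precedence (e.g.\ $n=3$, $d=1$, $k=1$: $\Gamma_1\to\Delta_3$). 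Hence the concatenation $(\Gamma_n,\ldots,\Gamma_1,\Delta_n,\ldots,\Delta_1)$ is \emph{not} a valid Zelevinsky enumeration of $a_++a_-$, the isomorphism $\zeta(a_+)\times\zeta(a_-)\cong\zeta(a_++a_-)$ is unjustified, and the embedding $Z(a_+)\times Z(a_-)\hookrightarrow\zeta(a_++a_-)$ does not follow. The same obstruction hits the dual (Langlands) side.

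The paper repairs exactly this. When $k\geq n-1$ your argument is essentially the paper's (take $t=n$). When $k<n-1$ the paper first embeds $Z(a_+)\hookrightarrow Z(\Gamma_{n-k+1},\ldots,\Gamma_n)\times Z(\Gamma_1,\ldots,\Gamma_{n-k})$ and $Z(a_-)\hookrightarrow Z(\Delta_{k+1},\ldots,\Delta_n)\times Z(\Delta_1,\ldots,\Delta_k)$, then observes that the two middle factors are \emph{identical} essentially Speh representations (since $\Gamma_j=\Delta_{j+k}$), so their product is irreducible by the unitary theory; this lets one collapse the middle and then embed into $\zeta(a_++a_-)$. Your contragredient passage from ${\C R}(n,d)_{(-k)}^{(\rho)}$ to ${\C R}(n,d)_{(k)}^{(\rho)}$ is fine and matches the paper.
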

 
 \begin{proof} Let $t$ be any integer satisfying $0\leq t \leq n$.  We have
 $
  Z(a_+) \h \zeta(a_+)
 $
 and 
 $$
  Z(\G_{n-t+1},\dots,\G_n)\t Z(\G_1,\dots,\G_{n-t})\h \zeta(a_+).
 $$
 Since $ \zeta(a_+)$ has a unique irreducible subrepresentation, and it is $Z(a_+)$, we get that there exists an embedding 
 $$
 Z(a_+)\h Z(\G_{n-t+1},\dots,\G_n)\t Z(\G_1,\dots,\G_{n-t}).
 $$
 Analogously, we get
 $$
 Z(a_-)\h Z(\D_{t+1},\dots,\D_n)\t Z(\D_1,\dots,\D_t).
 $$
 Now we shall specify $t$. If
 $$
 C_-+D_-\leq A_++B_+
 $$
 (i.e. $n-1\leq k$), we take $t=n$. In the opposite case $A_++B_+<C_-+D_-$ (i.e. $k<n-1$),
 $$
 t=k.
 $$
Observe that this implies that $\G_1=\D_{t+1},\dots, \G_{n-t}=\D_n$. 
 
 Now we have
 $$
 Z(a_+)\t  Z(a_-)\h Z(\G_{n-t+1},\dots,\G_n)\t Z(\G_1,\dots,\G_{n-t}) \t Z(\D_{t+1},\dots,\D_n)\t Z(\D_1,\dots,\D_t)
 $$
 $$
 \cong Z(\G_{n-t+1},\dots,\G_{n})\t Z(\G_1,\dots,\G_{n-t},\D_{t+1},\dots,\D_n)\t Z(\D_1,\dots,\D_t)
 $$
 since $Z(\G_1,\dots,\G_{n-t}) \t Z(\D_{t+1},\dots,\D_n)$  is irreducible (we are in the essentially unitary situation, from which we get irreducibility of the induced representation).
 Now
 $$
 Z(\G_{n-t+1},\dots,\G_n)\t Z(\G_1,\dots,\G_{n-t},\D_{t+1},\dots,\D_n)\t Z(\D_1,\dots,\D_t)\h
 $$
 $$
 \zeta(\G_{n-t+1},\dots,\G_n)\t \zeta(\G_1,\dots,\G_{n-t},\D_{t+1},\dots,\D_n)\t \zeta(\D_1,\dots,\D_t)\cong \zeta(a_++a_-).
 $$
 Therefore, $Z(a_+)\t  Z(a_-)$ has a unique irreducible subrepresentation, and it is $Z(a_++a_-)$.
 
 For the quotient setting, observe that 
 $$
{\C R}(n,d)_{(-k)}^{(\rho)} \cong L(a(d,n)^{(\nu^{k/2}\rho)})\t L(a(d,n)^{(\nu^{-k/2}\rho)}).
$$
Now applying similar arguments as above (dealing with quotients instead of subrepresentations), we get that ${\C R}(n,d)_{(-k)}^{(\rho)}$ has a unique irreducible quotient, and that this quotient  is 
$$
L(a(d,n)^{(\nu^{k/2}\rho)}+a(d,n)^{(\nu^{-k/2}\rho)})=Z((a(d,n)^{(\nu^{k/2}\rho)}+a(d,n)^{(\nu^{-k/2}\rho)})^t). 
$$

To get the description of the irreducible quotient and the irreducible subrepresentation of  ${\C R}(n,d)_{(k)}^{(\rho)}$, apply the contragredient to ${\C R}(n,d)_{(-k)}^{(\tilde\rho)}$.

The proof of the proposition is now complete.
 \end{proof}
 
 \subsection{Highest derivatives of irreducible subquotients}

\begin{lemma}
\label{le-hd}
 Let $Z(b)\leq Z(a_-)\times Z(a_+)$. Denote by $c(b)$ the cardinality of $b$. Then  
$$
n\leq c(b)\leq 2n,
$$
and we get the cuspidal support of the highest derivative of $b$ from the cuspidal support of $a_-+a_+$ removing ends of all segments in $a_+$ and removing ends of the first $c(b)-n$  segments in $a_-$. Therefore, the multiset of ends of $b$ consists of all ends of segments $\G_i$, and the ends of the  first $c(b)-n$ ends of segments $\D_i$.
\end{lemma}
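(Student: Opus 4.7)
The plan is to apply the positive ring homomorphism $\Der$ to the containment $Z(b) \leq Z(a_-) \times Z(a_+)$ in $R$, obtaining $\Der(Z(b)) \leq \Der(Z(a_-)) \times \Der(Z(a_+))$, and then to compare graded pieces. Using the Lapid--M\'inguez ladder formula recalled at the end of Section~2.10,
\[
\Der(Z(a_-)) = \sum_{i=0}^{n} Z(a_-^{(i)}), \qquad a_-^{(i)} := (\D_1^-, \ldots, \D_i^-, \D_{i+1}, \ldots, \D_n),
\]
with the $i$-th summand in graded degree $nd - i$, and analogously for $a_+$. Multiplying decomposes the right-hand side as $\sum_{i, j = 0}^n Z(a_-^{(i)}) \times Z(a_+^{(j)})$, with the $(i,j)$-term living in degree $2nd - i - j$.

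Next I would extract the lowest nonzero graded component of $\Der(Z(b))$. By the highest-derivative formula for irreducibles recalled in Section~2.10, this component is $Z(b^-)$, in degree $2nd - c(b)$. Since the inequality holds degree by degree and the right-hand side is supported only in degrees $2nd - 2n, \ldots, 2nd$, one immediately deduces $c(b) \leq 2n$, and moreover
\[
Z(b^-) \leq \sum_{i+j \,=\, c(b)} Z(a_-^{(i)}) \times Z(a_+^{(j)}).
\]
Since cuspidal support is preserved under passage to subquotients, there must exist $(i, j)$ with $i + j = c(b)$ and $0 \leq i, j \leq n$ for which the cuspidal support of $Z(b^-)$ equals that of $Z(a_-^{(i)}) \times Z(a_+^{(j)})$. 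Subtracting this from $\text{supp}(a_- + a_+) = \text{supp}(b)$ gives the matching relation
\[
\text{ends}(b) = \{\nu^{B_-}\rho, \ldots, \nu^{B_- + i - 1}\rho\} + \{\nu^{B_+}\rho, \ldots, \nu^{B_+ + j - 1}\rho\}
\]
as multisets in $\mathcal{C}$.

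The main obstacle is pinning down $(i, j)$ uniquely and in particular deducing the lower bound $c(b) \geq n$. The key structural observation is that $\nu^{D_+}\rho$ has multiplicity exactly one in $\text{supp}(a_- + a_+)$: since $k \geq 1$ we have $D_+ > D_-$, so $\nu^{D_+}\rho$ does not appear in $\text{supp}(a_-)$, and within $a_+$ it lies only in $\G_n$. Because $\nu^{D_+}\rho$ is also the maximum-exponent cuspidal in the entire support, the unique segment of $b$ containing it cannot extend further to the right and so must terminate there; consequently $\nu^{D_+}\rho$ appears with multiplicity one in $\text{ends}(b)$. In the matching equation, the relation $\nu^{D_+}\rho \in \{\nu^{B_-}\rho, \ldots, \nu^{B_- + i - 1}\rho\}$ would require $i \geq n + k$, impossible since $i \leq n$ and $k \geq 1$; hence $\nu^{D_+}\rho$ lies in the $a_+$-block, which forces $j \geq n$, i.e.\ $j = n$. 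Then $i = c(b) - n$, and the non-negativity $i \geq 0$ gives $c(b) \geq n$. Plugging $(i, j) = (c(b) - n, n)$ back into the matching equation produces exactly the description of $\text{ends}(b)$ (all ends of $a_+$ together with the first $c(b) - n$ ends of $a_-$) and of the cuspidal support of $\text{h.d.}(Z(b))$ asserted by the lemma.
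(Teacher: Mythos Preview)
Your proof is correct and follows essentially the same line as the paper's: apply $\Der$ to the inequality, expand via the ladder formula, and use that the top cuspidal $\nu^{D_+}\rho$ (which has multiplicity one in $\supp(a_-+a_+)$ since $k\geq1$) must disappear in passing to $b^-$, forcing the $a_+$-factor to be the fully truncated one $(\G_1^-,\dots,\G_n^-)$. The paper compresses all of this into two sentences, simply noting that $e(\G_n)$ is absent from $\supp(b^-)$ and hence the second factor must be $Z(\G_1^-,\dots,\G_n^-)$; your explicit ``ends'' matching equation and the verification that $\nu^{D_+}\rho$ cannot lie in the $a_-$-block are just a spelled-out version of that same observation.
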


\begin{proof}
Recall that 
$$
\Der(Z(a_-)\t Z(a_+))= \bigg(Z(\D_1,\dots,\D_n) + Z(\D_1^-,\D_2,\dots,\D_n)+\dots+Z(\D_1^-,\dots,\D_n^-)\bigg)
$$
$$
\t \bigg( Z(\G_1,\dots,\G_n) + Z(\G_1^-,\G_2,\dots,\G_n)+\dots+Z(\G_1^-,\dots,\G_n^-)\bigg).
$$
Observe  that the end of the last segment in $a_+$ is not in the support of the highest derivative of $b$. Therefore, the highest derivative of $Z(b)$ must be a subquotient of the product, in which the second factor is $Z(\G_1^-,\dots,\G_n^-)$. Now the claim of the lemma follows directly (use \eqref{cs}).
\end{proof}

\subsection{Symmetry} 

\begin{lemma} 
\label{le-sym}
Let $Z(b)\leq Z(a_-)\times Z(a_+)$. Suppose that $\rho$ is unitarizable. Then
$$
\tau\mapsto\tau^+ 
$$
defines a bijection from the multiset of all ends of segments in $b$ onto the multiset of all beginnings of segments in $b$, i.e. 
$$
\C E(b)^+= \C B(b).
$$
\end{lemma}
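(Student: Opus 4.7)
The plan is to identify both multisets $\mathcal{E}(b)$ and $\mathcal{B}(b)$ explicitly and compare them directly. Since $\rho$ is unitarizable one has $(\nu^x\rho)^+ = \nu^{-x}\rho$, and the defining formulas for the parameters yield
\[
A_- = -D_+, \qquad B_- = -C_+, \qquad C_- = -B_+, \qquad D_- = -A_+.
\]
Set $c = \text{card}(b)$.

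First I would apply Lemma \ref{le-hd} to read off
\[
\mathcal{E}(b) = \{\nu^{B_-+j}\rho : 0 \leq j \leq c-n-1\} \cup \{\nu^{B_++j}\rho : 0 \leq j \leq n-1\}.
\]
Applying the Hermitian contragredient and using $-B_- = C_+$, $-B_+ = C_-$ together with $C_\pm = A_\pm + (n-1)$, this multiset rewrites as
\[
\mathcal{E}(b)^+ = \{\nu^{A_-+i}\rho : 0 \leq i \leq n-1\} \cup \{\nu^{A_++i}\rho : 2n-c \leq i \leq n-1\}.
\]

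Next, I would establish the left analog of Lemma \ref{le-hd} for the positive homomorphism $\tilde{\ }\Der\tilde{\ }$ of Remark \ref{rm-der-dual}: the multiset $\mathcal{B}(b)$ consists of all beginnings of the segments $\D_i$ together with the $c-n$ largest beginnings of the segments $\G_i$. The argument is the mirror image of the proof of Lemma \ref{le-hd}. Indeed, $\nu^{A_-}\rho$ is the unique smallest element of $\text{supp}(a_-+a_+) = \text{supp}(b)$, and $\nu^{A_--1}\rho$ does not lie in this support, so every occurrence of $\nu^{A_-}\rho$ in $b$ must sit at a beginning of a segment; hence $\nu^{A_-}\rho$ is absent from $\text{supp}({}^-b)$. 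Combined with the formula of Remark \ref{rm-der-dual}, this forces the left highest derivative of $Z(b)$ to appear in the single term of the expansion of $\tilde{\ }\Der\tilde{\ }(Z(a_-)) \times \tilde{\ }\Der\tilde{\ }(Z(a_+))$ whose first tensor factor is $Z({}^-\D_1,\dots,{}^-\D_n)$; a cardinality count then pins down the second factor as $Z(\G_1,\dots,\G_{2n-c},{}^-\G_{2n-c+1},\dots,{}^-\G_n)$. Therefore
\[
\mathcal{B}(b) = \{\nu^{A_-+i}\rho : 0 \leq i \leq n-1\} \cup \{\nu^{A_++i}\rho : 2n-c \leq i \leq n-1\},
\]
which agrees with $\mathcal{E}(b)^+$.

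The main (small) obstacle is setting up the left-derivative analog of Lemma \ref{le-hd}; once the symmetric role of $\nu^{A_-}\rho$ as the unique smallest cuspidal in the support is noted, however, this is a direct transcription of the argument already given, so no new ideas are required.
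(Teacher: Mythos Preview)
Your proof is correct, but it takes a genuinely different route from the paper's. The paper exploits the Hermitian self-duality of the induced representation: since $\rho$ is unitarizable, $Z(a_-)^+=Z(a_+)$, so $Z(a_-)\times Z(a_+)$ is Hermitian in $R$ and hence $Z(b^+)\leq Z(a_-)\times Z(a_+)$ as well. Because $b$ and $b^+$ have the same cardinality, Lemma~\ref{le-hd} (applied to both) gives $\C E(b)=\C E(b^+)$, and then the tautology $\C B(b)=\C E(b^+)^+$ finishes the argument in one line. No left-derivative analogue is needed.

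Your approach instead proves that analogue directly, using $\tilde{\ }\Der\tilde{\ }$ and the minimality of $\nu^{A_-}\rho$ in the support, and then compares the two explicit multisets. This is a bit more work, but it has the advantage that it does not rely on the symmetry $Z(a_-)^+=Z(a_+)$; in fact the paper itself remarks, in the concluding ``Expectation'' subsection, that for a product of two \emph{arbitrary} essentially Speh representations one can no longer use Lemma~\ref{le-sym} and should instead use the left derivatives of Remark~\ref{rm-der-dual} --- which is precisely your method. So your argument is less economical here but is the one that survives in the more general setting.
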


\begin{proof} Observe that $Z(b^+)\leq Z(a_-)\times Z(a_+)$ since $\rho$ is unitarizable (the unitarizability implies that the representation on the right hand side is a Hermitian element of the Grothendieck group since $Z(a_-)^+=Z(a_+)$).

Denote by $c(b)$ the cardinality of $b$ (we know $n\leq c(b)\leq 2n$ from the previous proposition). Then  the
last proposition tells us that the multiset of all ends of $b$  depends only on $c(b)$. 

Obviously,  $b$ and $b^+$ have the same cardinality. 
 Therefore, the  multisets of the ends of $b^+$ and of $b$ are the same, i.e. $\C E(b)=\C E(b^+)$. Observe that one gets the multiset of the beginnings of $b$ from the multiset of the ends of $b^+$ applying $\tau\mapsto\tau^+$, i.e. $\C B(b)=\C E(b^+)^+$. Therefore $\C E(b)^+=\C E(b^+)^+= \C B(b)$, i.e. $\tau\mapsto\tau^+ 
$
is a bijection from the multiset of all ends of segments in $b$ onto the multiset of all beginnings of segments in $b$ (it preserves the multisets).
\end{proof}

\subsection{Key  lemma}

The following lemma will be crucial for exhaustion of composition series.

\begin{lemma} 
\label{le-crucial}
Let $Z(b)\leq Z(a_-)\times Z(a_+)$. Suppose that $\rho$ is unitarizable.
\begin{enumerate}

\item There exists a (multi)set $b_{\text{left}}=(\D_1',\dots,\D_n')$ such that
$$
b(\D_i')=b(\D_i),\quad i=1,\dots,n,
$$
$$
e(\D_1)\leq e(\D_1')<\dots<e(\D_n')
$$
and
that each segment of 
$b_{\text{left}}$ is contained in $b$, i.e. $b_{\text{left}}\subseteq b.$

\item There exists a (multi)set $b_{\text{right}}=(\G_1',\dots,\G_n')$ such that
$$
e(\G_i')=e(\G_i),\quad i=1,\dots,n,
$$
$$
 b(\G_1')<\dots<b(\G_n')\leq b(\G_n)
$$
and that 
each segment of 
$b_{\text{right}}$ is contained in $b$, i.e. $b_{\text{right}}\subseteq b$.

\item Suppose that $\C B(a_-)$ and $\C B(a_+)$ are disjoint and that the cardinality $c(b)$ of $b$ is strictly smaller then $2n$. Denote $l=2n-c(b)$ (i.e. $c(b)=2n-l$). Than
\begin{enumerate}
\item \quad $\D_i'=\D_i$, \quad $i=1,\dots, n-l$.

\item \quad $\G_i'=\G_i$, \quad \ $i=l+1,\dots,n$.

\item For $i=1,\dots,l$, $\D_{n-l+i}$ and $\G_{i}$ are disjoint, $\D_{n-l+i}\cup\G_{i}$ is a segment, and 
$$
\D_{n-l+i}'=\D_{n-l+i}\cup \G_{i}=\G_i'.
$$

\item \quad
$
b=(\D_1,\dots,\D_{n-l}, \D_{n-l+1}',\dots.\D_n', \G_{l+1},\dots,\G_n).
$

\item \quad $d\leq k$ (i.e. $B_-<A_+$) and $k+1\leq n+d$.

\item The multiset $b$ as above is unique. Further, $l$ is the maximal index such that $\D_n\cup \G_l\in \SC$.

\item In this situation we have
 $$
 (a_-^t+a_+^t)^t=b.
 $$
 
 \item Further,  $Z(b)$ is the unique irreducible subrepresentation   of $Z(a_-)\t Z(a_+)$ in this situation.
 
\end{enumerate}

\item Suppose that $\C B(a_-)$ and $\C B(a_+)$ are not disjoint (this is equivalent to $A_+\leq C_-$). Further, assume that $n\leq d$ (this is equivalent to $C_-\leq B_-$). Then the cardinality $c(b)$ of $b$ is equal $2n$.

\end{enumerate}
\end{lemma}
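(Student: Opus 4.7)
The plan is to first pin down the multisets $\C B(b)$ and $\C E(b)$ as explicit functions of $l:=2n-c(b)\in\{0,\dots,n\}$, and then to match them combinatorially into segments. Lemma \ref{le-hd} gives $\C E(b)=\{e(\D_i):1\le i\le n-l\}\cup\{e(\G_j):1\le j\le n\}$. Since $\rho$ is unitarizable, Lemma \ref{le-sym} applies, so $\C B(b)=\C E(b)^+$; using the four identities $-B_-=C_+$, $-D_-=A_+$, $-B_+=C_-$, $-D_+=A_-$, a direct computation yields $\C B(b)=\{b(\D_i):1\le i\le n\}\cup\{b(\G_j):l+1\le j\le n\}$.

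For parts (1) and (2), the $n$ distinct values $b(\D_1)<\dots<b(\D_n)$ all appear in $\C B(b)$, so $b$ contains $n$ segments $\D_i'$ with $b(\D_i')=b(\D_i)$, and automatically $e(\D_1')\ge B_-=\min\C E(b)=e(\D_1)$. Strict monotonicity of the ends is obtained by a greedy choice: processing the beginnings in increasing order, at each step one picks the available segment with that beginning and the smallest end strictly greater than the previous one; feasibility follows from the explicit ladder form of $\C B(b)$ and $\C E(b)$ together with the constraint that the $\G_j'$'s (from the symmetric construction applied to $Z(b^+)$) must consume all $n$ ends $e(\G_j)$. Part (2) is then obtained by applying this argument to $Z(b^+)\le Z(a_-)\times Z(a_+)$ and transporting the result via $^+$.

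For part (3), under the disjoint beginnings hypothesis ($k\ge n$) all $\D$-beginnings of $b$ strictly precede all $\G$-beginnings, and the $\D$- and $\G$-ends in $\C E(b)$ are also disjoint (since $k\ge n\ge n-l$). Combining with (1), (2), a layer-by-layer greedy matching forces $\D_i'=\D_i$ for $i\le n-l$ and, dually, $\G_j'=\G_j$ for $j\ge l+1$, giving (a), (b); the remaining $l$ pairs must coincide as combined segments $\D_{n-l+i}'=\G_i'=\D_{n-l+i}\cup\G_i$, giving (c), (d). That this union is a segment requires $k+l\le n+d$, while the empty intersection of $\D_{n-l+i}$ and $\G_i$ (needed so $c(b)=2n-l$ rather than $2n-l+l'$ for larger $l'$) requires $k+l\ge n+d$; hence $l=n+d-k$, uniquely, which (together with $1\le l\le n$) yields (e) and (f). For (g) one runs $\text{MWA}^{\la}$ on $a_-^t+a_+^t=a(d,n)^{(\nu^{-k/2}\rho)}+a(d,n)^{(\nu^{k/2}\rho)}$ and verifies the output matches the explicit form of $b$; then (h) follows from Proposition \ref{prop-sub-quo}.

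For part (4), I argue by contradiction: suppose $l\ge 1$. Then $D_-=e(\D_n)\notin\C E(b)$, so the multiplicity of $D_-$ as an end has decreased along the $\prec$-chain from $a_-+a_+$ to $b$. A $\prec$-step can eliminate an end $v_1$ only through an empty-intersection link, which requires a linked partner with beginning $u_2=v_1+1>v_1$. But under $n\le d$ and $A_+\le C_-$ one computes $C_+-D_-=k-d+1\le n-d\le 0$, so $C_+\le D_-$; since $\prec$-operations never introduce new beginnings, no intermediate multisegment (and in particular $b$) has a beginning exceeding $C_+\le D_-$, making the required step impossible. Hence $c(b)=2n$. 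The main technical obstacle throughout is the rigorous control of the greedy matching in (1) and (3), especially when $k\le n-1$ introduces repetitions in $\C B(b)$ and $\C E(b)$; making the argument airtight rests on the explicit ladder form of these multisets and, ultimately, on the Lapid--M\'inguez formula for $\Der(Z(a(n,d)^{(\rho)}))$ recalled in Section 2.10.
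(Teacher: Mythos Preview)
Your overall plan---pin down $\C B(b)$ and $\C E(b)$ from Lemmas~\ref{le-hd} and~\ref{le-sym}, then match them into segments---is the same as the paper's. Your argument for (4) is different from the paper's and is correct: the paper simply reruns the support-cardinality computation from (3) to force $d\le k$, contradicting $A_+\le C_-\le B_-$; your argument via ``$\prec$-operations never create beginnings exceeding $C_+$'' is a valid alternative (though note your phrasing ``$D_-\notin\C E(b)$'' is wrong, since $D_-=e(\G_{n-k})$; what you mean, and what you use, is that the \emph{multiplicity} of $D_-$ in $\C E(b)$ has dropped).

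There is, however, a genuine gap in (1), (2) and in step (3)(a)--(b). You assert that a ``greedy choice'' produces $b_{\text{left}}$ with strictly increasing ends, and that ``layer-by-layer greedy matching forces $\D_i'=\D_i$ for $i\le n-l$,'' but you never prove feasibility of the greedy step. The paper does \emph{not} argue this way. For (1) and (2) it passes to $b^t\le a_-^t+a_+^t=a(d,n)^{(\nu^{-k/2}\rho)}+a(d,n)^{(\nu^{k/2}\rho)}$; then $b^t$ contains a unique segment with beginning $b(\D_1)$, of length $\ge n$, and running $^\ra$MWA on $b^t$ exhibits $b_{\text{left}}$ directly. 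For (3), the paper's logical order is the reverse of yours: writing $b$ via a permutation $\sigma$ of $\{1,\dots,2n-l\}$, (1) and (2) give only the monotonicity $\sigma(1)<\dots<\sigma(n)$ and $\sigma^{-1}(n-l+1)<\dots<\sigma^{-1}(2n-l)$. The key step is then the \emph{cuspidal-support cardinality} equation $\sum_i(e_{\sigma(i)}-b_i+1)=2nd$, which forces $e_\rho(\D_{n-l+i})+1=b_\rho(\G_i)$ and hence $l=n+d-k$ \emph{before} anything is known about (a)--(d). Only with $l=n+d-k$ in hand does the paper show, by comparing the multiplicity of $\nu^{B_-+i}\rho$ in $\supp(b)$ versus $\supp(a_-+a_+)$, that $\sigma(i)=i$ for $i\le n-l$; combined with the monotonicity of $\sigma$ on $\{n-l+1,\dots,2n-l\}$ this gives $\sigma=\mathrm{id}$, whence (a)--(d). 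Your derivation of $l=n+d-k$ presupposes (a)--(d) (you need $\D'_{n-l+i}=[b(\D_{n-l+i}),e(\G_i)]$ to even formulate the ``union is a segment / intersection is empty'' dichotomy), so as written the argument is circular. The missing ingredients are exactly the MWA argument for (1)--(2) and the support-multiplicity argument that pins down $\sigma$.
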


\begin{proof}  Observe that $Z(b^t)\leq Z(a_-^t)\times Z(a_+^t)=
Z(a(d,n)^{(\nu^{-k/2}\rho)})\t Z(a(d,n)^{(\nu^{k/2}\rho)})$, which implies
$$
b^t\leq a_-^t+a_+^t=
a(d,n)^{(\nu^{-k/2}\rho)}+a(d,n)^{(\nu^{k/2}\rho)}.
$$
This implies that in $b^t$ there is a unique segment $\D$ such that $b(\D)=b(\D_1)$. This segment must have obviously at least $n$ elements. Now $^\ra$MWA implies (1).
Analogously follows (2) using the dual version of MWA$^\la$ (see \cite{T-GL-red}).

Now we shall prove (3). We assume that conditions of (3) on $a_-$ and $a_+$ hold (in particular, disjointness of beginning of segments is equivalent to $C_-<A_+$). Recall that by Lemma \ref{le-hd} the multiset of ends of $b$ consists of all ends of segments $\G_i$, and the ends of the  first $n-l$ ends of segments $\D_i$. Now Lemma \ref{le-sym} implies that the multiset of beginnings of $b$ consists of all beginnings of segments $\D_i$, and the beginnings of the  last $n-l$ ends of segments $\G_i$.

Let $\rho'$ be an element of some $\D_i$ or $\G_j$. Then we can write $\rho'=\nu^x\rho$ for unique $x\in \frac 12\Z$. In this case we write
$$
\exp_\rho(\rho')=x,
$$
and we shall say that $x$ is the exponent of $\rho'$ with respect to $\rho$.

Denote by $X$ the multiset of all exponents with respect to $\rho$ of beginnings of segments in $b$. Write elements of $X$ as
$$
b_1<\dots<b_{2n-l}.
$$
Then by Lemma \ref{le-sym}, $-X$ is the multiset of all exponents with respect to $\rho$ of ends of segments in $b$. Write elements of $-X$ as
$$
e_1<\dots<e_{2n-l}.
$$
Therefore, there exists a permutation $\s$ of $\{1,\dots,2n-l\}$ satisfying
$$
b_i\leq e_{\s(i)},\quad i=1,\dots,2n-l,
$$
 such that
$$
b=([\nu^{b_1}\rho,\nu^{e_{\s(1)}}\rho], \dots ,[\nu^{b_{2n-l}}\rho,\nu^{e_{\s(2n-l)}}\rho])
$$
$$
\hskip50mm
=([\nu^{b_{\s^{-1}(1)}}\rho,\nu^{e_{1}}\rho], \dots ,[\nu^{b_{\s^{-1}(2n-l)}}\rho,\nu^{e_{2n-l}}\rho]).
$$
Now by (1) and (2) we must have
$$
\s(1)<\dots<\s(n) \quad \text{and} \quad \s^{-1}(n-l+1)<\dots <\s^{-1}(2n-l).
$$
This implies 
$$
i\leq \s(i),\quad i=1,\dots,n\qquad \text{and} \qquad \s^{-1}(j)\leq j, \quad j=n-l+1,\dots, 2n-l.
$$

To shorten the  formulas below, for $\D\in\{\D_1,\dots,\D_n,\G_1,\dots,\G_n\}$ we use the following notation:
$$
b_\rho(\D):=\exp_\rho(b(\D)) \text{\quad and \quad} e_\rho(\D):=\exp_\rho(e(\D)).
$$

Since  the   cuspidal supports of $a_-+a_+$ and $b$ must be the same, their cardinalities must be the same. The first cardinality is
$$
2nd=\sum_{i=1}^n (e_\rho(\D_i)-b_\rho(\D_i)+1)+\sum_{i=1}^n (e_\rho(\G_i)-b_\rho(\G_i)+1)
$$
From the other side, using the fact that $b_i\leq e_{\s(i)},\quad i=1,\dots,2n-l$,  the second cardinality is
$$
\sum_{i=1}^{2n-l} (e_{\s(i)}-b_i+1)=\sum_{i=1}^{2n-l} (e_{\s(i)}+1) -\sum_{i=1}^{2n-l} b_i=
\sum_{i=1}^{2n-l} (e_{i}+1) -\sum_{i=1}^{2n-l} b_i
$$
$$
=\sum_{i=1}^{n-l} (e_\rho(\D_i)-b_\rho(\D_i)+1)+\sum_{i=l+1}^n (e_\rho(\G_i)-b_\rho(\G_i)+1)
+
\sum_{i=n-l+1}^n (e_\rho(\G_{i-(n-l)})-b_\rho(\D_{i})+1).
$$ 
Since the above two cardinalities must be the same, we have
$$
\sum_{i=n-l+1}^n (e_\rho(\D_i)-b_\rho(\D_i)+1)+\sum_{i=1}^l (e_\rho(\G_i)-b_\rho(\G_i)+1)
=
\sum_{i=n-l+1}^n (e_\rho(\G_{i-(n-l)})-b_\rho(\D_{i})+1).
$$
This further implies 
$$
\sum_{i=n-l+1}^n e_\rho(\D_i)+\sum_{i=1}^l (e_\rho(\G_i)-b_\rho(\G_i)+1)
=
\sum_{i=n-l+1}^n e_\rho(\G_{i-(n-l)})
=
\sum_{i=1}^l e_\rho(\G_{i}).
$$
Thus 
$$
\sum_{i=n-l+1}^n e_\rho(\D_i)+\sum_{i=1}^l (-b_\rho(\G_i)+1)
=
0,
$$
i.e.
$$
\sum_{i=1}^l e_\rho(\D_{n-l+i})+\sum_{i=1}^l (-b_\rho(\G_i)+1)
=
\sum_{i=1}^l (e_\rho(\D_{n-l+i})-b_\rho(\G_i)+1)
=
0.
$$
Since all the numbers 
$e_\rho(\D_{n-l+i})-b_\rho(\G_i)+1$, $i=1,\dots,l$ are the same, we get that
\begin{equation}
\label{contact}
e_\rho(\D_{n-l+i})+1=b_\rho(\G_i), \quad i=1,\dots,l.
\end{equation}
For $i=l$ we get $e_\rho(\D_n)+1=b_\rho(\G_l)$, i.e. $D_-+1=A_++l-1$. Going to $n,d,k$-notation, we get $\frac{d-1}2+\frac{n-1}2-\frac k2+1=- \frac{d-1}2-\frac{n-1}2+\frac k2+l-1$, which gives
 $d+n= k +l$. This implies
$$
l=n+d-k.
$$
Recall $1\leq l\leq n$. Therefore
$$
k+1\leq n+d, 
$$
$$
d\leq k.
$$
The last inequality is equivalent to $B_-<A_+$.

We illustrate the situation by the following example:
\begin{equation} 
\label{eq-ex-1}
\xymatrix@C=.6pc@R=.1pc
{ 
 \bullet& \bullet\ar @{-}[l] & \bullet\ar @{-}[l] & \bullet\ar @{-}[l] & \bullet\ar @{-}[l]  
\\
&\bullet& \bullet\ar @{-}[l] & \bullet\ar @{-}[l] & \bullet\ar @{-}[l] & \bullet\ar @{-}[l]  
&
\circ& \circ\ar @{-}[l] & \circ\ar @{-}[l] & \circ\ar @{-}[l] & \circ\ar @{-}[l] 
\\
&&\bullet& \bullet\ar @{-}[l] & \bullet\ar @{-}[l] & \bullet\ar @{-}[l] & \bullet\ar @{-}[l]  
&
\circ& \circ\ar @{-}[l] & \circ\ar @{-}[l] & \circ\ar @{-}[l] & \circ\ar @{-}[l] 
\\
&&&&&&&&\circ& \circ\ar @{-}[l] & \circ\ar @{-}[l] & \circ\ar @{-}[l] & \circ\ar @{-}[l]  
} 
\end{equation}

Our next aim will be to prove that the permutation $\s$ is the identity permutation.

 In the following considerations, we shall use the fact that the cuspidal supports of $b$ and $a_-+a_+$ must coincide (in the previous considerations we have  used only the fact that their cardinalities   must be the same).
 
Consider the case $B_-+1<A_+.$ Suppose $\s(1)\ne1$. Then $1<\s(1)$. Recall $i\leq \s(i)$, $1\leq i\leq n$. This implies that the multiplicity of $\nu^{B_-+1}\rho$ in the cuspidal support of $b$ is strictly bigger then the multiplicity in the cuspidal support of $a_-+a_+$. Thus, $\s(1)=1$. In the same way we get $\s(2)=2$ if $B_-+2<A_+$. Continuing in this way,  we get that
$$
\s(i)=i, \quad \text{if} \quad B_-+i<A_+.
$$
We shall now find the maximal $i$ such that $B_-+i<A_+$. In $n,d,k$-notation, this condition becomes $\frac{d-1}2-\frac{n-1}2-\frac k2+i<- \frac{d-1}2-\frac{n-1}2+\frac k2$, i.e.
$d-1+i<k$, which is equivalent to $d+i\leq k$. Therefore
\begin{equation}
\label{eq-2-1}
\s(i)=i \quad \text{for all}\quad 1\leq i \leq k-d=n-l,
\end{equation}
since we know $l=n+d-k$.

This implies that $\s$ carries $\{n-l+1,\dots,2n-l\}$ into itself. Since $\s^{-1}$ is monotone on this set, we get that $\s$ is the identity permutation. This completes the proof of (a) - (d) in (3). 

It remains to prove (g). Because of \eqref{t}, it is enough to prove
$$
(a(d,n)^{(\nu^{-k/2}\rho)}+a(d,n)^{(\nu^{k/2}\rho)})^t=b.
$$
The proof of this relation is a simple straight-forward application of MWA$^\la$, and we shall not present in detail here. Rather we shall illustrate the proof with an example \eqref{eq-ex-1}. We illustrate $a_-^t+a_+^t$ by the drawing
\begin{equation} 
\label{eq-ex-2}
\xymatrix@C=.6pc@R=.1pc
{ 
 \bullet& \bullet & \bullet& \bullet& \bullet  
\\
&\bullet\ar @{-}[lu] & \bullet\ar @{-}[lu] & \bullet\ar @{-}[lu] & \bullet\ar @{-}[lu] & \bullet\ar @{-}[ul]  
&
\circ& \circ& \circ & \circ & \circ
\\
&&\bullet\ar @{-}[ul] & \bullet\ar @{-}[ul] & \bullet\ar @{-}[lu] & \bullet\ar @{-}[lu] & \bullet\ar @{-}[ul]  
&
\circ\ar @{-}[lu] & \circ\ar @{-}[ul] & \circ\ar @{-}[ul] & \circ\ar @{-}[lu] & \circ\ar @{-}[ul] 
\\
&&&&&&&&\circ\ar @{-}[ul] & \circ\ar @{-}[ul] & \circ\ar @{-}[ul] & \circ\ar @{-}[ul] & \circ\ar @{-}[lu]  
} 
\end{equation}
Now MWA$^\la$ applied to $a_-^t+a_+^t$ can be illustrated by:
\begin{equation} 
\label{eq-ex-3}
\xymatrix@C=.6pc@R=.1pc
{ 
 \bullet& \bullet\ar [l] & \bullet\ar[l] & \bullet\ar [l] & \bullet\ar [l]  
\\
&\bullet& \bullet\ar [l] & \bullet\ar [l] & \bullet\ar [l] & \bullet\ar [l]  
&
\circ\ar[l]& \circ\ar [l] & \circ\ar [l] & \circ\ar [l] & \circ\ar [l] 
\\
&&\bullet& \bullet\ar [l] & \bullet\ar [l] & \bullet\ar [l] & \bullet\ar [l]  
&
\circ\ar[l]& \circ\ar [l] & \circ\ar [l] & \circ\ar [l] & \circ\ar [l] 
\\
&&&&&&&&\circ& \circ\ar [l] & \circ\ar [l] & \circ\ar [l] & \circ\ar [l]  
} 
\end{equation}
The general proof goes in the same way.

The claim (h) follows from Proposition \ref{prop-sub-quo} and (g) (we  use this in the proof of Theorem \ref{th-diff}).

It remains to  prove (4). The proof is a simple modification of the proof of (3). We assume that conditions of (3) on $a_-$ and $a_+$ hold (in particular, $A_+\leq C_-$, which means that the beginnings of segments are not disjoint). 
Suppose $c(b)<n$. Write $c(b)=2n-l$. Then
$$
1\leq l\leq n.
$$
Write 
$$
b=(\D_1',\dots,\D_{2n-l}').
$$

Since  the cuspidal supports of $a_-+a_+$ and $b$ must be the same, their cardinalities must be the same. The first cardinality is
$$
2nd=\sum_{i=1}^n (e_\rho(\D_i)-b_\rho(\D_i)+1)+\sum_{i=1}^n (e_\rho(\G_i)-b_\rho(\G_i)+1)
$$
From the other side, using the fact that $b_\rho(\D_i')\leq e_\rho(\D_i'),\quad i=1,\dots,2n-l$,  the second cardinality is
\begin{equation}
\label{e}
\sum_{i=1}^{2n-l} (e_\rho(\D_i') - b_\rho(\D_i')+1)=\sum_{i=1}^{2n-l} (e_\rho(\D_i') +1)-\sum_{i=1}^{2n-l}b_\rho(\D_i').
\end{equation}
The multiset of ends of $b$ consists of all ends of segments $\G_i$, and the ends of the  first $n-l$ ends of segments $\D_i$. Further, the multiset of beginnings of $b$ consists of all beginnings of segments $\D_i$, and the beginnings of the  last $n-l$ ends of segments $\G_i$. Using this fact, and permuting elements in the sums, we easily get that we can write \eqref{e} as
$$
\sum_{i=1}^{n-l} (e_\rho(\D_i)-b_\rho(\D_i)+1)+\sum_{i=l+1}^n (e_\rho(\G_i)-b_\rho(\G_i)+1)
+
\sum_{i=n-l+1}^n (e_\rho(\G_{i-(n-l)})-b_\rho(\D_{i})+1).
$$ 
The above two cardinalities must be the same, which implies
$$
\sum_{i=n-l+1}^n (e_\rho(\D_i)-b_\rho(\D_i)+1)+\sum_{i=1}^l (e_\rho(\G_i)-b_\rho(\G_i)+1)
=
\sum_{i=n-l+1}^n (e_\rho(\G_{i-(n-l)})-b_\rho(\D_{i})+1).
$$
This further implies 
$$
\sum_{i=n-l+1}^n e_\rho(\D_i)+\sum_{i=1}^l (e_\rho(\G_i)-b_\rho(\G_i)+1)
=
\sum_{i=n-l+1}^n e_\rho(\G_{i-(n-l)})
=
\sum_{i=1}^l e_\rho(\G_{i}).
$$
Thus 
$$
\sum_{i=n-l+1}^n e(\D_i)+\sum_{i=1}^l (-b_\rho(\G_i)+1)
=
0,
$$
i.e.
$$
\sum_{i=1}^l e_\rho(\D_{n-l+i})+\sum_{i=1}^l (-b_\rho(\G_i)+1)
=
\sum_{i=1}^l (e_\rho(\D_{n-l+i})-b_\rho(\G_i)+1)
=
0.
$$
Since all the numbers 
$e_\rho(\D_{n-l+i})-b_\rho(\G_i)+1$, $i=1,\dots,l$, are the same, we get that
\begin{equation}
\label{contact+}
e_\rho(\D_{n-l+i})+1=b_\rho(\G_i), \quad i=1,\dots,l.
\end{equation}
For $i=l$ we get $e_\rho(\D_n)+1=b_\rho(\G_l)$, i.e. $D_-+1=A_++l-1$. Going to $n,d,k$-notation, we get $\frac{d-1}2+\frac{n-1}2-\frac k2+1=- \frac{d-1}2-\frac{n-1}2+\frac k2+l-1$, which gives
 $d+n= k +l$. This implies
$$
l=n+d-k.
$$
Recall $1\leq l\leq n$. Therefore
$$
k+1\leq n+d, 
$$
$$
d\leq k.
$$
The last inequality is equivalent to $B_-<A_+$.

From the other side,  we suppose $A_+\leq C_-$ and $C_-\leq B_-$, which implies $A_+\leq B_-$. This contradicts to $B_-<A_+$.

The proof is now complete.
\end{proof}

\section{Definition of multisegments representing composition series}

Denote
$$
r_0(n,d)^{(\rho)}_k=(\D_{1},\dots,\D_n,\G_1,\dots,\G_n).
$$
For $1\leq j\leq n$ we define multisegments
$$
r_j(n,d)^{(\rho)}_k
$$
whenever $\D_n\ra \G_j$. Observe that $\D_{n}\ra \G_{j}$ 
if and only if $\D_{n-j+1}\ra \G_1$. This is the case
 if and only if 
 $$
 1\leq b_\rho(\G_1)- b_\rho(\D_{n-j+1})\leq d.
 $$
The last  condition becomes 
$1\leq (-\frac{d-1}2-\frac{n-1}2+\frac k2) -( -\frac{d-1}2-\frac{n-1}2-\frac k2+n-j)\leq d$. Therefore if $j\ne0$, the  multisegments $r_j(n,d)^{(\rho)}_k$ are defined for indexes
$$
\max( n-k+1,1)\leq j \leq \min(n-k+d,n).
$$

Then we get $r_j(n,d)^{(\rho)}_k$ by replacing
in
$$
r_0(n,d)^{(\rho)}_k=(\D_{1},\dots,\D_n,\G_1,\dots,\G_n)
$$
the part
$$
\D_{n-j+1},\dots,
\D_n,\G_1,
\dots,\G_j
$$
with
$$
\D_{n-j+1}\cup \G_1,\D_{n-j+1}\cap \G_1,
\dots\dots,
\D_{n}\cup\G_j,\D_n\cap\G_j
$$
(we omit  $\emptyset$ if it shows up in the above formula and the formulas below).

In other words we have
$$
r_j(n,d)_{(k)}^{(\rho)}=\sum_{i=1}^j(\D_{i+n-j}\cup \G_i, \D_{i+n-j}\cap \G_i)+\sum_{i=1}^{n-j}(\D_i,\G_{n-i+1})
$$
$$
\hskip22mm=\sum_{i=1}^j(\D_{i+n-j}\cup \G_i, \D_{i+n-j}\cap \G_i)+\sum_{i=1}^{n-j}(\D_i,\G_{i+j})
$$
for $j=0$ and for  $1\leq j\leq n$ for which $\D_n\ra \G_j$.

Suppose that $\D$ and $\G$ are segments such that their intersection is non-empty. Then obviously
 $$
 ((\D\cup\G)^-,(\D\cap\G)^-)=(\D^-\cup\G^-,\D^-\cap\G^-)
 $$
(as above, we omit  $\emptyset$ if it shows up). This implies that 
\begin{equation}
\label{der-r}
\text{h.d.}(Z(r_j(n,d)_{(k)}^{(\rho)}))=\nu^{-1/2}Z(r_j(n,d-1)_{(k)}^{(\rho)})
\end{equation}
if  $d\geq2$ and $\D_n\ra\G_j$ such that $\D_n\cap\G_j\ne \emptyset$.

\section{Composition series - disjoint beginnings of segments}

We continue with the notation of the previous section. In this section we shall assume that
$$
C_-<A_+,
$$
i.e. $\{b(\D_1),\dots,b(\D_n)\}\cap \{b(\G_1),\dots,b(\G_n)\}=\emptyset.$ We also assume
$$
A_+\leq D_-+1
$$
(recall that for $A_+> D_-+1$ the representation ${\C R}(n,d)_{(k)}^{(\rho)}$ is irreducible). 

In the $n,d,k$-notation these two conditions become
$-\frac{d-1}2+\frac{n-1}2-\frac k2 < -\frac{d-1}2-\frac{n-1}2+\frac k2$ and 
$-\frac{d-1}2-\frac{n-1}2+\frac k2 \leq \frac{d-1}2+\frac{n-1}2-\frac k2+1$, i.e.
$$
n\leq k,
$$
$$
k\leq n+d-1
$$
(i.e. $k<n+d$).

Let fix $j$ such that $\D_n\ra\G_j$. 
This is the case if and only if 
 $$
 A_++j-1\leq D_-+1.
 $$
  The last condition becomes 
$-\frac{d-1}2-\frac{n-1}2+\frac k2+j-1 \leq \frac{d-1}2+\frac{n-1}2-\frac k2+1$, i.e.  $k+j\leq d+n$. Therefore, the multisegments $r_j(n,d)_{(k)}^{(\rho)}$ are defined for indexes
$$
0\leq j \leq \min(n+d-k,n)
$$
in the case that we consider in this section.

Observe that in the case $k\leq d$, $r_j(n,d)_{(k)}^{(\rho)}$ are defined for all $0\leq j\leq n$, and then we have
$$
r_0(n,d)_{(k)}^{(\rho)}=\sum_{i=1}^{n}(\D_i,\G_{n-i+1}).
$$
$$
r_1(n,d)_{(k)}^{(\rho)}=(\D_n\cup \G_1, \D_n\cap \G_1)+\sum_{i=1}^{n-1}(\D_i,\G_{i+1})
$$
\hskip40mm \dots \dots \dots \dots \dots \dots \dots \dots \dots \dots \dots \dots \dots \dots \dots \dots 
$$
r_j(n,d)_{(k)}^{(\rho)}=\sum_{i=1}^j(\D_{i+n-j}\cup \G_i, \D_{i+n-j}\cap \G_i)+\sum_{i=1}^{n-j}(\D_i,\G_{i+j}).
$$
\hskip40mm \dots \dots \dots \dots \dots \dots \dots \dots \dots \dots \dots \dots \dots \dots \dots \dots
$$
r_n(n,d)_{(k)}^{(\rho)}=\sum_{i=1}^n(\D_i\cup \G_i, \D_i\cap \G_i).
$$
In the case $d<k$, let $j$ is the greatest index for which $\D_n\cup \G_j$ is still a segment (with the assumptions of the present section, this implies $\D_n\ra \G_j$).
Then only multisegments $r_0(n,d)_{(k)}^{(\rho)}, \dots, r_j(n,d)_{(k)}^{(\rho)}$ are defined (i.e. the first $j+1$ terms above are defined).

\begin{proposition}
\label{prop-diff}
 Let $A_+\leq D_-+2$, i.e.
$$
k\leq n+d
$$
(otherwise, ${\C R}(n,d)_{(k)}^{(\rho)}$
 is irreducible\footnote{Observe that also for $k=n+d$ we have irreducibility.}). Suppose $C_-<A_+$, i.e.
$$
n\leq k.
$$
Then

\begin{enumerate} \item ${\C R}(n,d)_{(k)}^{(\rho)}$ is a multiplicity one representation.

\item  Let $B_-<A_+$. In that case
$$
d\leq k\leq n+d.
$$
Then ${\C R}(n,d)_{(k)}^{(\rho)}$ has length $n+d+1-k$, and its composition series consists of 
$$
Z(r_i(n,d)_{(k)}^{(\rho)}), \quad 0\leq i \leq n+d-k.
$$

\item Suppose $A_+\leq B_-$, i.e.
$$
k <d    .
$$
Then ${\C R}(n,d)_{(k)}^{(\rho)}$ has length $n+1$, and its composition series consists of 
$$
Z(r_i(n,d)_{(k)}^{(\rho)}), \quad 0\leq i \leq n.
$$
\end{enumerate}
\end{proposition}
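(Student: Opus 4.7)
The plan is to combine an exhaustion step (via Lemma~\ref{le-crucial}) with an induction on $d$ based on highest derivatives. For the exhaustion step, I would show that any irreducible subquotient $Z(b)\leq \C R(n,d)_{(k)}^{(\rho)}$ must be of the form $Z(r_j(n,d)_{(k)}^{(\rho)})$ for some admissible $j$. Lemma~\ref{le-hd} gives $n\leq c(b)\leq 2n$. If $c(b)<2n$, then under the standing hypothesis $C_-<A_+$ of this section, Lemma~\ref{le-crucial}(3) forces $b$ uniquely, and the parameter $l$ computed there equals $n+d-k$; this is admissible only when $d\leq k$, i.e.\ case~(2), giving $b=r_{n+d-k}(n,d)_{(k)}^{(\rho)}$. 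If instead $c(b)=2n$, Lemmas~\ref{le-hd} and~\ref{le-sym} give $\C E(b)=\C E(a_-+a_+)$ and $\C B(b)=\C B(a_-+a_+)$, and the monotonicity conditions in parts~(1) and~(2) of Lemma~\ref{le-crucial}, combined with $b\leq a_-+a_+$, force the pairing of beginnings to ends to be exactly that prescribed by some $r_j$ with nonempty intersections.

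For the induction on $d$ I would use the identity $\text{h.d.}(\C R(n,d)_{(k)}^{(\rho)})=\C R(n,d-1)_{(k)}^{(\nu^{-1/2}\rho)}$ together with~\eqref{der-r}, which gives $\text{h.d.}(Z(r_j(n,d)_{(k)}^{(\rho)}))=Z(r_j(n,d-1)_{(k)}^{(\nu^{-1/2}\rho)})$ whenever $\D_n\cap\G_j\ne\emptyset$. Case~(2) is stable under $d\mapsto d-1$, and case~(3) either remains in case~(3) or transitions to case~(2) at $d-1=k$; in either situation the inductive hypothesis applies at level $d-1$. The reconstruction principle recalled in subsection~2.10 --- an irreducible subquotient of $Z(a_-)\t Z(a_+)$ with maximal cardinality $c=2n$ is determined, with multiplicity, by its highest derivative --- then transfers both the occurrence and the multiplicity one of each $Z(r_j(n,d)_{(k)}^{(\rho)})$ with $c(r_j)=2n$ from the known composition series at level $d-1$. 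The base case $d=1$ reduces to $k=n$ (otherwise the representation is irreducible by the discussion preceding the proposition), which is the classical length-two end of the complementary series and also follows from Proposition~\ref{prop-sub-quo} together with the exhaustion above.

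The delicate case is the boundary factor $r_{n+d-k}(n,d)_{(k)}^{(\rho)}$ in case~(2): it has cardinality $n+k-d<2n$, and because $\D_n\cap\G_{n+d-k}=\emptyset$ formula~\eqref{der-r} does not apply; its highest derivative lives in a strictly higher rank than $\C R(n,d-1)_{(k)}^{(\nu^{-1/2}\rho)}$, so the reconstruction argument above misses it. Here I would invoke Proposition~\ref{prop-sub-quo} together with Lemma~\ref{le-crucial}(3)(g),(h): the proposition identifies the unique irreducible subrepresentation of $\C R(n,d)_{(k)}^{(\rho)}$ and asserts that it appears with multiplicity exactly one in the whole representation, while Lemma~\ref{le-crucial}(3)(g) pins this subrepresentation down precisely as $Z(r_{n+d-k}(n,d)_{(k)}^{(\rho)})$. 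Assembling the three ingredients --- exhaustion of candidates, the inductive reconstruction of the $c(b)=2n$ factors, and this direct identification of the boundary factor --- yields composition series of lengths $n+d+1-k$ in case~(2) and $n+1$ in case~(3), each irreducible appearing with multiplicity one.
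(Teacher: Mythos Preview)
Your proposal is correct and follows essentially the same route as the paper: induction on $d$ via highest derivatives, the reconstruction principle of \S2.10 for the $c(b)=2n$ factors, Lemma~\ref{le-crucial}(3) for the (unique) $c(b)<2n$ factor in case~(2), and Proposition~\ref{prop-sub-quo} for multiplicity one of that boundary factor.

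Two remarks. First, your ``direct exhaustion'' claim for $c(b)=2n$ --- that the monotonicity from Lemma~\ref{le-crucial}(1),(2) together with $b\leq a_-+a_+$ already pins $b$ down to some $r_j$ --- is asserted but not argued, and it is not obvious (the paper does not attempt it). Fortunately it is also unnecessary: the reconstruction principle you invoke as ingredient~(2) gives not merely occurrence but the \emph{complete} list of $c(b)=2n$ subquotients together with their multiplicities, once the composition series at level $d-1$ is known. The paper's proof uses exactly this, and never needs a separate exhaustion step for the $c=2n$ stratum; you may simply delete that step. Second, Lemma~\ref{le-crucial} (and Lemma~\ref{le-sym}) are stated under the hypothesis that $\rho$ is unitarizable; the paper opens the proof by observing that a twist $\rho\mapsto\nu^\alpha\rho$ carries $\C R(n,d)_{(k)}^{(\rho)}$ and each $r_i(n,d)_{(k)}^{(\rho)}$ to the corresponding objects for $\nu^\alpha\rho$, so one may assume $\rho\in\C C^u$ throughout. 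You should insert this reduction before invoking those lemmas. Your handling of the base of the induction (the irreducible case $k=n+d$, and $d=1$ with $k=n$) matches the paper's.
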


\begin{proof} One can easily see that if any of the claims of the above proposition holds for a unitarizable $
\rho$, then it holds for any $\nu^\a\rho, \a \in \mathbb R$ (for that $n,d$ and $k$). This follows from the fact that in general holds $\nu^\a Z(a)\cong Z(\nu^\a a)$, $\nu^\a(\pi_1\t\pi_2)\cong (\nu^\a \pi_1)\t(\nu^\a\pi_2)$, $\nu^\a({\C R}(n,d)_{(k)}^{(\rho)})\cong {\C R}(n,d)_{(k)}^{(\nu^\a\rho)}$ and $\nu^\a(r_i(n,d)_{(k)}^{(\rho)})\cong r_i(n,d)_{(k)}^{(\nu^\a\rho)}$.

We shall first prove (2). While proving (2), we shall prove also that all the multiplicities are one.

We fix  $n$ and $k$. The proof will go by induction with respect to $d$. Observe that $k-n\leq d \leq k$.
For $d=k-n$ (i.e. $k=n+d$), we know that ${\C R}(n,d)_{(k)}^{(\rho)}$ is irreducible, and isomorphic to $Z(r_0(n,d)_{(k)}^{(\rho)})$. Therefore, we have the basis of the induction.

Suppose $d=1$. Then $k\leq n+1$ and $n\leq k$. If $k=n+1$, we have observed above that (2) holds. Let $k=n$. Then easily follows from \cite{Z} that (2), together with multiplicity one, holds. Now in the rest of the proof, it is enough to consider the case of $d>1$.

 Let
$$
k-n< d\leq k,
$$
and suppose that (2) holds for  $d-1$, together with the multiplicity one claim (in (1)). 
We shall now show that (2) holds also for $d$ (together with the multiplicity one claim). 
Recall
\begin{equation}
\label{hd-r}
\text{h.d.}({\C R}(n,d)_{(k)}^{(\rho)})=\nu^{-1/2}{\C R}(n,d-1)_{(k)}^{(\rho)}
\end{equation}
for $d\geq 2$.
Also
\begin{equation}
\label{hd-z}
\text{h.d.}(Z(r_j(n,d)_{(k)}^{(\rho)}))=\nu^{-1/2}Z(r_j(n,d-1)_{(k)}^{(\rho)})
\end{equation}
for  
$$
0\leq j\leq n+d-k-1.
$$
From the first relation and the inductive assumption follows that ${\C R}(n,d)_{(k)}^{(\rho)}$ has precisely $n+d-k$ irreducible subquotients $\pi=Z(a)$ such that the cardinality of $a$ is $2n$. It also implies that all these subquotients have multiplicity one.
The inductive assumption tells us that the irreducible subquotients of the highest  derivative  are $\nu^{-1/2}Z(r_j(n,d-1)_{(k)}^{(\rho)})$, $0\leq j \leq n+(d-1)-k$.
 Now the second relation above implies that these subquotients $Z(a)$ are representations $Z(r_j(n,d)_{(k)}^{(\rho)})$, $0\leq j \leq n+(d-1)-k$. Now Proposition \ref{prop-sub-quo} and Lemma \ref{le-crucial} (3) (g) imply also that $Z(r_{n+d-k}(n,d)_{(k)}^{(\rho)})$ is irreducible subquotient, and that the multiplicity of this subquotient  is one. It remains to prove that these are all the irreducible subquotients.
Let $\pi=Z(a)$ be an arbitrary irreducible subquotient of ${\C R}(n,d)_{(k)}^{(\rho)}$. If the cardinality of $a$ is $2n$, then we have seen that it must be one of $Z(r_j(n,d)_{(k)}^{(\rho)})$, $0\leq j \leq n+(d-1)-k$. Suppose that the cardinality of $a$ is strictly smaller then $2n$. 
Now we shall find an index $l$ such that $e_\rho(\D_n)+1=b_\rho(\G_l)$, i.e. $\frac{n-1}2+\frac{d-1}2-\frac{k}2 +1=-\frac{n-1}2-\frac{d-1}2+\frac{k}2 +l -1$, which implies
$$
\ell =n+d-k.
$$
Then (3) of Lemma \ref{le-crucial} implies that $\pi\cong Z(r_{n+d-k}(n,d)_{(k)}^{(\rho)})$. This completes the proof of (2).

Now we shall prove (3).  At the same time we shall prove also that all the multiplicities are one. This and the first part of the proof, will then give (1).

We  prove (3) by induction with respect to $d$. Observe that we need to prove for $k<d$. Actually, we shall prove the claim of (3) (together with multiplicity one property) by induction for
$$
k\leq d.
$$
For $d=k$, claim (3) holds by (2) (observe that in this case $n+d-k$ is $n$, and we have $n+1$ irreducible subquotients). Also the multiplicity one holds in this case.

Let $k<d$, and suppose that our claim holds for $d-1$. Then in the same way as in the first part of the proof, looking at the highest derivative of ${\C R}(n,d)_{(k)}^{(\rho)}$, the inductive assumption implies that $Z(r_j(n,d)_{(k)}^{(\rho)})$, $0\leq j \leq n$ are subquotients, and all have multiplicity one. Further, (4) of Lemma \ref{le-crucial} and the condition $A_+\leq B_-$ (which we assume in (3)) imply that these are all the irreducible subquotients.

The proof of the proposition is now complete.
\end{proof}

Summing up, we get the following:

\begin{theorem}
\label{th-diff}
 Let $k\in\Z_{\geq 0}$. 
\begin{enumerate}

\item The representations   ${\C R}(n,d)_{(k)}^{(\rho)}$ and   ${\C R}(n,d)_{(-k)}^{(\rho)}$ have the same composition series.
\item
For 
$$
 n+d\leq k
 ,
 $$
   ${\C R}(n,d)_{(k)}^{(\rho)}$ is irreducible, and ${\C R}(n,d)_{(k)}^{(\rho)}\cong Z(r_0(n,d)_{(k)}^{(\rho)})$.

\item
 For 
 $$
n\leq k\leq n+d,
$$
 ${\C R}(n,d)_{(k)}^{(\rho)}$ is a multiplicity one representation. Its composition series consists of
$$
Z(r_i(n,d)_{(k)}^{(\rho)}), \quad 0\leq i \leq \min(n,n+d-k). 
$$

\item The representation ${\C R}(n,d)_{(k)}^{(\rho)}$ has a unique irreducible subrepresentation as well as a unique irreducible quotient. The irreducible quotient  is isomorphic to $Z(r_0(n,d)_{(k)}^{(\rho)})$, and the irreducible subrepresentation  is isomorphic to $Z(r_{\min(n,n+d-k)}(n,d)_{(k)}^{(\rho)}).$ 

For ${\C R}(n,d)_{(-k)}^{(\rho)}$, we have opposite situation regarding irreducible subrepresentation and quotient.
\qed
\end{enumerate}
\end{theorem}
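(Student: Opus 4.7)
The plan is to assemble Theorem \ref{th-diff} as a consolidation of the material proved earlier in this section, dispatching the four items in sequence. Claim (1) is immediate from the equality $\text{s.s.}(\C R(n,d)_{(k)}^{(\rho)})=\text{s.s.}(\C R(n,d)_{(-k)}^{(\rho)})$ already recorded in Section~3.1. For claim (2), the condition $n+d\leq k$ is equivalent to $D_-+2\leq A_+$, which forces irreducibility by Proposition~8.5 of \cite{Z} as noted earlier in the section. The identification with $Z(r_0(n,d)_{(k)}^{(\rho)})=Z(a_-+a_+)$ then follows because $Z(a_-+a_+)$ is always a subquotient of $Z(a_-)\t Z(a_+)=\C R(n,d)_{(k)}^{(\rho)}$ (by Proposition~8.4 of \cite{Z}, cf.\ \eqref{cs}), so in the irreducible case the two must coincide.

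Claim (3) is just the union of Proposition~\ref{prop-diff} parts (2) and (3). These two parts partition the range $n\leq k\leq n+d$ into the subcase $d\leq k\leq n+d$ (equivalent to $B_-<A_+$), giving composition series $\{Z(r_i(n,d)_{(k)}^{(\rho)})\}_{0\leq i\leq n+d-k}$ of length $n+d+1-k$, and the subcase $n\leq k<d$ (equivalent to $A_+\leq B_-$), giving $\{Z(r_i(n,d)_{(k)}^{(\rho)})\}_{0\leq i\leq n}$ of length $n+1$. In both subcases the upper index equals $\min(n,\,n+d-k)$, so the two statements unify, and multiplicity one is part of Proposition~\ref{prop-diff}.

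For claim (4), Proposition~\ref{prop-sub-quo} directly furnishes the existence and uniqueness of the irreducible subrepresentation and quotient, and identifies them as $Z((a_-^t+a_+^t)^t)$ and $Z(a_-+a_+)=Z(r_0(n,d)_{(k)}^{(\rho)})$ respectively; the statement for $\C R(n,d)_{(-k)}^{(\rho)}$ is exactly the last sentence of that proposition. The remaining task is thus the identity
\[
(a_-^t+a_+^t)^t \;=\; r_{\min(n,\,n+d-k)}(n,d)_{(k)}^{(\rho)}.
\]
In the subcase $d\leq k\leq n+d-1$ this is delivered by Lemma~\ref{le-crucial}(3)(g): that part already gives $(a_-^t+a_+^t)^t=b$ with $b$ the explicit multisegment of parts (a)--(d) for $l=n+d-k$, and a direct comparison with the definition of $r_{n+d-k}(n,d)_{(k)}^{(\rho)}$ --- noting that $B_-<A_+$ makes the intersections $\D_{n-l+i}\cap\G_i$ empty --- shows the two multisets agree. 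In the subcase $n\leq k<d$, Lemma~\ref{le-crucial}(4) tells us all composition factors have cardinality $2n$, so by claim~(3) the subrepresentation is one of the $Z(r_i)$, $0\leq i\leq n$; to pin down $i=n$ I would run $\text{MWA}^\la$ on $a_-^t+a_+^t=a(d,n)^{(\nu^{-k/2}\rho)}+a(d,n)^{(\nu^{k/2}\rho)}$ and verify that its successive stages produce $n$ long segments of length $d+k$ (which realize the unions $\D_i\cup\G_i$) followed by $n$ shorter segments of length $d-k$ (which realize the intersections $\D_i\cap\G_i$), so that the output multiset is exactly $r_n(n,d)_{(k)}^{(\rho)}$. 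The main technical obstacle is this last $\text{MWA}^\la$ calculation in the overlap case; it is routine but requires careful tracking of how the ends coming from the two copies $a(d,n)^{(\nu^{\pm k/2}\rho)}$ interleave, and this is essentially the same bookkeeping already illustrated by the drawings \eqref{eq-ex-2}--\eqref{eq-ex-3} in the proof of Lemma~\ref{le-crucial}(3)(g).
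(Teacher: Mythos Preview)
Your proposal is essentially correct and follows the same consolidation strategy as the paper: (1) from the semisimplification equality in Section~3.1, (2) from the irreducibility criterion, (3) directly from Proposition~\ref{prop-diff}, and (4) from Proposition~\ref{prop-sub-quo} together with an explicit identification of $(a_-^t+a_+^t)^t$.

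One small slip: in the subcase $n\leq k<d$ you invoke Lemma~\ref{le-crucial}(4), but its hypothesis is $A_+\leq C_-$, i.e.\ $k<n$, which is the \emph{opposite} of the regime $n\leq k$ under consideration here. The conclusion you want (that all composition factors have cardinality $2n$) is nonetheless true and already follows from claim~(3), since for $k<d$ each $r_i(n,d)_{(k)}^{(\rho)}$ with $0\leq i\leq n$ has all intersections $\D_{n-i+j}\cap\G_j$ nonempty; so the misquotation is harmless and in fact the whole step is redundant---the subrepresentation is already known to be $Z((a_-^t+a_+^t)^t)$ by Proposition~\ref{prop-sub-quo}, so one only needs the MWA computation.

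The one genuine difference is in how the identity $(r_0(d,n)_{(k)}^{(\rho)})^t=r_n(n,d)_{(k)}^{(\rho)}$ is established for $k<d$. You propose a direct run of MWA$^\la$, which the paper explicitly acknowledges as a valid route. The paper, however, gives a shorter argument that sidesteps the bookkeeping: it observes the factorization $r_0(d,n)_{(k)}^{(\rho)}=a(d+k,n)^{(\rho)}+a(d-k,n)^{(\rho)}$, applies the known duality $Z(a(m,n))^t=Z(a(n,m))$ from \eqref{t} to each summand, and uses irreducibility of unitary parabolic induction to conclude $Z(a(d+k,n)^{(\rho)})^t\times Z(a(d-k,n)^{(\rho)})^t = Z(a(n,d+k)^{(\rho)}+a(n,d-k)^{(\rho)})$, which one checks directly equals $Z(r_n(n,d)_{(k)}^{(\rho)})$. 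This buys a conceptual proof with no segment-by-segment tracking; your MWA approach is equally valid but more laborious.
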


\begin{proof} It remains to prove only  the claim regarding the irreducible quotient in (4). For $d\leq k \leq d+n-1$, this follows from (g) in (3) of Lemma \ref{le-crucial}. Therefore, we shall suppose 
$$
k<d.
$$
 For this, one needs to prove $(r_{0}(d,n)_{(k)}^{(\rho)})^t=r_{n}(n,d)_{(k)}^{(\rho)}$. One can get this directly by MWA$^\la$ and Proposition \ref{prop-sub-quo}. We shall give here a different argument. Observe that 
 $$
 r_{0}(d,n)_{(k)}^{(\rho)}=a(d+k,n)^{(\rho)}+a(d-k,n)^{(\rho)}.
 $$
 Now we have
 $$
 Z(a(d+k,n)^{(\rho)}+a(d-k,n)^{(\rho)})^t=Z(a(d+k,n)^{(\rho)})^t\t Z(a(d-k,n)^{(\rho)})^t= 
 $$
 $$
 Z(a(n,d+k)^{(\rho)})\t Z(a(n,d-k)^{(\rho)})=Z(a(n,d+k)^{(\rho)}+a(n,d-k)^{(\rho)}),
 $$
 since  the unitary parabolic induction is irreducible for general linear groups. This implies that 
 $$
 (r_{0}(d,n)_{(k)}^{(\rho)})^t= a(n,d+k)^{(\rho)}+a(n,d-k)^{(\rho)}.
 $$
 One gets directly that
 $$
 r_{n}(n,d)_{(k)}^{(\rho)}= a(n,d+k)^{(\rho)}+a(n,d-k)^{(\rho)}.
 $$
 This completes the proof that $(r_{0}(d,n)_{(k)}^{(\rho)})^t=r_{n}(n,d)_{(k)}^{(\rho)}$.
 
 Considering the Hermitian contragredients, we get that ${\C R}(n,d)_{(-k)}^{(\rho)}$ has opposite irreducible quotient and subrepresentation from their position in ${\C R}(n,d)_{(k)}^{(\rho)}$.
 
 The proof is now complete.
\end{proof}

\section{Derivatives and the lattice of subrepresentations - disjoint beginnings of segments}
\label{sec-der-disjoint-b}

\subsection{On the lattice of subrepresentations of a multiplicity one representation of finite length}
We shall first present several simple and well known observations about the lattice of the subrepresentations of a  multiplicity free representation $(\pi,V)\in Alg_{f.l.}(G_n)$ (actually, the general discussion holds on the level of modules).

Let $\pi$ be as above (i.e. a multiplicity one representation of finite length), and denote by
$$
J.H.(\pi)
$$
the set of all irreducible subquotients of $\pi. $

Fix any $\s\in J.H.(\pi)$, and let $V_1$ and $V_2$ be two subrepresentations of $V$ such that both of them have $\s$ for a subquotient. The multiplicity one implies that their intersection has also $\s$
for subquotient. Define
$$
\pi_\text{smallest}(\s)
$$
to be the smallest (with respect to the inclusion) subrepresentation of $\pi$ which has $\s$ for a subquotient (it is equal to the intersection of all subrepresentations of $ \pi$ which have $\s$ for  a subquotient.

Now one directly sees that for a multiplicity one representation of finite length $\pi$ and $\s,\s_1,\s_2\in J.H.(\pi)$ holds the following:

\begin{enumerate}
\item
$\pi_\text{smallest}(\s)$ is a cyclic representation.

\item $\pi_\text{smallest}(\s)$ has a minimal length among all the subrepresentations of $\pi$ which have $\s$ for a sub quotient (this property characterizes $\pi_\text{smallest}(\s)$).

\item
 $\pi_\text{smallest}(\s)$ has a unique irreducible quotient, which is  $\s$ (this property also characterizes $\pi_\text{smallest}(\s)$). 
 

\item
 $\pi_\text{smallest}(\s_1) =\pi_\text{smallest}(\s_2) \iff \s_1=\s_2$.

\item The map
$$
V\mapsto J.H.(V)
$$
is an injective map from the set of all subrepresentations of $V$ into the partitive  set $\mathcal P(J.H.(\pi))$ of $J.H.(\pi)$.

\item
For two subrepresentations $V_1$ and $V_2$ of $V$ holds
$$
V_1\subseteq V_2 \iff J.H.(V_1)\subseteq J.H.(V_2).
$$

\end{enumerate}

\begin{definition} We shall say that $\pi$ has a minimal lattice of subrepresentations if 
$$
\pi_\text{\em smallest}(\s), \s \in J.H.(\pi)
$$
is a complete lattice of the non-zero subrepresentations of $\pi$
\end{definition}

Let now $\pi$ be a multiplicity one representation of  finite length  $n$. Suppose that $\pi$ has  a minimal  lattice of subrepresentations. Let $\s_1,\s_2$ be different members of $J.H.(\pi)$. Then  one proves easily

\begin{enumerate}

\item[(i)]   
$
\text{Either \ \ } \s_1\in J.H.(\pi_\text{smallest}(\s_2)) \text{\ \ or\ \ }\s_2 \in J.H. (\pi_\text{smallest}(\s_1))
$ 

\item[(ii)]   
$
\text{Either \ \ } \pi_\text{smallest}(\s_1) \subseteq\pi_\text{smallest}(\s_2) \text{\ \ or\ \ }\pi_\text{smallest}(\s_2) \subseteq \pi_\text{smallest}(\s_1)
$
(this property characterizes  representations with a minimal  lattice of subrepresentations).

\item[(iii)]   There exists an enumeration
$$
\s_1,\s_2,\dots,\s_n
$$
of the members of $J.H.(\pi)$, such that the map
$$
V' \mt J.H.(V')
$$
is an isomorphism of the lattice of subrepresentations of $\pi$ and  
$$
\{\{0\},\{\s_1\},\{\s_1,\s_2\},\dots,\{\s_1,\s_2,\dots,\s_n\}\}.
$$
The above enumeration is uniquely determined by the requirement $i<j \iff \s_i\in J.H.(\pi_\text{smallest}(\s_j))$.

\item[(iv)] The cardinal number of  the set of all non-zero subrepresentations of $\pi$ is $n$, i.e. the length of $\pi$
(this property characterizes  representations with a minimal  lattice of subrepresentations).

\end{enumerate}

Therefore, a multiplicity one representations of finite length with a minimal   lattice of subrepresentations has a very simple  lattice of  subrepresentations.  To determine explicitly this lattice, one needs only to determine which of the inclusions  in (i) hold.

\subsection{Derivatives of representations}

Now we shall recall of the definition of the derivatives of representations. 
We shall 
follow the notation of the third section of \cite{BZ}. 
The group $G_{n-1}$ is imbedded into $G_n$ in a usual way: $g\mapsto 
\left(
\begin{matrix}
g&0\\0&1
\end{matrix}
\right).
$
The subgroup of $G_n$ consisting  of all the matrices which have the bottom raw equal to $(0,\dots,0,1)$
is denoted by $P_n$. Its unipotent radical is denoted by $V_n$.

Let $(\pi,W)$ be a smooth representation of $G_n$. Denote by
$$
\Psi^-(\pi)
$$ 
the normalized Jacquet module of $\pi$, i.e. $W/
W(V_n,1_{V_n})$, where 
$$
W(V_n,1_{V_n})=
\text{span}_{\mathbb C}\{\pi(v)w-w;v\in V_n,w\in W\}
$$ 
(the action of $G_{n-1}$ is the quotient action, twisted by the square root of the modular character of $P_n$; see \cite{BZ}). One  extends $\Psi^-$ to a functor
$$
\Psi^-:Alg(P_n) \ra Alg(G_{n-1})
$$ 
in a standard way.

Observe that
$
\Psi^-:W\mapsto W/W(V_n,1_{V_n})
$
carries invariant subspaces in one category to the invariant subspaces in the other one (although $\Psi^-$
is not an intertwining of representations of $G_{n-1}$).

We fix a non-trivial character $\psi$ of the additive group of the field,
and define a character $\theta$ of $V_n$ by $v\mapsto \psi(v_{n-1,n}).$  This character is normalized by $P_{n-1}$. Let now $(\pi,W)$ be a smooth representation of $P_n$. Denote $W(V_n,\theta)=
\text{span}_{\mathbb C}\{\pi(v)w-\theta(v)w;v\in V_n,w\in W\}$. Define now $\Phi^-(\pi)$ to be the quotient representation of $P_{n-1}$ on $W/W(V_n,\theta)$, again twisted by a square root of the modular character (see \cite{BZ} for details).
One extends $\Phi^-$ to a functor 
$$
\Phi^-:Alg(P_n)\ra Alg(P_{n-1}).
$$
Again this functor  carries invariant subspaces in one category to the invariant subspaces in the other category (although it is not intertwining of objects).  

Both above functors are exact.
We have natural functor $Alg(G_n)\ra Alg(P_n)$.

Now for $1\leq k \leq n$  consider the functor
$$
\Psi^-\circ (\Phi^-)^{k-1}:Alg(P_n)\ra Alg(G_{n-k}).
$$
Actually, we shall consider this functor only on $Alg(G_n)$.
This functor is called $k$-th derivative, and it is denoted by
$$
\pi \mapsto \pi^{(k)}.
$$
One takes $\pi^{(0)}$ to be just $\pi$.
Observe that again the $k$-th derivative functor carries invariant subspaces to invariant subspaces. One defines the highest derivative of a representation in the same way as we did in section 3. 

Lemma 4.5 of \cite{BZ} tells us that $(\pi_1\t\pi_2)^{(k)}$ is glued from the representations 
\begin{equation}
\label{prod-der}
\pi_1^{(i)}\t\pi_2^{(k-i)}, \ \ 0\leq i \leq k.
\end{equation}
In other words, there exists filtration $\{0\}=U_0\subseteq U_1\subseteq \dots \subseteq U_k$ of the representation space of $(\pi_1\t \pi_2)^{(k)}$, such that the sequence of representations $U_i/U_{i-1}$ coincide (up to isomorphisms) to the sequence \eqref{prod-der}, after a suitable renumeration.

\begin{remark}
\label{rm-der-rep}
Observe that in  the case of the highest derivatives, 
the above result  gives as the complete description of the highest derivatives of the product of representations. It implies
the following. Let $\pi_i^{(k_i)}$ be the highest derivatives of $\pi_i$, for $i=1,2$. Then  the highest derivative of the parabolically  induced representation
$\pi_1\t\pi_2$ is $(\pi_1\t\pi_2)^{(k_1+k_2)}$, and we have
$$
(\pi_1\t\pi_2)^{(k_1+k_2)}\cong \pi_1^{(k_1)}\t\pi_2^{(k_2)}.
$$
\end{remark}

\subsection{Connection with the derivatives on the level of of $R$}

 If $\rho$ is an irreducible cuspidal representation of $G_n$, then $\rho^{(i)}=0$ for $0<i<n$ and $\rho^{(0)}=Z(\emptyset)$ (Theorem 4.4 of \cite{BZ}). From this and \eqref{prod-der} easily follows that $\pi^{(i)}$ is a finite length representation, for any $0\leq i
 \leq n$ and any $\pi\in \tilde G_n$. Therefore, one can define
 $$
 \Der'(\pi)=\sum_{i=0}^n
\text{s.s.}(\pi^{(i)})\in R.
 $$
 Extend $\Der'$ to an additive endomorphism of $R$. Obviously, $\Der'$ is positive. Then \eqref{prod-der} implies that the additive endomorphism $\Der'$ is actually   a ring homomorphism. 
 
 Theorem 8.1 of \cite{Z} tells us that the highest derivative of a representation $Z(a)$ is $Z(a^-)$, for $a \in M(\SC)$. This implies
 $$
 \Der=\Der'.
 $$
 Therefore, $\Der$ is positive and the highest derivative of $Z(a)$ on the level of $R$ is $Z(a^-)$ (as we noted in 2.10).

\subsection{Lattice of subrepresentations}

\begin{proposition}
\label{prop-lat-diff}
 Let $k\in\Z_{\geq 0}$
and
 $
n\leq k\leq n+d.
$ Denote $\ell=\min(n,n+d-k)$.
Then ${\C R}(n,d)_{(k)}^{(\rho)}$ is a multiplicity one representation of length $\ell+1$, and
it is a representation with the minimal lattice of subrepresentations. Further, for $0\leq i \leq \ell$,   holds
\begin{equation}
\label{smallest}
J.H.\left(\left({\C R}(n,d)_{(k)}^{(\rho)}\right)_{\text{\rm smallest}}(Z(r_i(n,d)_{(k)}^{(\rho)}))\right)
=
\{Z(r_j(n,d)_{(k)}^{(\rho)}),  i\leq j \leq 
\ell \}.
\end{equation}
 Denote by $\C L$ the set of all non-zero subrepresentations of ${\C R}(n,d)_{(k)}^{(\rho)}$.  Then the mapping
 $$
J:V \mapsto \{i; Z(r_i(n,d)_{(k)}^{(\rho)})\leq V\},
$$
is a an isomorphism of the partially ordered sets
 $
\C L$  and 
$$
\{ \{\ell\}, \{\ell-1,\ell\},\dots\{0,1,\dots,\ell\}    \}.
$$
\end{proposition}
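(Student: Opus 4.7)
The plan is to argue by induction on $d$, combining Theorem \ref{th-diff} with the fact that the derivative functors on representations recalled in this section are exact and send subrepresentations to subrepresentations. By Theorem \ref{th-diff}, $\mathcal{R}(n,d)_{(k)}^{(\rho)}$ is already known to be a multiplicity-one representation of length $\ell + 1$ with composition factors $Z(r_j(n,d)_{(k)}^{(\rho)})$, $0 \leq j \leq \ell$, and its unique irreducible subrepresentation is $Z(r_\ell(n,d)_{(k)}^{(\rho)})$. Hence every nonzero subrepresentation $V$ satisfies $\ell \in J(V)$. What remains is to show that $J(V)$ is always an initial segment $\{j_0, j_0+1, \ldots, \ell\}$; once this is established, both the minimality of the lattice and the displayed description of $\pi_{\text{smallest}}$ follow: a composition series witnesses $\ell+1$ nonzero subrepresentations, and by injectivity of $V \mapsto J(V)$ (property (6)) the $\ell+1$ possible initial intervals are in bijection with them.

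The base case $n+d-k = 0$ is trivial since $\mathcal{R}$ is then irreducible by Theorem \ref{th-diff}(2). For the inductive step, set $\ell' = \min(n, n+d-1-k)$; a direct inspection of cases gives $\ell - \ell' \in \{0,1\}$, with $\ell - \ell' = 1$ precisely when $k \geq d$. The $(2np)$-th derivative functor (where $p$ is the degree of $\rho$) is exact and sends subrepresentations to subrepresentations. By Remark \ref{rm-der-rep} and the highest derivative formula for $\mathcal{R}$, one has $(\mathcal{R}(n,d)_{(k)}^{(\rho)})^{(2np)} \cong \mathcal{R}(n,d-1)_{(k)}^{(\nu^{-1/2}\rho)}$, which has a minimal lattice by the inductive hypothesis. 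By \eqref{der-r}, for $0 \leq j \leq \ell'$ the $(2np)$-th derivative of $Z(r_j(n,d)_{(k)}^{(\rho)})$ is $Z(r_j(n,d-1)_{(k)}^{(\nu^{-1/2}\rho)})$. When $\ell = \ell'+1$ (so $k \geq d$) the multisegment $r_\ell(n,d)_{(k)}^{(\rho)}$ contains strictly fewer than $2n$ segments because all intersections $\D_{i+n-\ell} \cap \G_i$ are empty, and therefore $Z(r_\ell)^{(2np)} = 0$.

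Now fix any nonzero subrepresentation $V \subseteq \mathcal{R}(n,d)_{(k)}^{(\rho)}$. Exactness together with subrepresentation-preservation yield $V^{(2np)} \subseteq \mathcal{R}(n,d-1)_{(k)}^{(\nu^{-1/2}\rho)}$. Multiplicity one and the computations above imply $J.H.(V^{(2np)}) = \{Z(r_j(n,d-1)_{(k)}^{(\nu^{-1/2}\rho)}) : j \in J(V),\ 0 \leq j \leq \ell'\}$. The inductive hypothesis applied to $V^{(2np)}$ forces the set above to be either empty (in which case $V = Z(r_\ell)$) or of the form $\{Z(r_j(n,d-1)) : j_0 \leq j \leq \ell'\}$ for some $j_0 \in \{0, \ldots, \ell'\}$. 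Combining with $\ell \in J(V)$ yields $J(V) = \{j_0, j_0+1, \ldots, \ell\}$, as required.

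The hardest step is the case $\ell - \ell' = 1$ (i.e., $k \geq d$): the extra composition factor $Z(r_\ell)$ has vanishing $(2np)$-th derivative, so the inductive hypothesis applied to the derivative cannot on its own pin down its position in the lattice. The key ingredient that resolves this is the identification, provided by Theorem \ref{th-diff}(4), of $Z(r_\ell)$ as the unique irreducible subrepresentation of $\mathcal{R}(n,d)_{(k)}^{(\rho)}$: this forces $Z(r_\ell) \leq V$ for every nonzero $V$, supplying precisely the top end of the chain where the derivative sees nothing.
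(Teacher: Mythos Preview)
Your proof is correct and follows essentially the same strategy as the paper's: induction on $d$ via the highest derivative functor, using exactness and the fact that derivatives carry subrepresentations to subrepresentations, together with the identification of $Z(r_\ell)$ as the unique irreducible subrepresentation.

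The differences are organizational rather than substantive. The paper splits the induction into two ranges ($d\leq k$ and $k<d$); in the first range it passes to the quotient $\mathcal R(n,d)_{(k)}^{(\rho)}/Z(r_\ell)$ and then counts subrepresentations via the derivative, whereas you handle both ranges uniformly by directly tracking $J(V)$ for each subrepresentation $V$. Your formulation is a bit more streamlined. One cosmetic point: your stated base case $n+d-k=0$ gives $d=k-n$, which is $0$ when $k=n$; the paper treats $d=1$, $k=n$ as a separate base case (length two, not semisimple). Your inductive step still goes through in that situation provided one uses the paper's convention $\mathcal R(n,0)_{(k)}^{(\rho)}=Z(\emptyset)$, so this is only a minor omission, but it would be cleaner to mention it explicitly. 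Also, when you invoke \eqref{der-r} for $0\leq j\leq \ell'$, note that the cited formula is stated for $d\geq 2$ and $j\geq 1$ with $\Delta_n\cap\Gamma_j\neq\emptyset$; the case $j=0$ is immediate and the nonemptiness condition is exactly $j\leq \ell'$, so the citation is legitimate but a word of justification would not hurt.
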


\begin{proof} We shall  first prove the claim of the proposition that
${\C R}(n,d)_{(k)}^{(\rho)}$ are  multiplicity one representations of length $\ell+1$  with the minimal lattice of subrepresentations, and that  formulas \eqref{smallest} holds.
The proof of this claim  will go by induction with respect to $d$ (with $n$ and $k$ fixed). We break the induction into two parts. These parts  follow the proofs of claims (3) and (4) of Proposition \ref{prop-diff}.

The first of the induction is  for indexes $d$ which satisfy 
$$
d\leq k.
$$
In this situation $n+d-k=\min(n,n+d-k)$.

If $d=k-n$, then ${\C R}(n,d)_{(k)}^{(\rho)}$ is irreducible and isomorphic to $Z(r_0(n,d)_{(k)}^{(\rho)})$. Therefore, the claim on the composition series holds and we have the basis of the induction.

Suppose $d=1$ and $d>k-n$ (i.e. $1>k-n$). Then the assumptions on the indexes imply $k=n$. Now it is well known fact that
${\C R}(n,d)_{(k)}^{(\rho)}$ is a length two representation which is not semi simple. Further, $Z(r_0(n,d)_{(k)}^{(\rho)})$ is an irreducible quotient of it.  This implies that  the claim hold also for $d=1$.
Therefore, it is enough to consider the case 
$$
d>1.
$$

Remark \ref{rm-der-rep} implies that for $d\geq 2$, the highest derivative of the representation ${\C R}(n,d)_{(k)}^{(\rho)}$ is isomorphic to
$$
\nu^{-1/2}{\C R}(n,d-1)_{(k)}^{(\rho)}.
$$
Let ${\C R}(n,d)_{(k)}^{(\rho)}$ be a representation of $G_m$, and let its highest derivative be a  representation of $G_{m-p}$

Recall that 
$\text{h.d.}(Z(r_j(n,d)_{(k)}^{(\rho)}))=\nu^{-1/2}Z(r_j(n,d-1)_{(k)}^{(\rho)})$
for  
$0\leq j\leq n+d-k-1$.

Observe that
$\Psi^-\circ (\Phi^-)^{p-1}$ is surjective. Further, if we consider the action of this functor on the irreducible subquotients of ${\C R}(n,d)_{(k)}^{(\rho)}$, it sends only $Z(r_{n+d-k}(n,d)_{(k)}^{(\rho)})$ to $)$. We can factor 
$$
\varphi:{\C R}(n,d)_{(k)}^{(\rho)}/Z(r_{n+d-k}(n,d)_{(k)}^{(\rho)}) \ra \nu^{-1/2}{\C R}(n,d-1)_{(k)}^{(\rho)}.
$$
 Recall that $\Psi^-\circ (\Phi^-)^{p-1}$ is an exact functor. This (together with the fact that the highest derivative carries irreducible representations to the irreducible ones) implies that $\varphi$ carries the compassion series to the composition series (i.e. if $\s_1,\dots,\s_m$ is a composition series of a subrepresentation $\pi$, then $\varphi(\s_1),\dots,\varphi(\s_m)$ is a composition series of  $\varphi(\pi)$. This (together with the multiplicity one) implies that $\varphi$ is injective mapping considered on the lattice of subrepresentations of ${\C R}(n,d)_{(k)}^{(\rho)}/Z(r_{n+d-k}(n,d)_{(k)}^{(\rho)})$. The inductive assumption implies that the quotient
$
{\C R}(n,d)_{(k)}^{(\rho)}/Z(r_{n+d-k}(n,d)_{(k)}^{(\rho)})
$
has at most $n+d-k$ non-zero subrepresentations  (since $\nu^{-1/2}{\C R}(n,d-1)_{(k)}^{(\rho)}$ satisfies this property). This immediately  implies that ${\C R}(n,d)_{(k)}^{(\rho)}/Z(r_{n+d-k}(n,d)_{(k)}^{(\rho)})$ is a representation with a  minimal lattice of subrepresentations.

Further, the uniqueness of an irreducible subrepresentation of ${\C R}(n,d)_{(k)}^{(\rho)}$  implies that this representation has at most $n+d-k+1$ non-zero subrepresentations. Since $n+d-k+1$ is its length, we conclude that  ${\C R}(n,d)_{(k)}^{(\rho)}$ has the minimal lattice of subrepresentations.
 
 Applying $\Psi^-\circ (\Phi^-)^{p-1}$ (or $\varphi$) and using  the inductive assumption, we   get that for a non-zero subrepresentation $V$ of ${\C R}(n,d)_{(k)}^{(\rho)}$  holds
\begin{equation}
\label{implication-Z}
Z(r_j(n,d)_{(k)}^{(\rho)})\in J.H.(V)\implies Z(r_i(n,d)_{(k)}^{(\rho)})\in J.H.(V) \quad \text{for all $j\leq i \leq n+d-k$}
\end{equation}
 This finishes the proof of the inductive step for the first part.

For the second part of the induction, i.e. when $k<d$, we proceed similarly. For $d=k$ we know that the claim holds from the first part of the induction which we have proved. Observe that in this case we have  $n=\min(n,n+d-k)$ (since $k\leq d$). The proof of the inductive step for this case follows the proof of the first part (actually, it is simpler, since we do not need to go to a quotient of ${\C R}(n,d)_{(k)}^{(\rho)}$).
One concludes directly the claim inductively, using the formula for  the highest derivative and the minimality of the lattice of subrepresentations of the highest derivative (which is the inductive assumption). We get also \eqref{implication-Z} from the inductive assumption. This finishes the proof of the claim in this case.

The claim of the proposition that we have proved, directly implies the claim of the proposition on the  on the lattice $\C L$ of subrepresentations. This finishes the proof of the proposition.
\end{proof}

\section{Composition series - non-disjoint beginnings of segments}

We continue with the notation of the previous sections. In this section we shall assume that
$$
A_+\leq C_-.
$$
 Passing to the $n,d,k$-notation, this becomes 
$-\frac{d-1}2-\frac{n-1}2+\frac k2\leq -\frac{d-1}2+\frac{n-1}2-\frac k2 $, i.e.
$$
k< n.
$$

Recall that  for $j\ne 0$,  $r_j(n,d)^{(\rho)}_k$ is defined whenever $\D_n\ra \G_j$. Then we get $r_j(n,d)^{(\rho)}_k$  by replacing
in
$$
r_0(n,d)^{(\rho)}_k=(\D_{1},\dots,\D_n,\G_1,\dots,\G_n)
$$
the part
$$
\D_{n-j+1},\dots,
\D_n,\G_1,
\dots,\G_j
$$
with
$$
\D_{n-j+1}\cup \G_1,\D_{n-j+1}\cap \G_1,
\dots\dots,
\D_{n}\cup\G_j,\D_n\cap\G_j.
$$

Recall that $\D_{n}\ra \G_{j}$ 
if and only 
$$
\max( n-k+1,1)\leq j \leq \min(n,n+d-k).
$$

\begin{proposition}
\label{prop-same}
 Let $A_+\leq C_-$, i.e.
$$
k<n.
$$
Then

\begin{enumerate} \item ${\C R}(n,d)_{(k)}^{(\rho)}$ is a multiplicity one representation.

\item  Suppose $B_-<A^+$. In that case
$$
d\leq  k<n.
$$
Then ${\C R}(n,d)_{(k)}^{(\rho)}$ has length $d+1$, and its composition series consists of 
$$
Z(r_i(n,d)_{(k)}^{(\rho)}), \quad n-k+1 \leq i \leq n-k+d,
$$
together with $Z(r_{0}(n,d)_{(k)}^{(\rho)})$.
\item Suppose $A^+\leq B_-$, i.e.
$$
k < d    .
$$
Then ${\C R}(n,d)_{(k)}^{(\rho)}$ has length $k+1$, and its composition series consists of 
$$
Z(r_i(n,d)_{(k)}^{(\rho)}), \quad n-k+1\leq i \leq n,
$$
together with $Z(r_{0}(n,d)_{(k)}^{(\rho)})$.
\end{enumerate}
\end{proposition}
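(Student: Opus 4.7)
The plan is to induct on $d$ with $n$ and $k$ fixed (satisfying $k<n$), in close analogy with the proof of Proposition~\ref{prop-diff}. One first reduces to the unitarizable case for $\rho$ by the twisting argument used there. The key structural change from Proposition~\ref{prop-diff} is that, by Proposition~\ref{prop-sub-quo}, the representation $Z(r_0(n,d)_{(k)}^{(\rho)})$ is now the unique irreducible quotient of ${\C R}(n,d)_{(k)}^{(\rho)}$ and must be adjoined by hand, rather than emerging from the inductive mechanism.

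For the inductive step, I would apply formula~\eqref{hd-r}, $\text{h.d.}({\C R}(n,d)_{(k)}^{(\rho)})=\nu^{-1/2}{\C R}(n,d-1)_{(k)}^{(\rho)}$, combined with~\eqref{der-r}, which gives $\text{h.d.}(Z(r_j(n,d)_{(k)}^{(\rho)}))=\nu^{-1/2}Z(r_j(n,d-1)_{(k)}^{(\rho)})$ whenever $\D_n\cap\G_j\neq\emptyset$. Matching highest derivatives (which preserve multiplicities by the h.d.\ product formula) identifies each $Z(r_j(n,d)_{(k)}^{(\rho)})$ in the range stipulated by the statement as a multiplicity-one subquotient of ${\C R}(n,d)_{(k)}^{(\rho)}$. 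Proposition~\ref{prop-sub-quo} then supplies $Z(r_0(n,d)_{(k)}^{(\rho)})$ as the unique irreducible quotient, completing the claimed list. The base cases $d=1$ (in case~(2)) and $d=k+1$ (in case~(3)) are verified directly, with the latter drawn from the already-established case~(2) at the transition threshold $d=k$.

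To rule out additional subquotients in case~(3), observe that $k<d$ is equivalent to $A_+\leq B_-$. Re-reading the proof of Lemma~\ref{le-crucial}(4), the conclusion $c(b)=2n$ is obtained by contradicting $B_-<A_+$ (which is forced by $c(b)<2n$) against $A_+\leq B_-$ (supplied there via $n\leq d$). In case~(3), the inequality $A_+\leq B_-$ is provided directly by $d>k$, so the same contradiction yields $c(b)=2n$ irrespective of the relative size of $n$ and $d$. Since the identified $Z(r_j(n,d)_{(k)}^{(\rho)})$ already exhaust the irreducible subquotients of cardinality $2n$, case~(3) is complete.

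The hard part will be case~(2), where $d\leq k<n$ and subquotients with $c(b)<2n$ genuinely occur (for instance $Z(r_{n-k+d}(n,d)_{(k)}^{(\rho)})$, whose cardinality is $n+k-d$). My proposed approach is to invoke the Zelevinsky involution via~\eqref{invo}: $({\C R}(n,d)_{(k)}^{(\rho)})^t={\C R}(d,n)_{(k)}^{(\rho)}$, whose swapped parameters $(d,n,k)$ satisfy $d\leq k<n$, placing it squarely in the disjoint-beginnings regime of Proposition~\ref{prop-diff}(3). The composition series of the dual is then $\{Z(r_j(d,n)_{(k)}^{(\rho)}):0\leq j\leq d\}$, of length $d+1$ matching the claim. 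The main obstacle is the combinatorial identification of each $L(r_j(d,n)_{(k)}^{(\rho)})=Z((r_j(d,n)_{(k)}^{(\rho)})^t)$ with one of the advertised $Z(r_{f(j)}(n,d)_{(k)}^{(\rho)})$ under a suitable index bijection $f$, and in particular checking that $Z(r_0(n,d)_{(k)}^{(\rho)})$ and the irreducible subrepresentation $L(r_0(d,n)_{(k)}^{(\rho)})$ correspond correctly to the endpoints $j=0$ and $j=d$. This identity is to be verified by an explicit run of $\text{MWA}^{\la}$ on the multisegments $r_j(d,n)_{(k)}^{(\rho)}$, mirroring the diagrammatic computation~\eqref{eq-ex-2}--\eqref{eq-ex-3} from the proof of Lemma~\ref{le-crucial}.
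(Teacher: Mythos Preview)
Your overall strategy (induction on $d$ with $n,k$ fixed, using the highest-derivative formula \eqref{hd-r} together with \eqref{der-r}, and invoking the Zelevinsky involution \eqref{invo} to bound the length) is exactly the paper's. Two points deserve comment.

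\textbf{Case (2).} Your description of the inductive step is slightly off. The highest-derivative matching identifies the subquotients $Z(r_j(n,d)_{(k)}^{(\rho)})$ for $j=0$ and $n-k+1\le j\le n-k+d-1$, but \emph{not} $j=n-k+d$: for that top index $\D_n\cap\G_j=\emptyset$, so \eqref{der-r} does not apply (and indeed $r_{n-k+d}(n,d-1)_{(k)}^{(\rho)}$ is not defined). What Proposition~\ref{prop-sub-quo} supplies is the irreducible \emph{subrepresentation} $Z((r_0(d,n)_{(k)}^{(\rho)})^t)$, and a single MWA$^\la$ computation identifies it with $Z(r_{n-k+d}(n,d)_{(k)}^{(\rho)})$; the quotient $Z(r_0(n,d)_{(k)}^{(\rho)})$ is already captured by the derivative. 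After that, the involution to ${\C R}(d,n)_{(k)}^{(\rho)}$ and Proposition~\ref{prop-diff}(3) give length $d+1$, which is all that is needed for exhaustion. Your proposed ``main obstacle'' --- computing $(r_j(d,n)_{(k)}^{(\rho)})^t$ for every $j$ --- is therefore unnecessary: the paper performs only the one MWA computation at $j=0$, and the remaining identifications are delivered for free by the derivative.

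\textbf{Case (3).} Your argument here is a genuine simplification of the paper's. You correctly observe that the proof of Lemma~\ref{le-crucial}(4) only uses the hypothesis $n\le d$ to obtain $A_+\le B_-$ (via $A_+\le C_-\le B_-$), and that in case~(3) the inequality $A_+\le B_-$ is available directly from $k<d$. This gives $c(b)=2n$ for every irreducible subquotient regardless of the relation between $n$ and $d$, so the derivative induction already exhausts the composition series. The paper instead keeps Lemma~\ref{le-crucial}(4) in its stated form, applies it only when $n\le d$, and handles the sub-case $d<n$ by an additional outer induction on $n$: one passes via the involution to ${\C R}(d,n)_{(k)}^{(\rho)}$, checks that its parameters $(n',d',k)=(d,n,k)$ again fall under case~(3) with $n'=d<n$, and invokes the outer inductive hypothesis to get length $k+1$. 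Your route is cleaner; the paper's has the minor advantage of citing the lemma as stated rather than its proof.
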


\begin{proof} 
We shall first prove (2), proving in the same time also that all the multiplicities are one. This statement (with the multiplicity one claim) will be denoted by (2)$^+$.
We fix  $n$ and $k$ and prove (2)$^+$ by induction with respect to $d$. 

For $d=1$, $1\leq k<n$ implies that ${\C R}(n,1)_{(k)}^{(\rho)}$ is a multiplicity one representation of length two. In its composition series there is obviously $Z(r_0(n,1)_{(k)}^{(\rho)})$. Further, a composition factor is also $Z((a_-^t+a_+^t)^t)$, which one directly computes using MWA$^\la$, and the result is $r_{n-k+1}(n,1)_{(k)}^{(\rho)}$. This completes the proof for $d=1$. Also we have the basis of the induction.

 Suppose
$$
1< d\leq k,
$$
and suppose that (2)$^+$ holds   for  $d-1$.
Recall
$$
\text{h.d.}({\C R}(n,d)_{(k)}^{(\rho)})=\nu^{-1/2}{\C R}(n,d-1)_{(k)}^{(\rho)}
$$
for $d\geq 2$.
Also
\begin{equation}
\label{hd+}
\text{h.d.}(Z(r_j(n,d)_{(k)}^{(\rho)}))=\nu^{-1/2}Z(r_j(n,d-1)_{(k)}^{(\rho)})
\end{equation}
for  
$$
\max( n-k+1,1)\leq j \leq \min(n,n+d-k-1),
$$
and also for $j=0$ holds \eqref{hd+}. Observe that $k<n$ implies $2\leq n-k+1$. Also $d\leq k$ implies $n+d-k\leq n$.

From the first relation and the inductive assumption follows that ${\C R}(n,d)_{(k)}^{(\rho)})$ has precisely $d$ irreducible subquotients $\pi=Z(a)$ such that the cardinality of $a$ is $2n$, and that all of them have multiplicity one in ${\C R}(n,d)_{(k)}^{(\rho)}$. Now \eqref{hd+} and the inductive assumption  imply that theses subquotients are the         
 representations $Z(r_j(n,d)_{(k)}^{(\rho)})$, $n-k+1 \leq j \leq n-k+d-1$. Now Proposition \ref{prop-sub-quo}  imply also that $Z((r_{0}(d,n)_{(k)}^{(\rho)})^t)$ is an irreducible subquotient, and that the multiplicity of this subquotient  is one. We need to  show that  
$$
(r_{0}(d,n)_{(k)}^{(\rho)})^t=  r_{n-k+d}(n,d)_{(k)}^{(\rho)}.
$$
This follows in the same way as we have proved (e) in (3) of Lemma \ref{le-crucial}. We illustrate the proof  again by drawing, instead of going into technical details:
\begin{equation} 
\label{eq-ex-4}
\xymatrix@C=.6pc@R=.1pc
{ 
 \bullet& \bullet\ar @{-}[l]   
\\
&\bullet& \bullet\ar @{-}[l]   
&
\circ& \circ\ar @{-}[l]   
\\
&&\bullet& \bullet\ar @{-}[l]   
&
\circ& \circ\ar @{-}[l]   
\\
&&&\bullet& \bullet\ar @{-}[l]   
&
\circ& \circ\ar @{-}[l]  
\\
&&&&\bullet& \bullet\ar @{-}[l]   
&
\circ& \circ\ar @{-}[l]  
\\
&&&&&&&\circ& \circ\ar @{-}[l]   
} 
\end{equation}

\begin{equation} 
\label{eq-ex-5}
\xymatrix@C=.6pc@R=.1pc
{ 
 \bullet& \bullet   
\\
&\bullet\ar @{-}[lu] & \bullet\ar @{-}[lu]   
&
\circ & \circ   
\\
&&\bullet\ar @{-}[lu]& \bullet\ar @{-}[ul]   
&
\circ\ar @{-}[lu]& \circ\ar @{-}[ul]   
\\
&&&\bullet\ar @{-}[lu]& \bullet\ar @{-}[ul]   
&
\circ\ar @{-}[lu]& \circ\ar @{-}[ul]  
\\
&&&&\bullet\ar @{-}[lu]& \bullet\ar @{-}[ul]   
&
\circ\ar @{-}[lu]& \circ\ar @{-}[lu]  
\\
&&&&&&&\circ\ar @{-}[lu]& \circ\ar @{-}[lu]   
} 
\end{equation}

\begin{equation} 
\label{eq-ex-6}
\xymatrix@C=.6pc@R=.1pc
{ 
 \bullet& \bullet\ar [l]   
\\
&\bullet& \bullet\ar  [l]   
&
\circ\ar  [l]& \circ\ar  [l]   
\\
&&\bullet& \bullet\ar  [l]   
&
\circ\ar  [l]& \circ\ar  [l]   
\\
&&&\bullet& \bullet\ar  [l]   
&
\circ\ar  [l]& \circ\ar  [l]  
\\
&&&&\bullet& \bullet\ar  [l]   
&
\circ\ar  [l]& \circ\ar  [l]  
\\
&&&&&&&\circ& \circ\ar  [l]   
} 
\end{equation}

It remains to prove that these sub quotients are all the irreducible subquotients. For this, it is enough to prove that the length of ${\C R}(n,d)_{(k)}^{(\rho)}$ is $d+1$ (counting also multiplicities).  Note that we can apply (3) of Proposition \ref{prop-diff} to ${\C R}(d,n)_{(k)}^{(\rho)}$ (observe that the roles of $n$ and $d$ are switched). That proposition tells us that the length is $d+1$, which we needed.

Now we shall prove (3). As above, in the same way we introduce (3)$^+$.  
Suppose $n=1$. Then $k=0$, and the claim obviously holds.
Suppose $n>1$, and suppose that the claim holds for $n'<n$. We shall now prove (3) for this $n$
 by induction with respect to $d$. Recall that we need to prove for $k< d$. Actually, we shall prove  a slightly stronger   (3)$^+$. We shall prove it by induction for
$$
k\leq d.
$$
For $d=k$, claim (3)$^+$  holds by (2)$^+$. 
Let $k<d$, and suppose that our claim holds for $d-1$. Then in the same way as in the first part of the proof, looking at the highest derivative of ${\C R}(n,d)_{(k)}^{(\rho)}$, the inductive assumption implies that $Z(r_j(n,d)_{(k)}^{(\rho)})$, $n-k+1\leq j \leq n$, together with $Z(r_{0}(n,d)_{(k)}^{(\rho)})$, are subquotients, and that all these irreducible subquotients  have multiplicity one. 
We also know that these are all the irreducible subquotients represented by multisegments of cardinality $2n$.

Suppose $n\leq d$ (i.e. $C_-\leq B_-$).
In this case, since  the condition $A_+\leq B_-$ holds (we assume it in (3)), we can  apply  (4) of Lemma \ref{le-crucial}, which says   that these are all the irreducible subquotients. This completes the proof in this case.

Suppose $d<n$ (i.e. $B_-<C_-$). 
We have seen that we have $k+1$ irreducible subquotients. For them, we want to prove that they exhaust the composition series of ${\C R}(n,d)_{(k)}^{(\rho)}$. Obviously, it is enough to prove that ${\C R}(n,d)_{(k)}^{(\rho)}$ has length $k+1$. For this, it is enough to show that $({\C R}(n,d)_{(k)}^{(\rho)})^t={\C R}(d,n)_{(k)}^{(\rho)}$ has length $k+1$. 

Now consider ${\C R}(d,n)_{(k)}^{(\rho)}$ and denote $n'=d$, $d'=n$. In the same way as we have introduced numbers $A_\pm,B_\pm,C_\pm,D_\pm$ for the triple $n,d,k$, we introduce $A_\pm',B_\pm',C_\pm',D_\pm'$ for the triple $n',d',k$. Observe that $A_\pm'=A_\pm,B_\pm'=C_\pm,C_\pm'=B_\pm$ and $D_\pm'=D_\pm$. Then $n'<d'$ (i.e. $C_-'<B_-'$). Since $A_+\leq C_-, A^+\leq B_-$ and $B_-<C_-,$ we have  
$$
A_+'\leq B_-', A_+'\leq C_-' \text{  and  } C_-'<B_-'.
$$
Therefore, ${\C R}(n',d')_{(k)}^{(\rho)}={\C R}(d,n)_{(k)}^{(\rho)}$ is covered by (3), and the inductive assumption implies that the length of this representation is $k+1$ (recall $n'=d<n$).

The proof of the proposition is now complete.
\end{proof}

\begin{theorem}
\label{th-not-diff}
 Let $k\in\Z_{\geq 0}$.
 
 \begin{enumerate}

\item Representations   ${\C R}(n,d)_{(k)}^{(\rho)}$ and   ${\C R}(n,d)_{(-k)}^{(\rho)}$ have the same composition series. They are multiplicity one representations.
\item
 For
$$
k<n,
$$
the composition series of  ${\C R}(n,d)_{(k)}^{(\rho)}$ are given by
$$
Z(r_i(n,d)_{(k)}^{(\rho)}), \quad n-k+1\leq i \leq \min(n-k+d,n),
$$
together with $Z(r_{0}(n,d)_{(k)}^{(\rho)})$.
\item The representation ${\C R}(n,d)_{(k)}^{(\rho)}$ has a unique irreducible subrepresentation  and also a unique irreducible quotient. The irreducible quotient is isomorphic to $Z(r_{0}(n,d)_{(k)}^{(\rho)})$. The irreducible subrepresentation is isomorphic to $Z(r_{\min(n-k+d,n)}(n,d)_{(k)}^{(\rho)}).$ 

For ${\C R}(n,d)_{(-k)}^{(\rho)}$, we have opposite situation regarding irreducible subrepresentation and quotient.
\end{enumerate}
\end{theorem}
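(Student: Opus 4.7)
The plan is to assemble this theorem by combining Proposition \ref{prop-same} with Proposition \ref{prop-sub-quo} and a combinatorial identity already established within earlier proofs; essentially all substantive work has been done, and what remains is bookkeeping.

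For claim (1), the equality $\mathrm{s.s.}({\C R}(n,d)_{(k)}^{(\rho)})=\mathrm{s.s.}({\C R}(n,d)_{(-k)}^{(\rho)})$ was recorded in the setup of the representations ${\C R}(n,d)_{(l)}^{(\rho)}$ (right after \eqref{invo}), while the multiplicity one property is exactly Proposition \ref{prop-same}(1). So both halves of (1) are immediate.

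For claim (2), the plan is to unify the two subcases of Proposition \ref{prop-same}. When $d \leq k < n$ (part (2) of that proposition), the upper index of the range is $n-k+d$, and since $d \leq k$ forces $n-k+d \leq n$, we have $\min(n-k+d,n)=n-k+d$. When $k < d$ (part (3) of that proposition), the upper index is $n$, and $k<d$ gives $n < n-k+d$, so $\min(n-k+d,n)=n$. Hence the single formula $n-k+1 \leq i \leq \min(n-k+d,n)$ simultaneously covers both cases, together with the extra composition factor $Z(r_0(n,d)_{(k)}^{(\rho)})$.

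For claim (3), the unique irreducible quotient of ${\C R}(n,d)_{(k)}^{(\rho)}$ is identified by Proposition \ref{prop-sub-quo} as $Z(a(n,d)^{(\nu^{-k/2}\rho)}+a(n,d)^{(\nu^{k/2}\rho)})$, which by definition equals $Z(r_0(n,d)_{(k)}^{(\rho)})$. For the unique irreducible subrepresentation, Proposition \ref{prop-sub-quo} identifies it with $Z((r_0(d,n)_{(k)}^{(\rho)})^t)$, so it suffices to verify the combinatorial identity
\[
(r_0(d,n)_{(k)}^{(\rho)})^t = r_{\min(n-k+d,n)}(n,d)_{(k)}^{(\rho)}.
\]
When $d \leq k$, this was already established by the $\mathrm{MWA}^{\leftarrow}$ computation illustrated by the three diagrams \eqref{eq-ex-4}--\eqref{eq-ex-6} in the proof of Proposition \ref{prop-same}. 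When $k < d$, one can repeat verbatim the argument given at the end of the proof of Theorem \ref{th-diff}: use that $Z(a(d+k,n)^{(\rho)}) \times Z(a(d-k,n)^{(\rho)})$ is irreducible (unitary parabolic induction of Speh representations), split the Zelevinsky involution across the resulting sum, and apply \eqref{t} to obtain $(r_0(d,n)_{(k)}^{(\rho)})^t = a(n,d+k)^{(\rho)}+a(n,d-k)^{(\rho)} = r_n(n,d)_{(k)}^{(\rho)}$. Finally, the statement about the opposite positions of the irreducible subrepresentation and quotient in ${\C R}(n,d)_{(-k)}^{(\rho)}$ is already part of Proposition \ref{prop-sub-quo}, obtained via Hermitian contragredient.

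No step of this program presents a genuine obstacle; the only nontrivial verification is the combinatorial identity above, and both cases of it are already worked out in earlier proofs that are merely being quoted here.
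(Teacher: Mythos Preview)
Your proposal is correct and follows essentially the same route as the paper's own proof: claims (1) and (2) are read off from Proposition \ref{prop-same} and the setup, and for (3) the only nontrivial point is the identity $(r_0(d,n)_{(k)}^{(\rho)})^t = r_{\min(n-k+d,n)}(n,d)_{(k)}^{(\rho)}$, which the paper also splits into the case $d\leq k$ (handled in the proof of Proposition \ref{prop-same}) and the case $k<d$ (handled by the unitary-induction argument from the proof of Theorem \ref{th-diff}). Your write-up is in fact slightly more explicit about the bookkeeping than the paper's, but the substance is identical.
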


\begin{proof} It remains to prove only  the claim regarding the irreducible subrepresentation  in (3). For $d\leq k \ (<n)$, this is proved in the proof of the  previous proposition. Therefore, we shall suppose 
$$
k<d.
$$
 For this, one needs to prove $(r_0(d,n)_{(k)}^{(\rho)})^t=r_n(n,d)_{(k)}^{(\rho)}$. One can get this directly from MWA$^\la$. We can give here  also a different argument, like in the proof of Theorem \ref{th-diff}. Again
 $$
 r_0(d,n)_{(k)}^{(\rho)}=a(d+k,n)^{(\rho)}+a(d-k,n)^{(\rho)}.
 $$
 Further, as before we have 
 $$
 Z(a(d+k,n)^{(\rho)}+a(d-k,n)^{(\rho)})^t=
 Z(a(n,d+k)^{(\rho)}+a(n,d-k)^{(\rho)}).
 $$
  This implies  
 $$
 (r_0(d,n)_{(k)}^{(\rho)})^t= a(n,d+k)^{(\rho)}+a(n,d-k)^{(\rho)}.
 $$
 Direct checking gives
 $$
 r_n(n,d)_{(k)}^{(\rho)}= a(n,d+k)^{(\rho)}+a(n,d-k)^{(\rho)}.
 $$
 The proof is now complete.
\end{proof}

\section{Lattice of subrepresentations - non-disjoint beginnings of segments}
\label{sec-der-lattice-non-disjoint-b}

The following proposition is proved in a similar way as  Proposition \ref{prop-lat-diff}. 
Therefore, we omit the proof here (the proof proceeds in two parts, following  claims (2) and (3) of Proposition \ref{prop-same}, similarly as the proof of Proposition \ref{prop-lat-diff}  followed  claims (2) and (3) of Proposition \ref{prop-diff}).

\begin{proposition}
\label{prop-not-diff-lat}
 Let $k\in\Z_{\geq 0}$.
 and
$
k<n.
$
Denote $\ell=\min(n,n+d-k)$.
Then ${\C R}(n,d)_{(k)}^{(\rho)}$ is a multiplicity one representation of length $\ell-n+k+1$, and
it is a representation with the minimal lattice of subrepresentations.  Further, for $n-k+1\leq i \leq \ell$,   holds
$$
J.H.\left(\left({\C R}(n,d)_{(k)}^{(\rho)}\right)_{\text{\rm smallest}}(Z(r_i(n,d)_{(k)}^{(\rho)}))\right)
=
\{Z(r_j(n,d)_{(k)}^{(\rho)}), i\leq j \leq \ell \}.
$$
Denote by $\C L_{\text{\rm proper}}$ the set of all non-zero proper subrepresentations of ${\C R}(n,d)_{(k)}^{(\rho)}$.  Then the mapping
 $$
J:V \mapsto \{i; Z(r_i(n,d)_{(k)}^{(\rho)})\leq V\},
$$
is a an isomorphism of the partially ordered sets
 $
\C L_{\text{\rm proper}}$  and 
$$
\{ \{\ell\}, \{\ell-1,\ell\},\dots\{n-k+1,\dots,\ell-1,\ell\}    \}.
$$
\qed
\end{proposition}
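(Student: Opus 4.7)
The strategy is to imitate the proof of Proposition \ref{prop-lat-diff} almost line for line. I would fix $n$ and $k$ (with $k<n$) and induct on $d$ in two stages, corresponding to the two cases in Proposition \ref{prop-same}. The key tools are unchanged: the identity $\text{h.d.}(\C R(n,d)_{(k)}^{(\rho)}) = \nu^{-1/2}\C R(n,d-1)_{(k)}^{(\rho)}$ for $d\geq 2$ together with formula \eqref{der-r} on each irreducible constituent; the exactness of the Bernstein--Zelevinsky functor $\Psi^-\circ(\Phi^-)^{p-1}$ realizing $\text{h.d.}$ at the level of representations, which by multiplicity one (Theorem \ref{th-not-diff}(1)) carries subrepresentations to subrepresentations injectively on the lattice; and Theorem \ref{th-not-diff}(3) identifying $Z(r_0)$ as the unique irreducible quotient and $Z(r_{\ell})$, with $\ell=\min(n,n+d-k)$, as the unique irreducible subrepresentation.

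For stage 1, when $d\leq k$, a direct exponent computation shows $\D_n\cap \G_{n+d-k}=\emptyset$, so formula \eqref{der-r} does not apply at index $n+d-k$; in fact $\text{h.d.}$ kills $Z(r_{n+d-k}(n,d)_{(k)}^{(\rho)})$, which by Theorem \ref{th-not-diff}(3) is the unique irreducible subrepresentation. Therefore $\text{h.d.}$ factors through an injection of lattices
$$
\varphi:\C R(n,d)_{(k)}^{(\rho)}/Z(r_{n+d-k}(n,d)_{(k)}^{(\rho)})\hookrightarrow \nu^{-1/2}\C R(n,d-1)_{(k)}^{(\rho)},
$$
and the inductive hypothesis together with uniqueness of the irreducible subrepresentation forces the full lattice to be minimal, exactly as in Proposition \ref{prop-lat-diff}. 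For stage 2, when $d>k$, the length stays at $k+1$ and every relevant $j$ satisfies $\D_n\cap \G_j\neq\emptyset$, so \eqref{der-r} gives a bijective match of irreducible constituents with those of $\nu^{-1/2}\C R(n,d-1)_{(k)}^{(\rho)}$; injectivity of $\text{h.d.}$ on the lattice together with the inductive hypothesis then yields minimality directly. The base cases in each stage (the smallest admissible $d$) are handled by inspection: either the representation is irreducible or it has length two, in which case minimality of the lattice is immediate from Theorem \ref{th-not-diff}(3).

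The description of $(\C R(n,d)_{(k)}^{(\rho)})_{\text{smallest}}(Z(r_i(n,d)_{(k)}^{(\rho)}))$ and the isomorphism of $\C L_{\text{proper}}$ with the claimed chain of index sets are then read off from the chain of subrepresentations, exactly as at the end of the proof of Proposition \ref{prop-lat-diff}. The main technical point, and the only real departure from the earlier argument, is the gap in the indexing set: because of the non-disjointness assumption $k<n$ the relevant indices are $\{0\}\cup\{n-k+1,\dots,\ell\}$ rather than $\{0,1,\dots,\ell\}$, and $Z(r_0(n,d)_{(k)}^{(\rho)})$ sits at the top as the unique irreducible quotient and appears in no proper subrepresentation. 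This is precisely why the statement is phrased in terms of $\C L_{\text{proper}}$ rather than the full lattice, and one must track this structural discontinuity carefully through both stages of the induction when bookkeeping which constituents are pulled into which subrepresentation.
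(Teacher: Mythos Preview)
Your proposal is correct and follows essentially the same approach as the paper: the paper explicitly omits the proof, saying only that it ``proceeds in two parts, following claims (2) and (3) of Proposition \ref{prop-same}, similarly as the proof of Proposition \ref{prop-lat-diff} followed claims (2) and (3) of Proposition \ref{prop-diff},'' which is precisely the two-stage induction on $d$ that you outline. Your identification of the base cases, the role of the vanishing of the highest derivative on $Z(r_{n+d-k}(n,d)_{(k)}^{(\rho)})$ in stage~1, and the bookkeeping of the index gap $\{0\}\cup\{n-k+1,\dots,\ell\}$ (and hence the passage to $\C L_{\text{proper}}$) are all in line with what the paper intends.
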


Since the lattice $\C L$ of all the non-zero  subrepresentations of ${\C R}(n,d)_{(k)}^{(\rho)}$ is an union of  $\C L_{\text{\rm proper}}$ and ${\C R}(n,d)_{(k)}^{(\rho)}$, the above description of $\C L_{\text{\rm proper}}$ completely describe the lattice $\C L$. 

\section{Translation to the setting of the Langlands classification}
\label{sec-conn-Z-L}

\subsection{Connection of Zelevinsky and Langlands classifications}
In the second section we have recalled  the definition of the Zelevinsky involution, and very basic properties of it (this   was enough for our work  on getting the composition series in terms of the Zelevinsky classification). To translate our main results from the Zelevinsky classification   to the   Langlands classifications,  we shall first recall how the Zelevinsky involution relates these two classifications (following F. Rodier's paper \cite{Ro}).

First recall that originally A.V. Zelevinsky has defined a ring homomorphism
 $^t:R \ra R$  determined by the requirement that    
 $$
 \z(\D)^t=
 \d(\D), \quad \forall \D \in S
 $$
  (recall that $R$ is a polynomial algebra over  $
\z(\D), \D \in \C S$).  He has also   shown that $^t$ is an involution. As we already noted, the fundamental result is that this involution is positive, i.e. that $r\geq 0$ implies $r^t\geq 0$ (this is proved in \cite{Au}, and in \cite{ScSt}). This implies that a restriction of $^t$ to the irreducible representations is a bijection. We  recall of the following simple result of F. Rodier (\cite{Ro}):

\begin{proposition} With $^t$ defined in a such way, 
for $a\in M(\SC)$ holds
$$
Z(a)^t=L(a) \ \ \text{and} \ \  L(a)^t=Z(a).
$$
\end{proposition}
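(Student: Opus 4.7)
The plan is to prove $Z(a)^t=L(a)$ by strong induction on the partial order $\leq$ on $M(\SC)$; the companion identity $L(a)^t=Z(a)$ then follows at once from $^t$ being an involution of $R$. The two structural ingredients I will rely on throughout are: (i) $^t$ is a positive ring homomorphism which is involutive, so it permutes the basis of $R$ consisting of classes of irreducible representations; and (ii) the Zelevinsky expansion
\[
\zeta(a)\ =\ \sum_{b\leq a}m(a,b)\,Z(b)\quad\text{in }R,\qquad m(a,b)\in\Z_{\geq0},\ m(a,a)\geq1,
\]
which is a direct consequence of \eqref{cs} together with the embedding $Z(a)\h\zeta(a)$.

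For the base case I would take $a$ minimal for $\leq$. This is equivalent to $a$ having no pair of linked segments, in which case both $\zeta(a)$ and $\lambda(a)$ are irreducible by the standard Zelevinsky irreducibility criterion. Hence $\zeta(a)=Z(a)$ and $\lambda(a)=L(a)$, and $^t$ being a ring homomorphism with $\zeta(\D)^t=\d(\D)$ gives
\[
Z(a)^t\ =\ \zeta(a)^t\ =\ \d(\D_1)\t\dots\t\d(\D_k)\ =\ \lambda(a)\ =\ L(a).
\]

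For the inductive step, fix $a$ and assume $Z(b)^t=L(b)$ for every $b<a$. Applying $^t$ to the Zelevinsky expansion and substituting the inductive hypothesis yields
\[
\lambda(a)\ =\ m(a,a)\,Z(a)^t\ +\ \sum_{b<a}m(a,b)\,L(b)\quad\text{in }R.
\]
Since $^t$ is a positive involution, $Z(a)^t$ is the class of an irreducible representation, so $Z(a)^t=L(c)$ for some $c$. Now I compare the multiplicity of $L(a)$ on both sides: on the left it is at least $1$ because $L(a)$ is the unique irreducible quotient of $\lambda(a)$; on the right it can only come from the term $m(a,a)\,L(c)$, since the $L(b)$ with $b<a$ are irreducibles distinct from $L(a)$. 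Therefore $L(a)$ must appear in $L(c)$, forcing $c=a$ and hence $Z(a)^t=L(a)$.

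The main obstacle is not technical but consists in making sure that all the black-box inputs invoked above are legitimately available at this point in the exposition: the positivity of $^t$ (\cite{Au}, \cite{ScSt}), its involutivity, the biorthogonality relation \eqref{cs}, and the Langlands existence of a unique irreducible quotient of $\lambda(a)$. Once these are granted, the proof becomes an entirely formal manipulation in $R$; in particular no knowledge of the Kazhdan--Lusztig-type expansion of $\lambda(a)$ in the basis $\{L(b)\}$ is needed, nor any finer information about the multiplicities $m(a,b)$ beyond the positivity $m(a,a)\geq1$.
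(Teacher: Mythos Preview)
Your proof is correct and follows essentially the same route as the paper's argument due to Rodier: induction on the partial order on $M(\SC)$, the base case handled by irreducibility of $\zeta(a)$ when no segments are linked, and the inductive step by applying $^t$ to the expansion $\zeta(a)=Z(a)+\sum_{a'<a}m_{a,a'}Z(a')$ (Theorem~7.1 of \cite{Z}) and then locating $L(a)$ on the right-hand side. The only cosmetic difference is that the paper uses the exact equality $m(a,a)=1$ from \cite{Z}, whereas you only invoke $m(a,a)\geq1$; your weaker hypothesis suffices, since once $Z(a)^t$ is known to be irreducible the identification $Z(a)^t=L(a)$ follows regardless.
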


This proposition implies that the Zelevinsky original definition of the involution agrees with the one that we have used (from the second section). For the convenience of the reader, we shall recall of a very simple argument of F. Rodier showing the above relation.

\begin{proof} Write $a=(\D_1,\dots,\D_n)$.
The proof goes by induction with respect to the standard ordering on $M(\SC)$. Suppose that $a$ is  minimal with respect to this 
ordering. Then $\z(\D_1)\t \dots \t \z(\D_n)$ is irreducible, and from the definition of $^t$ follows
$$
Z(a)^t=(\z(\D_1)\t \dots \t \z(\D_n))^t=\d(\D_1)\t \dots \t \d(\D_n).
$$
Since $L(a)$ is a subquotient of the right hand side, we get $Z(a)^t=L(a)$ in this case.

Suppose now that $a$ is arbitrary, and that the formula holds for all $a'<a$. By Theorem 7.1 of \cite{Z}, there exist positive integers $m_{a,a'}$ such that $\zeta( a)=Z(a)+\sum_{a'<a}m_{a,a'}Z(a')$. Now the inductive assumption implies
$$
\lambda(a)=\z( a)^t=Z(a)^t+\sum_{a'<a}m_{a,a'}
L(a').
$$
Since $L(a)\leq \lambda(a),$ we conclude $L(a)=Z(a)^t$. This proves the first relation in the proposition. The second relation follows immediately from the first one.
\end{proof}

\begin{corollary}
Let $P\in \Z[X_1,\dots,X_n]$ be a polynomial and $a_1,\dots,a_n\in M(\SC)$. Then in $R$ holds:
$$
P(Z(a_1),\dots,Z(a_n))=0 \iff P(L(a_1),\dots,L(a_n))=0.
$$

\end{corollary}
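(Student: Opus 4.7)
The plan is to deduce the corollary as an essentially formal consequence of two facts already available: first, that $^t:R\to R$ is a ring homomorphism (the positivity and multiplicativity was recalled in section 2.8, citing \cite{Au} and \cite{ScSt}); and second, the just-proved relation $Z(a)^t=L(a)$ from Rodier's proposition. The key observation is that any polynomial identity with integer coefficients is preserved by any ring homomorphism, and that $^t$ is moreover an involution, hence bijective on $R$.

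More precisely, I would argue as follows. For the forward implication, assume $P(Z(a_1),\dots,Z(a_n))=0$ in $R$. Applying $^t$ to both sides and using that $^t$ is a $\Z$-algebra homomorphism, we get
$$
0 = ^t\!\bigl(P(Z(a_1),\dots,Z(a_n))\bigr) = P(Z(a_1)^t,\dots,Z(a_n)^t) = P(L(a_1),\dots,L(a_n)),
$$
where the last equality uses $Z(a_i)^t=L(a_i)$ from the proposition. For the reverse implication, start from $P(L(a_1),\dots,L(a_n))=0$, apply $^t$ again, and now use that $^t$ is an involution together with the second relation $L(a_i)^t=Z(a_i)$ from the proposition to recover $P(Z(a_1),\dots,Z(a_n))=0$.

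There is no real obstacle here once the preceding proposition is in hand; the whole content lies in the fact that $^t$ is a ring homomorphism (the non-trivial input coming from \cite{Au}, \cite{ScSt}) and that its two descriptions of the effect on $Z(a)$ and $L(a)$ coincide. The only point worth checking carefully is that the argument really uses nothing more than $\Z$-linearity and multiplicativity of $^t$, so that arbitrary integer-coefficient polynomial expressions are transported correctly; this is automatic from the definition of a ring homomorphism applied term by term to $P$.
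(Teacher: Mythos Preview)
Your proof is correct and follows essentially the same approach as the paper: both apply the ring homomorphism $^t$ to the polynomial expression, use $Z(a_i)^t=L(a_i)$ from the preceding proposition, and rely on the bijectivity of the involution $^t$ to get the equivalence. The paper presents this as a single chain of ``if and only if'' statements after expanding $P$ into monomials, whereas you treat the two implications separately, but the content is identical.
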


\begin{proof}
Write 
$$
P=\sum_{i_1,\dots,i_n\geq 0}
c_{i_1,\dots,i_n}X_1^{i_1}\dots X_n^{i_n}.
$$
Recall that $^t$ is a bijection (since it is an involution). Thus
$$
P(Z(a_1),\dots,Z(a_n))=0\iff
P(Z(a_1),\dots,Z(a_n))^t=0
$$
$$
\iff
(\sum_{i_1,\dots,i_n\geq 0}
c_{i_1,\dots,i_n}Z(a_1)^{i_1}\dots Z(a_n)^{i_n})^t=0
$$
$$
\iff
\sum_{i_1,\dots,i_n\geq 0}
c_{i_1,\dots,i_n}(Z(a_1)^t)^{i_1}\dots (Z(a_n)^t)^{i_n}=0
$$
$$
\iff
\sum_{i_1,\dots,i_n\geq 0}
c_{i_1,\dots,i_n}L(a_1)^{i_1}\dots L(a_n)^{i_n}=0
\iff
 P(L(a_1),\dots,L(a_n))=0.
$$
This completes the proof.
\end{proof}

\begin{definition} We denote
$$
\mathbf R^{\mathbf t}(n,d)_{(l)}^{(\rho)}:=\nu^{-l/2}L(a(n,d)^{(\rho)})\times \nu^{l/2}L(a(n,d)^{(\rho)})
$$
$$
\hskip19mm =L(a(n,d)^{(\nu^{-l/2}\rho)})\times L(a(n,d)^{(\nu^{l/2}\rho)}),
$$
where $n,d \in\mathbb Z_{\geq1}$, $l\in\Z$ and $\rho\in\C C$. 
\end{definition}

Observe that
 \eqref{t} implies
\begin{equation}
\label{connection}
\mathbf R^{\mathbf t}(n,d)_{(l)}^{(\rho)}= Z(a(d,n)^{(\nu^{-l/2}\rho)})\times Z(a(d,n)^{(\nu^{l/2}\rho)})
=
{\C R}(d,n)_{(l)}^{(\rho)}.
\end{equation}

Now from Theorem \ref{th-diff} easily follows the following completely analogous theorem in the setting of the Langlands classification\footnote{The only significant difference in the formulation of the theorem below is the description of  the parameters of the unique irreducible subrepresentation and the irreducible quotient.}:

\begin{theorem}
\label{th-diff-L}
 Let $k\in\Z_{\geq 0}$. 
\begin{enumerate}

\item The representations   $\mathbf R^{\mathbf t}(n,d)_{(k)}^{(\rho)}$ and   $\mathbf R^{\mathbf t}(n,d)_{(-k)}^{(\rho)}$ have the same composition series. They are multiplicity one representations.
\item
For 
$$
 n+d\leq k
 ,
 $$
   $\mathbf R^{\mathbf t}(n,d)_{(k)}^{(\rho)}$ is irreducible, and $\mathbf R^{\mathbf t}(n,d)_{(k)}^{(\rho)}\cong L(r_0(n,d)_{(k)}^{(\rho)})$.

\item
 For 
 $$
n\leq k\leq n+d,
$$
 $\mathbf R^{\mathbf t}(n,d)_{(k)}^{(\rho)}$ is a multiplicity one representation. Its composition series consists of
$$
L(r_i(n,d)_{(k)}^{(\rho)}), \quad 0\leq i \leq \min(n,n+d-k). 
$$

\item The representation $\mathbf R^{\mathbf t}(n,d)_{(k)}^{(\rho)}$ has a unique irreducible subrepresentation as well as a unique irreducible quotient. The irreducible subrepresentation  is isomorphic to $L(r_0(n,d)_{(k)}^{(\rho)})$, and the irreducible quotient is isomorphic to $L(r_{\min(n,n+d-k)}(n,d)_{(k)}^{(\rho)}).$ 

For $\mathbf R^{\mathbf t}(n,d)_{(-k)}^{(\rho)}$, we have opposite situation regarding irreducible subrepresentation and quotient.
\end{enumerate}
\end{theorem}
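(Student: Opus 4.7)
The plan is to translate Theorem \ref{th-diff} to the Langlands classification using the two identities developed earlier in the paper: \eqref{connection} gives $\mathbf R^{\mathbf t}(n,d)_{(k)}^{(\rho)} = \mathcal R(d,n)_{(k)}^{(\rho)}$ as literal parabolically induced representations, while \eqref{invo} gives $\mathcal R(d,n)_{(k)}^{(\rho)} = \big(\mathcal R(n,d)_{(k)}^{(\rho)}\big)^t$ in the Grothendieck group $R$. Together they say that $\mathbf R^{\mathbf t}(n,d)_{(k)}^{(\rho)}$ is the image under the Zelevinsky involution $^t$ of $\mathcal R(n,d)_{(k)}^{(\rho)}$ in $R$. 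I would then split the argument into two pieces: the composition-series claims (1)--(3), which respect $R$-level equalities and so can be read off by applying $^t$ to Theorem \ref{th-diff}; and the sub/quotient claim (4), which is about extension structure and must be extracted directly from Proposition \ref{prop-sub-quo} applied to $\mathcal R(d,n)_{(k)}^{(\rho)}$.

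For parts (1)--(3) the key tool is Rodier's identity $Z(a)^t = L(a)$ recalled in Subsection \ref{sec-conn-Z-L}. Since $^t: R \to R$ is an additive ring homomorphism whose restriction to $\mathrm{Irr}$ is a bijection, the composition series of $\mathcal R(n,d)_{(k)}^{(\rho)}$ from Theorem \ref{th-diff} transfers termwise: each Zelevinsky subquotient $Z(r_i(n,d)_{(k)}^{(\rho)})$ maps to $L(r_i(n,d)_{(k)}^{(\rho)})$, with multiplicity one and irreducibility preserved, and part (1) follows from Theorem \ref{th-diff}(1). The ranges $n+d\leq k$ for (2) and $n\leq k\leq n+d$ for (3) are exactly the hypotheses of Theorem \ref{th-diff} on $\mathcal R(n,d)_{(k)}^{(\rho)}$, so they transfer unchanged.

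For (4) I would apply Proposition \ref{prop-sub-quo} with the roles of $n$ and $d$ interchanged, viewing $\mathcal R(d,n)_{(k)}^{(\rho)}$ as the literal representation $\mathbf R^{\mathbf t}(n,d)_{(k)}^{(\rho)}$. The unique irreducible subrepresentation is
$$
Z\big((a(n,d)^{(\nu^{-k/2}\rho)} + a(n,d)^{(\nu^{k/2}\rho)})^t\big) = L(r_0(n,d)_{(k)}^{(\rho)}),
$$
using $Z(a^t) = L(a)$ and the definition of $r_0$. The unique irreducible quotient is $Z(r_0(d,n)_{(k)}^{(\rho)})$, and identifying this with $L(r_{\min(n,\,n+d-k)}(n,d)_{(k)}^{(\rho)})$ reduces, via $Z(a) = L(a^t)$, to the multisegment identity
$$
(r_0(d,n)_{(k)}^{(\rho)})^t = r_{\min(n,\,n+d-k)}(n,d)_{(k)}^{(\rho)},
$$
which is precisely what is already verified inside the proof of Theorem \ref{th-diff}(4): Lemma \ref{le-crucial}(3)(g) handles the range $d\leq k\leq n+d-1$, while the unitary-induction computation using $r_0(d,n)_{(k)}^{(\rho)} = a(d+k,n)^{(\rho)} + a(d-k,n)^{(\rho)}$ handles $k\leq d$. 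The statement about $\mathbf R^{\mathbf t}(n,d)_{(-k)}^{(\rho)}$ (opposite positions of sub and quotient) comes from the corresponding opposite statement in Proposition \ref{prop-sub-quo}.

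The only conceptual point to watch, rather than an obstacle, is that $\mathbf R^{\mathbf t}(n,d)_{(k)}^{(\rho)} = (\mathcal R(n,d)_{(k)}^{(\rho)})^t$ is an equality in $R$ only (so it determines composition factors with multiplicity but does not transport extension structure); this is precisely why part (4) must be extracted via Proposition \ref{prop-sub-quo} and not by naively applying $^t$ to Theorem \ref{th-diff}(4). Beyond this bookkeeping the argument is a compact reassembly of results already established earlier in the paper, with no new analytic input required.
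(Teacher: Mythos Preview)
Your approach is correct and essentially the same as the paper's. Both arguments transfer Theorem \ref{th-diff} to the Langlands side via the identity $\mathbf R^{\mathbf t}(n,d)_{(k)}^{(\rho)}=\mathcal R(d,n)_{(k)}^{(\rho)}$ from \eqref{connection} together with the Zelevinsky involution; the paper packages the transfer of composition series through the polynomial corollary $P(Z(a_i))=0\iff P(L(a_i))=0$, which is precisely your direct use of $Z(a)^t=L(a)$ on each factor, and for part (4) both arguments read the sub/quotient off $\mathcal R(d,n)_{(k)}^{(\rho)}$ at the representation level (you via Proposition \ref{prop-sub-quo}, the paper via Theorem \ref{th-diff}(4), which itself rests on that proposition) and then invoke the multisegment identity $(r_0(d,n)_{(k)}^{(\rho)})^t=r_{\min(n,n+d-k)}(n,d)_{(k)}^{(\rho)}$ established in the proof of Theorem \ref{th-diff}(4).
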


\begin{proof} Theorem \ref{th-diff} and \eqref{connection} directly imply (1).

To prove (2)\footnote{We could  prove this statement easily using the principal properties of the Langlands classification. Rather, we present here  the proof which follows general principle how one lifts a result from one classification to the other classification.}, let $P$ be the polynomial $X_2X_3-X_1$ and  take 
$$
X_1=r_0(n,d)_{(k)}^{(\rho)},
\qquad
X_2=a(n,d)^{(\nu^{-l/2}\rho)}, 
\qquad
X_3=a(n,d)^{(\nu^{l/2}\rho)}.
$$ 
Now (2) of Theorem \eqref{th-diff} implies 
$$
P(Z(r_0(n,d)_{(k)}^{(\rho)}), Z(a(n,d)^{(\nu^{-l/2}\rho)}), Z(a(n,d)^{(\nu^{l/2}\rho)}))=0.
$$
The above corollary implies 
$$
P(L(r_0(n,d)_{(k)}^{(\rho)}), L(a(n,d)^{(\nu^{-l/2}\rho)}), L(a(n,d)^{(\nu^{l/2}\rho)}))=0,
$$
i.e.
$L(r_0(n,d)_{(k)}^{(\rho)})= L(a(n,d)^{(\nu^{-l/2}\rho)})\t L(a(n,d)^{(\nu^{l/2}\rho)})=\mathbf R^{\mathbf t}(n,d)_{(k)}^{(\rho)}$, which is the claim of (2)

We shall now comment the proof of (3). Denote $\ell=\min(n,n+d-k)$.
Take now 
$$
P=X_{\ell+1}X_{\ell+2}-X_0- \dots -X_\ell,
$$
$$
X_i=r_i(n,d)_{(k)}^{(\rho)}, \quad 0\leq i\leq \ell,
$$
$$
X_{\ell+1}=a(n,d)^{(\nu^{-l/2}\rho)}, 
\qquad
X_{\ell+2}=a(n,d)^{(\nu^{l/2}\rho)}.
$$ 
Now (3) of Theorem \ref{th-diff} implies 
$$
P(Z(r_0(n,d)_{(k)}^{(\rho)}), \dots ,Z(r_\ell(n,d)_{(k)}^{(\rho)}), Z(a(n,d)^{(\nu^{-l/2}\rho)}), Z(a(n,d)^{(\nu^{l/2}\rho)}))=0.
$$
The above corollary implies
$$
P(L(r_0(n,d)_{(k)}^{(\rho)}), \dots ,L(r_\ell(n,d)_{(k)}^{(\rho)}), L(a(n,d)^{(\nu^{-l/2}\rho)}), L(a(n,d)^{(\nu^{l/2}\rho)}))=0,
$$
which tells that that in $R$ we have
$$
L(r_0(n,d)_{(k)}^{(\rho)})+ \dots +L(r_\ell(n,d)_{(k)}^{(\rho)})= L(a(n,d)^{(\nu^{-l/2}\rho)})\t L(a(n,d)^{(\nu^{l/2}\rho)}))=\mathbf R^{\mathbf t}(n,d)_{(k)}^{(\rho)}.
$$
This  is the claim of (3).

Recall $\mathbf R^{\mathbf t}(n,d)_{(l)}^{(\rho)}= {\C R}(d,n)_{(l)}^{(\rho)}.$ Therefore, $\mathbf R^{\mathbf t}(n,d)_{(k)}^{(\rho)}$ has a unique irreducible quotient and  a unique irreducible  subrepresentation. They are   
$Z(r_0(d,n)_{(k)}^{(\rho)})$ and  $Z(r_{\min(n,n+d-k)}(d,n)_{(k)}^{(\rho)})$ respectively, i.e. $L((r_0(d,n)_{(k)}^{(\rho)})^t)$ and  $L((r_{\min(n,n+d-k)}(d,n)_{(k)}^{(\rho)})^t)$ respectively. We have seen that they are respectively
 $L(r_{\min(n,n+d-k)}(n,d)_{(k)}^{(\rho)})$ and $L(r_0(n,d)_{(k)}^{(\rho)})$. The proof is now complete.
\end{proof}

In the completely the same way we prove Theorem \ref{th-not-diff} in the setting of the Langlands classification:

\begin{theorem}
\label{th-not-diff-L}
 Let $k\in\Z_{\geq 0}$.
 
 \begin{enumerate}

\item Representations   $\mathbf R^{\mathbf t}(n,d)_{(k)}^{(\rho)}$ and   $\mathbf R^{\mathbf t}(n,d)_{(-k)}^{(\rho)}$ have the same composition series, and they are multiplicity one representations.
\item
 For
$$
k<n,
$$
the composition series of  $\mathbf R^{\mathbf t}(n,d)_{(k)}^{(\rho)}$ are given by
$$
L(r_i(n,d)_{(k)}^{(\rho)}), \quad n-k+1\leq i \leq \min(n-k+d,n),
$$
together with $L(r_{0}(n,d)_{(k)}^{(\rho)})$.
\item The representation $\mathbf R^{\mathbf t}(n,d)_{(k)}^{(\rho)}$ has a unique irreducible subrepresentation  and also a unique irreducible quotient. The irreducible subrepresentation  is isomorphic to $L(r_{0}(n,d)_{(k)}^{(\rho)})$. The irreducible quotient  is isomorphic to $L(r_{\min(n-k+d,n)}(n,d)_{(k)}^{(\rho)}).$ 

For $\mathbf R^{\mathbf t}(n,d)_{(-k)}^{(\rho)}$, we have opposite situation regarding irreducible subrepresentation and quotient.
\qed
\end{enumerate}
\end{theorem}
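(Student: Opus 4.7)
The plan is to mirror verbatim the proof of Theorem~\ref{th-diff-L} given just above, transferring the Zelevinsky-side result Theorem~\ref{th-not-diff} to the Langlands side by two tools: the polynomial-identity corollary just proved (which lifts any identity of $Z(a)$'s in $R$ to the corresponding identity of $L(a)$'s), and the identification $\mathbf R^{\mathbf t}(n,d)_{(l)}^{(\rho)}=\mathcal R(d,n)_{(l)}^{(\rho)}$ recorded in \eqref{connection}.

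For (1), the equality $\mathrm{s.s.}(\mathcal R(n,d)_{(k)}^{(\rho)})=\mathrm{s.s.}(\mathcal R(n,d)_{(-k)}^{(\rho)})$ is a polynomial identity in the $Z$'s (by Theorems \ref{th-diff}(1) and \ref{th-not-diff}(1)), so the corollary produces the analogous identity in the $L$'s; via \eqref{connection} this is precisely the asserted equality of composition series, and the multiplicity-one property transfers in exactly the same fashion. For (2), following the model of the proof of Theorem~\ref{th-diff-L}(3), I would take
$$
P=X_{m+1}X_{m+2}-X_{0}-X_{n-k+1}-\dots-X_{\min(n-k+d,n)},
$$
with $X_{i}=r_{i}(n,d)_{(k)}^{(\rho)}$ for $i\in\{0\}\cup\{n-k+1,\dots,\min(n-k+d,n)\}$ and $X_{m+1}=a(n,d)^{(\nu^{-k/2}\rho)}$, $X_{m+2}=a(n,d)^{(\nu^{k/2}\rho)}$. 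Theorem~\ref{th-not-diff}(2) gives $P(Z(X_{\bullet}))=0$ in $R$; the corollary then gives $P(L(X_{\bullet}))=0$, which is the claimed composition series for $\mathbf R^{\mathbf t}(n,d)_{(k)}^{(\rho)}$.

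For (3), I would not use the polynomial corollary (which loses the actual module structure) but rather exploit that \eqref{connection} is an equality of genuine representations, and apply Proposition~\ref{prop-sub-quo} to $\mathcal R(d,n)_{(k)}^{(\rho)}=\mathbf R^{\mathbf t}(n,d)_{(k)}^{(\rho)}$ (i.e.\ with the roles of $n$ and $d$ swapped). That proposition identifies the unique irreducible subrepresentation as
$Z((a(n,d)^{(\nu^{-k/2}\rho)}+a(n,d)^{(\nu^{k/2}\rho)})^{t})=L(r_{0}(n,d)_{(k)}^{(\rho)})$, matching the statement, and the unique irreducible quotient as
$Z(a(d,n)^{(\nu^{-k/2}\rho)}+a(d,n)^{(\nu^{k/2}\rho)})=L\bigl((a(d,n)^{(\nu^{-k/2}\rho)}+a(d,n)^{(\nu^{k/2}\rho)})^{t}\bigr)$. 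The Hermitian-contragredient symmetry between $\mathbf R^{\mathbf t}(n,d)_{(k)}^{(\rho)}$ and $\mathbf R^{\mathbf t}(n,d)_{(-k)}^{(\rho)}$ swaps the two, exactly as at the end of the proofs of Theorems~\ref{th-diff} and \ref{th-not-diff}.

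The one place where nontrivial work remains is the combinatorial identification
$$
(a(d,n)^{(\nu^{-k/2}\rho)}+a(d,n)^{(\nu^{k/2}\rho)})^{t}=r_{\min(n-k+d,n)}(n,d)_{(k)}^{(\rho)},
$$
which is exactly what turns the abstract quotient above into the advertised $L(r_{\min(n-k+d,n)}(n,d)_{(k)}^{(\rho)})$. I expect this to be the main (and only) obstacle; I would dispatch it in two subcases mirroring the split already used in the proof of Theorem~\ref{th-diff-L}(4). When $k<d$ (hence $\min(n-k+d,n)=n$), the same unitary-parabolic-induction trick used there applies verbatim: $(a(d,n)^{(\rho)})^{t}=a(n,d)^{(\rho)}$, $r_{0}(d,n)_{(k)}^{(\rho)}=a(d+k,n)^{(\rho)}+a(d-k,n)^{(\rho)}$, and irreducibility of $L(a(n,d+k)^{(\rho)})\times L(a(n,d-k)^{(\rho)})$ forces $(r_{0}(d,n)_{(k)}^{(\rho)})^{t}=a(n,d+k)^{(\rho)}+a(n,d-k)^{(\rho)}=r_{n}(n,d)_{(k)}^{(\rho)}$. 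When $d\leq k<n$, so $\min(n-k+d,n)=n-k+d<n$, we are in the disjoint-beginnings regime for $\mathcal R(d,n)_{(k)}^{(\rho)}$ and the identity is precisely the one already established in the proof of Theorem~\ref{th-diff-L}(4) with $(n,d)$ interchanged; alternatively, a direct application of $\mathrm{MWA}^{\leftarrow}$ to $a(d,n)^{(\nu^{-k/2}\rho)}+a(d,n)^{(\nu^{k/2}\rho)}$ (strictly parallel to the diagrammatic computation carried out around \eqref{eq-ex-4}--\eqref{eq-ex-6}) reads off the desired multisegment $r_{n-k+d}(n,d)_{(k)}^{(\rho)}$.
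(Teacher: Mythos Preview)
Your proposal is correct and follows essentially the same approach as the paper, which gives no explicit proof and simply states that Theorem~\ref{th-not-diff-L} is proved ``in the completely the same way'' as Theorem~\ref{th-diff-L}. You have correctly filled in the details: the polynomial-identity corollary transfers (1) and (2) from Theorem~\ref{th-not-diff}, and the combinatorial identity $(r_0(d,n)_{(k)}^{(\rho)})^t=r_{\min(n-k+d,n)}(n,d)_{(k)}^{(\rho)}$ needed for (3) is already established in the paper---in the proof of Theorem~\ref{th-not-diff}(3) for $k<d$, and in the proof of Proposition~\ref{prop-same}(2) (the diagrams \eqref{eq-ex-4}--\eqref{eq-ex-6}) for $d\leq k<n$---so your case split and references are on target.
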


\subsection{Expectation}
A following natural question after this paper (and Theorem \ref{th-i}), is the question of composition series when in the case of a product of two arbitrary essentially Speh representations\footnote{The case of the "first" reducibility point may be interesting for the problem of unitarizability for classical groups. Such  reducibility is usually not complicated (with the representation of length two).}. We expect the multiplicity one to hold also here. Also, we expect the description of composition factors that we give in this paper to  hold  there in the essentially same form. More precisely, let $\pi_1=L(\D_1,\dots,\D_n)$ and $\pi_2=L(\G_1,\dots,\G_m)$ be two essentially Speh representations (segments $\D_1$ and $\G_1$ do not need to be of the same length anymore). Choose  numerations of segments which satisfy $\D_1\ra\D_2\ra\dots\ra \D_n$ and $\G_1\ra\G_2\ra\dots\ra \G_m$. Suppose that $\pi_1\t\pi_2$ reduces (there is a simple criterion describing the reducibility in \cite{T-GL-red}). Choose a numeration of $\pi_1$ and $\pi_2$ such that $\nu^\a b(\D_1)=b(\G_1)$ for some $\a\geq 0$ (actually, then $\a$ is a positive integer). Denote by $I_{\pi_1,\pi_2}$ the set of all indexes $j\in \{1,2,\dots,m\}$ for which hold $\D_n\ra \G_j$ and $1\leq n-j-1$\footnote{This condition is automatically satisfied if $\pi_1$ is a twist of $\pi_2$, which is the case for the representations  that we study in this paper. }. Let 
$
a_{\pi_1,\pi_2}^{(0)}=(\D_{1},\dots,\D_n,\G_1,\dots,\G_m)$. For
 each $ j \in I_{\pi_1,\pi_2}$ denote by $a_{\pi_1,\pi_2}^{(j)}$ a multisegment which we obtain when we replace in $
a_{\pi_1,\pi_2}^{(0)}
$ 
the part
$
\D_{n-j+1},\dots,
\D_n,\G_1,
\dots,\G_j
$
with
$$
\D_{n-j+1}\cup \G_1,\D_{n-j+1}\cap \G_1,
\dots\dots,
\D_{n}\cup\G_j,\D_n\cap\G_j.
$$
Then we expect  $J.H.(\pi_1\t\pi_2)=\{L(a_{\pi_1,\pi_2}^{(j)}); j\in I_{\pi_1,\pi_2}\cup\{0\}\}$.

Regarding the proofs, we expect that the same strategy can be used, except that we can not use the "symmetry", which is a topic of Lemma \ref{le-sym}. This symmetry is used to transfer an information that we get on ends of segments defining irreducible subquotients  using the derivatives of essentially Speh representations, to get the corresponding  information about beginnings of these segments. In the general case, there is no such a symmetry. Instead, we expect that the derivatives considered in  Remark
\ref{rm-der-dual} will provide with the corresponding information (this is equivalent to  passing to the contragredient setting).


\begin{thebibliography}{99}



\bibitem{Au}
 Aubert, A. M., {\it Dualit\'{e} dans le groupe de Grothendieck de la cat\'{e}gorie
des repr\'{e}sentations lisses de longueur finie d'un groupe r\'{e}ductif $p$-
adique}, Trans. Amer. Math. Soc.  347 (1995), 2179-2189;  {\it Erratum},
Trans. Amer. Math. Soc  348 (1996),  4687-4690


 
 \bibitem{Ba-Sp}
 Badulescu, A.~I., {\it On p-adic Speh representations}, Bulletin de la SMF, to appear.
 
%

%









\bibitem{Be-P-inv} 
Bernstein, J., {\em P-invariant  distributions on $GL(N)$ and the
classification of unitary representations of $GL(N)$
(non-archimedean case),} Lie Group
Representations II, Lecture Notes
in Math. {1041}, Springer-Verlag,   Berlin, 1984, 50-102.

\bibitem{BZ}
 Bernstein, I. N. and Zelevinsky, A. V., {\it Induced representations of reductive $p$-adic groups. I}, Ann. Sci. 
 \'Ecole Norm. Sup. (4) 10 (1977), no. 4, 441-472.

\bibitem{BK1}
 Bushnell, C. J. and Kutzko, P. C., {\it The admissible dual of GL(N) via compact open subgroups}, Annals of Mathematics Studies 129, Princeton University Press, Princeton, NJ, 1993.


 
 










 \bibitem{GK}
Gelfand, I. M. and Kajdan, {\it D. A. Representations of the group $GL(n,K)$ where $K$ is a local field}, Lie groups and their representations (Proc. Summer School, Bolyai J\'anos Math. Soc., Budapest, 1971),  95-118. Halsted, New York, 1975.


 \bibitem{LMi} Lapid, E. and M\'inguez, A.,
{\it On a determinantal formula of Tadi\'c}, 
 Amer. J. Math. 136 (2014), no. 1, 111-142.

%
%
%

 \bibitem{Le} Leclerc, B. A, {\it Littlewood-Richardson rule for evaluation representations of $U_q(\hat{\mathfrak s \mathfrak l}_n)$}, S\'em. Lothar. Combin. 50 (2003/04), Art. B50e, 12 pp. (electronic).





%

 \bibitem{MoeW-alg}
 M{\oe}glin, C. and Waldspurger, J.-L., {\it Sur l'involution de Zelevinski}, J.  Reine Angew. Math. 372 (1986), 136-177. 







%


 \bibitem{Ro}
Rodier, F.
{\it  Repr\'esentations de $GL(n,k)$ o\`u $k$ est un
corps $p$-adique},
 S\'eminaire Bourbaki no 587 (1982), Ast\'erisque
 92-93 (1982),
 201-218.


 \bibitem{ScSt}  Schneider, P. and  Stuhler, U., 
{\it Representation theory and sheaves on the Bruhat-Tits building},
Publ. Math. IHES 85
(1997),
 97-191.
 
%
%
%
%
 

\bibitem{T-R-C-old}
Tadi\'c, M., {\em
Unitary  representations of general linear group over  real and complex field, }
preprint MPI/SFB 85-22 Bonn (1985)  (\url{http://www.mpim-bonn.mpg.de/preblob/5395}).

\bibitem{T-AENS}
Tadi\'c, M., {\em Classification of unitary representations in irreducible
representations of general linear group $($non-archimedean case$)$}, Ann. Sci.
Ecole Norm. Sup. { 19} (1986), 335-382.



\bibitem{T-der-ber} Tadi\'c, M., {\em Unitary representations of $GL(n)$, derivatives in the non-archimedean case}, Berichte der Mathematisch-Statistischen Sektion in der Forschungsgesellschaft Joaneum, Graz, vol. 281, 1987,  281/1-281/19.

 
 
 \bibitem{T-ch} 
Tadi\'c, M., {\em
 On characters of irreducible unitary representations of general linear groups}, Abh.
Math. Sem. Univ. Hamburg 65 (1995),  341-363.


%





 



\bibitem{T-GL-red}
Tadi\'c, M., {\em  Irreducibility criterion for representations induced by essentially unitary ones $($case of non-archimedean $GL(n,A)$$)$}, Glasnik Mat. 49(69) (2014), 123-161.  


\bibitem{Z} Zelevinsky, A. V.,
{\it  Induced representations of reductive p-adic groups II. On
irreducible representations of GL(n)},
    Ann. Sci. \'{E}cole Norm. Sup.
  13 (1980),
 165-210.

\end{thebibliography}
\end{document}